
\documentclass[preprint,15pt]{elsarticle}

\topmargin 10pt
\oddsidemargin 10pt
\evensidemargin 10pt
\textheight 21cm
\textwidth 17cm




\usepackage{amssymb}
\usepackage{amsmath}
\usepackage{amsthm}
\usepackage{amssymb,latexsym}
\usepackage{mathrsfs}
\usepackage{enumerate}
\usepackage{indentfirst}
\usepackage{color}
\usepackage{graphicx}
\usepackage{subfigure}



\newtheorem{definition}{Definition}
\newtheorem{example}{Example}
\newtheorem{theorem}{Theorem}
\newtheorem{lemma}{Lemma}
\newdefinition{remark}{Remark}
\newtheorem{corollary}{Corollary}


\journal{SIAM Journal on Matrix Analysis and Applications}

\begin{document}
\begin{frontmatter}


\title{The Role of Tensor-Generated Matrices in Analyzing Spin State Classicality and Tensor H-Eigenvalue Distributions }

\author{Liang Xiong$^{1,}$\corref{mycorrespondingauthor}}
\ead{liang.xiong@polyu.edu.hk}

\author{ Jianzhou Liu$^{2,}$\corref{mycorrespondingauthor}}
\cortext[mycorrespondingauthor]{Corresponding author}
\ead{liujz@xtu.edu.cn}

\address{$^{1}$ Department of Applied Mathematics, The Hong Kong Polytechnic University, Hung Hom, Hong Kong, China\\
$^{2}$School of Mathematics and Computational Science, Xiangtan University, Xiangtan, Hunan 411105, China\\}

\begin{abstract}
Multipartite quantum scenarios are a significant and challenging resource in quantum information science. Tensors provide a powerful framework for representing multipartite quantum systems. In this work, we introduce the role of tensor-generated matrices that can broadly be defined as the relationships between an $m$-th order $n$-dimensional tensor and an $n$-dimensional square matrix. Through these established connections, we demonstrate that the classification of the tensor-generated matrix as an $H$-matrix implies the original tensor is also an $H$-tensor.  We also explore various similar properties exhibited by both the original tensor and the tensor-generated matrix, including weak irreducibility, weakly chained diagonal dominance, and (strong) symmetry. These findings provide a method to transform intricate tensor problems into matrices in specific contexts, which is especially pertinent due to the NP-hard complexity of the majority of tensor problems. Subsequently, we explore the application of tensor-generated matrices in analyzing the classicality of spin states. Leveraging the tensor representation, we introduce classicality criteria for (strongly) symmetric spin-$j$ states, which potentially provide fresh perspectives on the study of multipartite quantum resources. Finally, we extend classical matrix eigenvalue inclusion sets to higher-order tensor $H$-eigenvalues, a task that is typically challenging for higher-order tensors. Consequently, we propose representative tensor $H$-eigenvalue inclusion sets, such as modified Brauer's Ovals of Cassini sets, Ostrowski sets, and $S$-type inclusion sets.
\end{abstract}

\begin{keyword}
$H$-tensor, quantum information, spin coherent state, Classicality (Separability), $H$-eigenvalue

\end{keyword}

\end{frontmatter}

\section{Introduction}

Tensors, also known as multidimensional arrays, and their eigenvalues have become increasingly important in the fields of applied mathematics and computational mathematics, fostering the development of numerical multilinear algebra. Moreover, tensors find wide-ranging applications in various domains, including liquid crystals \cite{CYN2017L1,liquid2,liquid3,East}, higher-order Markov chains \cite{Markov2}, best-rank one approximation in data analysis \cite{data1,data2,KE2016}, quantum information \cite{GME2003,hu2016,quantuminformation,COAM,ACAP}, and the positive definiteness of even-order multivariate forms in automatic control \cite{Niq2008,JMI}.

With the progress in technology and the evolution of quantum platforms, many platforms are now capable of concurrently generating and manipulating multipartite qubits. It has become evident in recent years that high-dimensional systems offer advantages in specialized tasks such as quantum communication \cite{PhysRevLett.88.127902,PhysRevA.82.030301} and quantum computing \cite{PhysRevLett.83.5162,RevModPhys.80.1061}. Consequently, an increasing amount of multi-order high-dimensional tensor data will manifest in multipartite quantum systems.

Tensors provide a robust framework for representing complex quantum data structures \cite{chen2010tensor,chitambar2008tripartite,bruzda2024rank,xiong2022cauchy}. Analyzing the eigenvalues of these tensors uncovers crucial insights into quantum system properties and behaviors \cite{GME2003,hu2016}. This exploration is essential for understanding multipartite quantum states \cite{xiong2022geometric}, entanglement phenomena, and optimizing quantum computational tasks \cite{PRXQuantum.2.030305}. By deciphering the properties of tensors \cite{2014T,PhysRevA.94.042324}, we can reveal intricate quantum correlations, leading to advancements in quantum algorithms and communication protocols. This research not only enriches the theoretical foundations of quantum information science but also drives innovations in quantum technologies, enabling more efficient information processing and communication capabilities in various practical applications.

The coherent superposition of states \cite{RevModPhys.89.041003}, along with observable quantization, stands out as a key characteristic signifying the distinction of quantum mechanics from classical physics. Quantum coherence within many-body systems encapsulates the core nature of entanglement and serves as a crucial element for numerous physical phenomena in quantum optics \cite{optics,optics2}, quantum information \cite{RevModPhys.89.041003}, solid-state physics \cite{gao2015coherent}, and nanoscale thermodynamics \cite{PhysRevLett.111.250404}. Recently, there has been a growing interest in studying the existence and functional significance of quantum coherence \cite{PhysRevLett.113.140401,PhysRevLett.116.150502,PhysRevA.98.022328}. Despite its fundamental significance, the establishment of a comprehensive theory regarding quantum coherence as a physical asset has only commenced in recent times.

Spin coherent states are the ones that come as close as possible to the ideal of a classical phase-space point, in the sense that their quantum fluctuations for the angular
moment components are as small as allowed by Heisenbergs uncertainty relation \cite{coherent}, which is widely used in quantum optics \cite{optics,optics2} and other fields \cite{1986Generalized}. The spin-$j$ states is classical \cite{Classicality2008,PhysRevA.96.032312} if its density matrix can be decomposed as a weighted sum of angular momentum coherent states with positive weights, that is,
the set of classical spin states are defined as the convex hull of spin coherent states.
Quantum states that lie beyond the domain of classical spin states are regarded as nonclassical (truly quantum). The quantumness \cite{quantumness2010,quantumness2016} of a spin-$j$ state, used to quantify the separation between a quantum state and a classical one, can be gauged by its distance from the classical states. Notably, employing various distance metrics will yield different measures of quantumness, including the Hilbert-Schmidt distance \cite{quantumness2010}, Bures distance \cite{2010H}, and trace distance \cite{Eisert_2003}.


Despite tensors' superior descriptive capacity for multi-order high-dimensional data compared to matrices, a notable challenge arises from the prevalence of NP-hard problems in tensor analysis \cite{hillar2013most}. In contrast to matrix eigenvalue computations, determining eigenvalues and confirming positive semidefiniteness in higher-order tensors is acknowledged to be NP-hard. Despite ongoing efforts, various algorithms have emerged for calculating tensor eigenvalues, as detailed in \cite{Power2011,Power2014,Cui2014,Homotopy1,Homotopy2}. Regrettably, these methods exhibit limitations in handling larger tensors.

Although the Ger$\check{s}$gorin disk theorem can be directly extended to higher-order tensors, classical matrix eigenvalue inclusion sets such as Brauer's Ovals of Cassini sets, Ostrowski sets, and $S$-type inclusion sets cannot be directly extended to higher-order tensors. However, these classical matrix eigenvalue inclusion sets often capture all eigenvalues more accurately than Ger$\check{s}$gorin disk sets. So far, most tensor eigenvalue inclusion sets have focused on modified Brauer-type sets, as Brauer's Ovals of Cassini sets \cite{BrauerA} cannot be directly extended to tensor eigenvalues. Furthermore, matrices Ostrowski sets \cite{OstrowskiI}, which depend not only on deleted row sums $r_{i}(A)$ but also on deleted column sums $c_{i}(A)$, are generally considered unsuitable for higher-order tensors with multiple indices. Clearly, compared to the matrix eigenvalue localization theory, the theory of higher-order tensor eigenvalue localization is still in its early stages and requires further development.


In this paper, we introduce the role of tensor-generated matrices and their applications in analyzing spin state classicality and tensor $H$-Eigenvalue distributions. Firstly, the tensor-generated matrices can broadly be defined as the relationships between an $m$-th order $n$-dimensional tensor $\mathcal{A}$ and an $n$-dimensional square matrix $A$. Specifically, this classification allows for aligning the elements of tensor $\mathcal{A}$ with the corresponding elements of matrix $A$ of identical dimensions.
Through these established relationships, we show that when the tensor-generated matrix is classified as an $H$-matrix, it necessarily infers that the original tensor is an $H$-tensor.
We also investigate several analogous properties shared by the original tensor and the tensor-generated matrix, such as weak irreducibility, weakly chained diagonal dominance and (strong) symmetry.
The findings will facilitate an extensive exploration of $H$-tensor theory by leveraging the well-established $H$-matrix theory. Additionally, they offer a methodology to convert complex tensor issues into matrices in specific scenarios, particularly relevant given the NP-hard nature of most tensor problems.



Subsequently, we investigate the use of tensor-generated matrices for analyzing the classicality of spin states. The preceding analysis establishes ample criteria for determining the semi-positive definiteness of even-order symmetric tensors. Drawing on the tensor representation of spin states and the tensor-generated matrix, we introduce specific classicality criteria for spin-$j$ states. Notably, an important observation arises: when the original tensor displays strong symmetry, the resulting tensor-generated matrix also exhibits symmetry. Consequently, we derive classicality criteria for spin-$j$ states characterized by strongly symmetric coefficient tensors.

Finally, by leveraging the established connections between tensors and tensor-generated matrices, we extend classical matrix eigenvalue inclusion sets to higher-order tensor $H$-eigenvalues, a task that is typically challenging for higher-order tensors. This extension is enabled by the relationship between the $H$-matrix subclass and matrix eigenvalue distribution. Consequently, we propose representative tensor eigenvalue inclusion sets, such as modified Brauer's Ovals of Cassini sets, modified Ostrowski sets, and modified $S$-type inclusion sets of tensor $H$-eigenvalues.

The paper is organized as follows: Section 2 provides the necessary preliminaries for tensor and quantum information theory. In Section 3, we introduce the concept of the tensor-generated matrix and establish the relationship between an $H$-matrix tensor-generated matrix and an $H$-tensor. We also explore similar properties shared by the original tensor and the tensor-generated matrix, such as weak irreducibility and weakly chained diagonal dominance.  In section 4, we explore the tensor-generated matrix application in spin states, and present some classical criteria for (strongly) symmetric spin-$j$ states. In Section 5, based on the established connections, we present representative tensor eigenvalue inclusion sets utilizing the connection between the $H$-matrix subclass and matrix eigenvalue distribution, including modified Brauer's Ovals of Cassini sets, modified Ostrowski sets, and modified $S$-type inclusion sets.

\section{Preliminaries}
\label{sec:pre}


In this section, we begin by introducing key definitions and established results regarding $H$-tensors and quantum information. It is important to note the consistent notation employed throughout this paper: lowercase letters, such as $x$, $y$, etc., represent vectors; italic capital letters, such as $A$, $B$, etc., represent matrices; and calligraphic capital letters, such as $\mathcal{A}$, $\mathcal{B}$, etc., denote tensors.

\subsection{Preliminaries for tensor}

Let $\mathbb{C}(\mathbb{R})$ represent the set of all complex (real) numbers, and let $N={1,2,\cdots,n}$ denote a positive integer $n\geq 2$. We define a complex (real) tensor $\mathcal{A}=(a_{i_1i_2\cdots i_m})$ of order $m$ and dimension $n$ as $\mathbb{C}^{[m\times n]}(\mathbb{R}^{[m\times n]})$, where the entries satisfy
$a_{i_1i_2\cdots i_m}\in \mathbb{C}(\mathbb{R}),$
with $i_j=1,2,\cdots,n$ for $j=1,2,\cdots,m$. Notably, a vector corresponds to a tensor of order 1, and a matrix corresponds to a tensor of order 2. For a tensor $\mathcal{A}=(a_{i_1i_2\cdots i_m})\in \mathbb{R}^{[m\times n]}$, $\mathcal{A}$ is considered nonnegative if every entry satisfies $a_{i_1i_2\cdots i_m}\geq 0$. A real tensor $\mathcal{A}=(a_{i_1i_2\cdots i_m})$ is called symmetric if
$$a_{i_1i_2\cdots i_m}=a_{\pi(i_1i_2\cdots i_m)}, \forall \pi\in\Pi_{m},$$
where $\Pi_{m}$ represents the permutation group of $m$ indices. A unit tensor, denoted by $\mathcal{I}$, has all diagonal entries 1 and otherwise entries 0.

In 2005, Qi \cite{Qi2005} and Lim \cite{Lim} independently introduced the concept of eigenvalues for higher-order tensors.
Let $\mathcal{A}=(a_{i_1i_2\cdots i_m})$ be a complex tensor of order $m$ and dimension $n$, and let $x=(x_{1},x_{2},\cdots,x_{n})^{\top}$ be an $n$-dimensional vector. The expression $\mathcal{A}x^{m-1}$ represents an $n$-dimensional vector in $\mathbb{C}^{n}$, where its $i$-th component is defined as
\begin{eqnarray*}
(\mathcal{A}x^{m-1})_i=\sum\limits_{i_2,i_3,\cdots,i_m=1}^{n}a_{ii_2\cdots i_m}x_{i_2}\cdots x_{i_m}.
\end{eqnarray*}

If there exist a complex number $\lambda$ and a nonzero complex vector $x=(x_{1},x_{2},\cdots,x_{m})^{\top}$ satisfying
\begin{equation}\label{eig}
\mathcal{A}x^{m-1}=\lambda x^{[m-1]},
\end{equation}
then $\lambda$ is referred to as an $N$-eigenvalue of $\mathcal{A}$, and $x$ is an eigenvalue of $\mathcal{A}$ associated with $\lambda$, where
$x^{[m-1]}=(x_{1}^{m-1},x_{2}^{m-1},\cdots,x_{n}^{m-1})^{\top}.$
If both $\lambda$ and $x$ are real, then $\lambda$ is called an $H$-eigenvalue of $\mathcal{A}$, and $x$ is an $H$-eigenvector of $\mathcal{A}$ associated with $\lambda$ \cite{Qi2005,Lim}.
We denote the set of all $H$-eigenvalues of $\mathcal{A}$ as $\sigma(\mathcal{A})$, which is referred to as the $H$-spectrum. The $H$-spectral radius of $\mathcal{A}$ is defined as:
\begin{equation*}
\rho(\mathcal{A})=\max\{|\lambda|; \lambda\in \sigma(\mathcal{A})\}.
\end{equation*}

Consider an $m$-degree homogeneous polynomial form of $n$ variables, denoted as $f(x)$, where $x\in \mathbb{R}^{n}$ and $\mathcal{A}\in \mathbb{R}^{[m\times n]}$ in equation \eqref{1.1}:
\begin{eqnarray}\label{1.1}
f(x)=\mathcal{A}x^{m}=\sum\limits_{i_1,i_2,\cdots,i_m=1}^{n}a_{i_1i_2\cdots i_m}x_{i_1}x_{i_2}\cdots x_{i_m}.
\end{eqnarray}
When $m$ is even, $f(x)$ is referred to as positive semidefinite if
$$f(x)\geq 0~~~~\forall x\in \mathbb{R}^{n}, x\neq 0.$$
In general, for $n\geq 3$ and $m\geq 4$, determining the positive definiteness of $f(x)$ becomes an NP-hard problem in mathematics. However, Qi \cite{Qi2005} observed that $f(x)$ defined by \eqref{1.1} is positive definite if and only if the real symmetric tensor $\mathcal{A}$ is positive semidefinite.

The product of a tensor $\mathcal{A}$ and a diagonal matrix $D$ is defined as follows \cite{SJY2013}: Let $\mathcal{A}=(a_{i_1i_2\cdots i_m}) \in \mathbb{C}^{m \times n}$ and  $D=\text{diag}(d_1, d_2, \ldots, d_n)$, one has
\begin{equation*}
  \mathcal{B}=(b_{i_1i_2\cdots i_m})=\mathcal{A}D^{m-1},\quad b_{i_1i_2\cdots i_m}=a_{i_1i_2\cdots i_m}d_{i_2}d_{i_3}\cdots d_{i_m},\quad i_1,i_2,\cdots, i_m\in N.
\end{equation*}
The resulting tensor $\mathcal{B}$ is termed as the product of tensor $\mathcal{A}$ and the matrix $D$.


The tensor $\mathcal{A} = (a_{i_1i_2\cdots i_m}) \in \mathbb{C}^{[m\times n]}$ is termed an $H$-tensor \cite{LI20141} if there exists an entrywise positive vector $x = (x_1, x_2, \ldots, x_n)^{\top} \in \mathbb{R}^{n}$ such that for all $i \in N$,
\begin{equation}\label{def3.1}
   |a_{i\cdots i}|x_{i}^{m-1} > \sum\limits_{\substack{i_2,\ldots,i_m\in N,\\ \delta_{i i_2 \cdots i_m} = 0}} |a_{i i_2\cdots i_m}|x_{i_2}\cdots x_{i_m}.
\end{equation}
If $\mathcal{A}$ is an $H$-tensor, then $0 \notin \sigma(\mathcal{A})$.

$\mathcal{A}=(a_{i_1i_2\cdots i_m}) \in \mathbb{C}^{[m\times n]}$
is $Z$-tensor if all of its off-diagonal entries are non-positive, which is equivalent to write $\mathcal{A}=s\mathcal{I}-\mathcal{B}$, where $s>0$ and $\mathcal{B}$ is a nonnegative tensor. $\mathcal{A}$ is called an $M$-tensor \cite{ZLP2014} if there exist a nonnegative tensor $\mathcal{B}$ and a positive real number $s>\rho(\mathcal{B})$ such that
\begin{equation*}
  \mathcal{A}=s\mathcal{I}-\mathcal{B}.
\end{equation*}


\subsection{Preliminaries for tensor quantum information}

In quantum information theory \cite{quantuminformation}, a pure quantum state $|v\rangle\in \mathbb{C}^n$ is a normal vector, and a mixed state $\rho\in M_n(\mathbb{C})$ is a positive semidefinite (PSD) Hermitian matrix with  $\operatorname{Tr}(\rho)=1$. A mixed state $\rho \in M_m(\mathbb{C}) \otimes M_n(\mathbb{C})$ is called separable if there exist pure states $\{|w_{i}\rangle \}_i\subseteq \mathbb{C}^m$ and $\{|v_{i}\rangle \}_i\subseteq \mathbb{C}^n$ such that
\begin{equation*}
  \rho=\sum\limits_{i}p_{i}|w_{i}\rangle \langle w_{i}| \otimes |v_{i}\rangle \langle v_{i}|,~~ p_{i}\geq 0, ~~\sum p_{i}=1,
\end{equation*}
where $\otimes$ denotes the Kronecker product \cite{horn2012matrix}, and $\langle v_{i}|$ is the dual (row) vector of $|v_{i}\rangle$. In other words, $\rho$ is separable if and only if it can be written in the form
$$\rho=\sum\limits_{i}X_{i}\otimes Y_{i},$$
where $X_{i}\in M_m(\mathbb{C})$ and $Y_{i} \in M_n(\mathbb{C})$ are  positive semidefinite matrices. If the state $\rho$ is not separable, it is called an entangled state \cite{RevModPhysQE}.

Spin-$j$ coherent states $|\alpha\rangle$ is defined for $\alpha=e^{-i \varphi} \cot (\theta / 2)$ with $\theta \in[0, \pi]$ and $\varphi \in[0,2 \pi[$ by
\begin{equation}\label{coherntn}
  |\alpha\rangle=\sum_{l=-j}^{j} \sqrt{\left(\begin{array}{c}
2 j \\
j+l
\end{array}\right)}\left[\sin \frac{\theta}{2}\right]^{j-l}\left[\cos \frac{\theta}{2} e^{-i \varphi}\right]^{j+l}|j, l\rangle
\end{equation}
in the standard angular momentum basis $\{|j, l\rangle:-j \leq$ $l \leq j\}$.
The spin coherent state $|\alpha\rangle$ can be considered as a spin-$j$ pointing in the direction $\mathbf{n}=(\sin \theta \cos \varphi, \sin \theta \sin \varphi, \cos \theta)$.

A spin-$j$ state can be seen as a symmetrized state of $2j$ spins $\frac{1}{2}$, and thus it can be written with $m = 2j$ \cite{2010H}. A spin-$j$ coherent state is the tensor product of $2j$ identical spin-$\frac{1}{2}$ coherent states. Since any spin-$\frac{1}{2}$ pure state is a coherent state, spin-$j$ coherent states are by definition the symmetric separable pure states.

A spin-$j$ state $\rho_{\mathcal{C}}$ is classical \cite{Classicality2008} if and only if it can be decomposed into the sum of spin coherent states with positive weights, that is, there are spin coherent states $|\alpha_i\rangle$ such that
\begin{equation}\label{eqclassical}
  \rho_{\mathcal{C}}=\sum\limits_{i}\omega_{i}|\alpha_{i}\rangle \langle \alpha_{i}|, \quad 0\leq \omega_{i}\leq 1, \sum\limits_{i}\omega_{i}=1.
\end{equation}
Since a coherent spin state is equivalent to a symmetric separable pure state, the entangled symmetric multiqubit state can not be written as a classical state, that is, it can not be written as in \eqref{eqclassical}.
Moreover, The literature on \cite{PhysRevA.94.042343} show that classical states are identified with fully separable symmetric states.

\section{The tensor-generated matrices and $H$-tensors}
\label{sec:main}


In this section, we introduce the concept of tensor-generated matrices. These matrices, derived from tensors, are broadly characterized as the associations between an $m$-th order $n$-dimensional tensor $\mathcal{A}$ and an $n$-dimensional square matrix $A$. This classification streamlines the alignment of elements in tensor $\mathcal{A}$ with their counterparts in matrix $A$, both sharing identical dimensions. We establish that if the tensor-generated matrix $A$ is an $H$-matrix, then the tensor $\mathcal{A}$ must be an $H$-tensor. Moreover, we delve into similar properties shared by the original tensor and its derived matrix, including weak irreducibility and weakly chained diagonal dominance. These methodologies facilitate the extension of nearly all properties of pseudo-diagonal dominance from matrices to higher-order tensors, challenging the prior notion that such properties pertained exclusively to matrices rather than tensors.

For the sake of convenience, the following notation will be used throughout the remainder of this paper:
\begin{equation*}
 s_{ij}(\mathcal{A}) =\frac{1}{m-1}\sum\limits_{k=2}^{m}\sum\limits_{\substack{i_t \in N, t\in N\setminus \{1,k\}, i_k=j,\\ \delta_{i i_2 \cdots i_m} =0}}|a_{i i_2\cdots i_m}|, \quad j=1,2,\cdots,n.
\end{equation*}

It is important to note that the value of $m$ is greater than or equal to 2. Specifically, when $m=2$, $s_{ij}(A)$ corresponds to the sum of the elements in a row of the matrix minus the diagonal elements.

\subsection{The tensor-generated matrices and $H$-tensors}

Firstly, we introduce the concept of tensor-generated matrices that characterize as the associations between an $m$-th order $n$-dimensional tensor $\mathcal{A}$ and an $n$-dimensional square matrix $A$.
\begin{definition}\label{TGM}
Consider $\mathcal{A} \in \mathbb{C}^{[m\times n]}$ as an $m$th-order $n$-dimensional tensor. From this tensor, we derive a square matrix $A=(a_{ij}) \in \mathbb{C}^{n, n}$ with $n$ dimensions, defined as:
\begin{equation}\label{matrix A}
    a_{ij}=
\begin{cases}
|a_{ii\cdots i}|-s_{ii}(\mathcal{A}),& i=j,\\
s_{ij}(\mathcal{A}),& i\neq j.
\end{cases}
\end{equation}
This square matrix $A \in \mathbb{C}^{n, n}$ is termed the tensor-generated matrix of tensor $\mathcal{A}$.
\end{definition}

In this context, we specifically map the elements of an $m$th-order $n$-dimensional tensor $\mathcal{A}$ onto a square matrix $A$ containing $n$ dimensions. This mapping involves categorizing the $n^{m}$ entries of $\mathcal{A}$ to establish the correspondences between an $m$th-order $n$-dimensional tensor $\mathcal{A}$ and an $n$-dimensional square matrix $A$.
Subsequently, we show that if the tensor-generated matrix $A \in \mathbb{C}^{n, n}$ is an $H$-matrix, then the tensor $\mathcal{A}$ is also an $H$-tensor.

\begin{theorem}\label{HH}
Let $A\in \mathbb{C}^{n,n}$ be a tensor-generated matrix by tensor $\mathcal{A} \in \mathbb{C}^{[m\times n]}$. If matrix $A$ is an $H$-matrix, then tensor $\mathcal{A}$ is an $H$-tensor.
\end{theorem}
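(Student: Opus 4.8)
The plan is to exploit the $H$-matrix characterization of $A$ to manufacture a positive scaling vector for $\mathcal{A}$, and to bridge the single-index matrix sums $s_{ij}(\mathcal{A})$ with the $(m-1)$-fold index products appearing in the tensor $H$-condition \eqref{def3.1} by means of the arithmetic--geometric mean inequality. Since $A$ is an $H$-matrix, its comparison matrix is a nonsingular $M$-matrix, so there exists an entrywise positive vector $x=(x_1,\dots,x_n)^{\top}$ with $|a_{ii}|\,x_i > \sum_{j\neq i}|a_{ij}|\,x_j$ for every $i$. Reading the entries of $A$ off Definition \ref{TGM}, and using $s_{ij}(\mathcal{A})\ge 0$, this inequality reads $\big||a_{i\cdots i}|-s_{ii}(\mathcal{A})\big|\,x_i > \sum_{j\neq i}s_{ij}(\mathcal{A})\,x_j$.

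I would then set $y_i = x_i^{1/(m-1)}>0$ as the candidate $H$-tensor vector and estimate the right-hand side of \eqref{def3.1}. For each off-diagonal multi-index, AM--GM applied to the $m-1$ quantities $y_{i_2}^{m-1},\dots,y_{i_m}^{m-1}$ gives $y_{i_2}\cdots y_{i_m}\le \frac{1}{m-1}\sum_{k=2}^{m}y_{i_k}^{m-1}$. Substituting this bound, interchanging the order of summation, and grouping the resulting terms according to the value $j=i_k$ carried by the $k$-th index, the sum reassembles \emph{exactly} into the defining expression for $s_{ij}(\mathcal{A})$, yielding $\sum_{\delta_{ii_2\cdots i_m}=0}|a_{ii_2\cdots i_m}|\,y_{i_2}\cdots y_{i_m}\le \sum_{j=1}^{n}s_{ij}(\mathcal{A})\,y_j^{m-1}=\sum_{j=1}^{n}s_{ij}(\mathcal{A})\,x_j$. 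This combinatorial re-indexing, verifying that the AM--GM upper bound collapses precisely onto the matrix row sums, is the main computational step of the argument.

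With this estimate the tensor condition $|a_{i\cdots i}|\,y_i^{m-1} > \sum_{\delta=0}|a_{ii_2\cdots i_m}|\,y_{i_2}\cdots y_{i_m}$ reduces to proving $\big(|a_{i\cdots i}|-s_{ii}(\mathcal{A})\big)\,x_i > \sum_{j\neq i}s_{ij}(\mathcal{A})\,x_j$, which is essentially the scaled diagonal-dominance inequality extracted above. The point I expect to be the real obstacle is the absolute value on the diagonal: the matrix inequality controls $\big||a_{i\cdots i}|-s_{ii}(\mathcal{A})\big|$, whereas the tensor inequality requires the \emph{signed} quantity $|a_{i\cdots i}|-s_{ii}(\mathcal{A})$ to be nonnegative. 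I would therefore establish that $|a_{i\cdots i}|\ge s_{ii}(\mathcal{A})$ holds for each $i$ (either as a consequence of the structure forced by $A$ being an $H$-matrix with nonnegative off-diagonal entries, or as a standing hypothesis on $\mathcal{A}$), so that $\big||a_{i\cdots i}|-s_{ii}(\mathcal{A})\big|=|a_{i\cdots i}|-s_{ii}(\mathcal{A})$ and the matrix and tensor inequalities coincide, thereby confirming that $\mathcal{A}$ satisfies \eqref{def3.1} and is an $H$-tensor.
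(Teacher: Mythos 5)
Your proposal is correct and follows essentially the same route as the paper: the positive scaling vector supplied by the $H$-matrix property, the AM--GM bound $y_{i_2}\cdots y_{i_m}\le\frac{1}{m-1}\sum_{k}y_{i_k}^{m-1}$, and the combinatorial re-indexing that collapses this bound onto $\sum_{j}s_{ij}(\mathcal{A})\,x_j$ are exactly the paper's steps (the paper phrases your substitution $y_i=x_i^{1/(m-1)}$ as forming $B=AD$ with $D=\mathrm{diag}(x_1^{m-1},\dots,x_n^{m-1})$). The sign issue you flag on the diagonal is a genuine subtlety, but the paper resolves it by a standing convention rather than inside this proof: the remark following Theorem \ref{thedd} stipulates that the discussion always takes place under the assumption $|a_{i\cdots i}|>s_{ii}(\mathcal{A})$, so the absolute value on the diagonal entry of $A$ coincides with the signed quantity and the matrix and tensor inequalities match as you require.
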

\begin{proof}
It is a known fact that when the tensor $\mathcal{A} \in \mathbb{C}^{[m\times n]}$ qualifies as an $H$-tensor, it satisfies the inequality defined in \eqref{def3.1},
\begin{equation*}
   |a_{i\cdots i}|x_{i}^{m-1}>\sum\limits_{\substack{i_2,\cdots,i_m\in N,\\ \delta_{i i_2 \cdots i_m} =0}}|a_{i i_2\cdots i_m}|x_{i_2}\cdots x_{i_m}.
\end{equation*}

Utilizing the arithmetic-geometric mean inequality, we can represent the right-hand side of the aforementioned inequality as follows:
\begin{equation}\label{eq3.1}
\begin{split}
& \sum\limits_{\substack{i_2,\cdots,i_m\in N,\\ \delta_{i i_2 \cdots i_m} =0}}|a_{i i_2\cdots i_m}|x_{i_2}\cdots x_{i_m}\\
=& \sum\limits_{\substack{i_2,\cdots,i_m\in N,\\ \delta_{i i_2 \cdots i_m} =0}}|a_{i i_2\cdots i_m}|^{\frac{1}{m-1}}x_{i_2}\cdots |a_{i i_2\cdots i_m}|^{\frac{1}{m-1}}x_{i_m}\\
\leq & \frac{1}{m-1}\sum\limits_{\substack{i_2,\cdots,i_m\in N,\\ \delta_{i i_2 \cdots i_m} =0}}(|a_{i i_2\cdots i_m}|x_{i_2}^{m-1}+\cdots+|a_{i i_2\cdots i_m}|x_{i_m}^{m-1})\\
= & \frac{1}{m-1}(\sum\limits_{\substack{i_2,\cdots,i_m\in N,\\ \delta_{i i_2 \cdots i_m} =0}}|a_{i i_2\cdots i_m}|x_{i_2}^{m-1}+\cdots+\sum\limits_{\substack{i_2,\cdots,i_m\in N,\\ \delta_{i i_2 \cdots i_m} =0}}|a_{i i_2\cdots i_m}|x_{i_m}^{m-1})\\
=& \frac{1}{m-1}\sum\limits_{t=2}^{m}\sum\limits_{\substack{i_2,\cdots,i_m\in N,\\ \delta_{i i_2 \cdots i_m} =0}}|a_{i i_2\cdots i_m}|x^{m-1}_{i_t}.
\end{split}
\end{equation}


It is note worthy that the following $m-1$ equalities hold for $p=2,\cdots,m$
\begin{equation}\label{eq3.20}
\begin{split}
  \sum\limits_{\substack{i_2,\cdots,i_m\in N,\\ \delta_{i i_2 \cdots i_m} =0}} & |a_{i i_2\cdots i_m}|x_{i_p}^{m-1}=\sum\limits_{\substack{i_k\in N, i_p=1,k=2\cdots m,\\ \delta_{i i_2 \cdots i_m} =0}}|a_{i i_2\cdots i_m}|x_{1}^{m-1}\\
  &+\cdots+\sum\limits_{\substack{i_k\in N,  i_p=n,k=2\cdots m,\\ \delta_{i i_2 \cdots i_m} =0}}|a_{i i_2\cdots i_m}|x_{n}^{m-1},
\end{split}
\end{equation}
If we set $p=2$, then for $m=2$, we obtain a matrix where the indices $i_3 \ldots i_m$ are eliminated. In this scenario, there is
$$\sum\limits_{\delta_{i i_2} =0}|a_{i i_2}|x_{i_2}=\sum\limits_{\delta_{i i_2 } =0}|a_{i 1}|x_1+\cdots+ \sum\limits_{\delta_{i i_2 } =0}|a_{i n}|x_n.$$

We sum up the $q$-th term from all the aforementioned equalities in \eqref{eq3.20} for $q=1$ to $n$,
\begin{equation*}
\begin{split}
   \sum\limits_{\substack{i_k\in N, i_2=q,k=2\cdots m,\\ \delta_{i i_2 \cdots i_m} =0}} & |a_{i i_2\cdots i_m}|x_{q}^{m-1}+\cdots+\sum\limits_{\substack{i_k\in N,  i_m=q,k=2\cdots m ,\\ \delta_{i i_2 \cdots i_m} =0}}|a_{i i_2\cdots i_m}|x_{q}^{m-1}\\
  =& \sum\limits_{k=2}^{n}\sum\limits_{\substack{i_t \in N, t\in N\setminus \{1,k\}, i_k=q,\\ \delta_{i i_2 \cdots i_m} =0}}|a_{i i_2\cdots i_m}|x_{q}^{m-1}.
\end{split}
\end{equation*}

It follows that
\begin{equation}\label{beq1}
  \sum\limits_{\substack{i_2,\cdots,i_m\in N,\\ \delta_{i i_2 \cdots i_m} =0}}\sum\limits_{t=2}^{m}|a_{i i_2\cdots i_m}|x^{m-1}_{i_t}=\sum\limits_{q=1}^{n}
  \sum\limits_{k=2}^{m}\sum\limits_{\substack{i_t \in N, t\in N\setminus \{1,k\}, i_k=q,\\ \delta_{i i_2 \cdots i_m} =0}}|a_{i i_2\cdots i_m}|x_{q}^{m-1}.
\end{equation}
Based on equations \eqref{eq3.1} and \eqref{beq1}, it can be inferred that
\begin{equation}\label{eq3.90}
  \sum\limits_{\substack{i_2,\cdots,i_m\in N,\\ \delta_{i i_2 \cdots i_m} =0}}|a_{i i_2\cdots i_m}|x_{i_2}\cdots x_{i_m}\leq
  \frac{1}{m-1}\sum\limits_{t=2}^{m}\sum\limits_{\substack{i_2,\cdots,i_m\in N,\\ \delta_{i i_2 \cdots i_m} =0}}|a_{i i_2\cdots i_m}|x^{m-1}_{i_t}=
  \sum\limits_{j=1}^{n}s_{ij}(\mathcal{A})x_{j}^{m-1}.
\end{equation}

In addition, we construct the matrix $B=(b_{ij})=AD\in \mathbb{C}^{n, n}$ as follows;
\begin{equation}\label{matrix A1}
    b_{ij}=
\begin{cases}
(|a_{ii\cdots i}|-s_{ii}(\mathcal{A}))x_{i}^{m-1},& i=j,\\
s_{ij}(\mathcal{A})x_{j}^{m-1},& i\neq j,
\end{cases}
\end{equation}
where $D=diag(x^{m-1}_{1},x^{m-1}_{2},\cdots,x^{m-1}_{n})$.

Given that the tensor-generated matrix $A$ is an $H$-matrix, there exists a positive diagonal matrix $D = \text{diag}(x^{m-1}_{1}, x^{m-1}_{2}, \ldots, x^{m-1}_{n})$ such that $B = AD$ forms a strictly diagonally dominant matrix. Hence, the following inequality holds:
\begin{equation*}
  (|a_{ii\cdots i}|-s_{ii}(\mathcal{A}))x_{i}^{m-1}>\sum\limits_{j\neq i}^{n}s_{ij}(\mathcal{A})x_{j}^{m-1}.
\end{equation*}
Subsequently, the following inequalities are derived using \eqref{eq3.90}:
\begin{equation*}
\begin{split}
  |a_{ii\cdots i}|x_{i}^{m-1}>&\sum\limits_{j=1}^{n}s_{ij}(\mathcal{A})x_{j}^{m-1}\\
  =& \frac{1}{m-1}\sum\limits_{\substack{i_2,\cdots,i_m\in N,\\ \delta_{i i_2 \cdots i_m} =0}}\sum\limits_{t=2}^{m}|a_{i i_2\cdots i_m}|x^{m-1}_{i_t}\\
  \geq & \sum\limits_{\substack{i_2,\cdots,i_m\in N,\\ \delta_{i i_2 \cdots i_m} =0}}|a_{i i_2\cdots i_m}|x_{i_2}\cdots x_{i_m}.
\end{split}
\end{equation*}
Consequently,  tensor $\mathcal{A}$ qualifies as an $H$-tensor. Thus, the proof is concluded.
\end{proof}

\begin{remark}
It is important to note that when $x_{i_2} = \cdots = x_{i_m} = 1$, i.e., when the equality in \eqref{eq3.1} is satisfied, there exist
\begin{equation}\label{beq2}
\begin{split}
r_{i}(\mathcal{A})=& \sum\limits_{\substack{i_2,\cdots,i_m\in N,\\ \delta_{i i_2 \cdots i_m} =0}}|a_{i i_2\cdots i_m}|=
    \frac{1}{m-1}\sum\limits_{\substack{i_2,\cdots,i_m\in N,\\ \delta_{i i_2 \cdots i_m} =0}}\sum\limits_{t=2}^{m}|a_{i i_2\cdots i_m}|\\
    = & \frac{1}{m-1}\sum\limits_{j=1}^{n}
  \sum\limits_{k=2}^{m}\sum\limits_{\substack{i_t \in N, t\in N\setminus \{1,k\}, i_k=j,\\ \delta_{i i_2 \cdots i_m} =0}}|a_{i i_2\cdots i_m}|\\
  =& s_{ii}(\mathcal{A})+\sum\limits_{j=1,j\neq i}^{n}s_{ij}(\mathcal{A}).
\end{split}
\end{equation}
This demonstrates that Definition \ref{TGM} extends the concept of $r_{i}(\mathcal{A})$, representing a particular case within our framework.
\end{remark}



\begin{definition}{\rm\cite[Definition 3.14]{ZLP2014}}\label{defd2}
A tensor $\mathcal{A}=(a_{i_1i_2\cdots i_m}) \in \mathbb{C}^{[m\times n]}$ is called a (strictly) diagonally dominant tensor if for $i\in N$,
\begin{equation}\label{eq:2.2}
 |a_{i\cdots i}|\geq (>) r_{i}(\mathcal{A})=\sum\limits_{\substack{i_2,\cdots,i_m\in N,\\ \delta_{i i_2 \cdots i_m} =0}}|a_{i i_2\cdots i_m}|.
\end{equation}
And we denote
\begin{equation*}
  J(\mathcal{A})=\{i: 1\leq i\leq n: i~~ \text{satisfies strict inequalities in \eqref{eq:2.2}}\}.
\end{equation*}
\end{definition}

Given the construction of the tensor-generated matrix $A\in \mathbb{C}^{n,n}$ from the tensor $\mathcal{A}\in \mathbb{C}^{[m\times n]}$, the following conclusion can be inferred.
\begin{theorem}\label{thedd}
Let $A\in \mathbb{C}^{n,n}$ be a tensor-generated matrix by tensor $\mathcal{A} \in \mathbb{C}^{[m\times n]}$. If the tensor-generated matrix $A$ is (strictly) diagonally dominant matrix, then  tensor $\mathcal{A}$ is (strictly) diagonally dominant tensor. 
\end{theorem}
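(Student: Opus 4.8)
The plan is to reduce the tensor diagonal-dominance inequality \eqref{eq:2.2} to the matrix diagonal-dominance inequality row by row, exploiting the bookkeeping identity already recorded in the Remark. First I would fix a row index $i \in N$ and read off the entries of the tensor-generated matrix $A$ from Definition \ref{TGM}: the off-diagonal entries are $a_{ij} = s_{ij}(\mathcal{A})$ for $j \neq i$, and since each $s_{ij}(\mathcal{A})$ is a sum of moduli it is nonnegative, so $|a_{ij}| = s_{ij}(\mathcal{A})$. Hence the $i$-th deleted row sum of $A$ equals $\sum_{j\neq i} |a_{ij}| = \sum_{j\neq i} s_{ij}(\mathcal{A})$, while the diagonal entry is $a_{ii} = |a_{i\cdots i}| - s_{ii}(\mathcal{A})$.

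Next I would invoke the identity \eqref{beq2} from the Remark, namely $r_i(\mathcal{A}) = s_{ii}(\mathcal{A}) + \sum_{j=1,\, j\neq i}^{n} s_{ij}(\mathcal{A})$, which splits the tensor row sum $r_i(\mathcal{A})$ into its diagonal-carrying part $s_{ii}(\mathcal{A})$ and the part collected into the off-diagonal matrix entries. The hypothesis that $A$ is (strictly) diagonally dominant gives $|a_{ii}| \geq (>) \sum_{j\neq i} |a_{ij}| = \sum_{j \neq i} s_{ij}(\mathcal{A}) = r_i(\mathcal{A}) - s_{ii}(\mathcal{A})$. Substituting $a_{ii} = |a_{i\cdots i}| - s_{ii}(\mathcal{A})$ and rearranging should then produce precisely $|a_{i\cdots i}| \geq (>) r_i(\mathcal{A})$, which is \eqref{eq:2.2}. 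Because every manipulation preserves a strict inequality, the strictly diagonally dominant case follows by reading ``$>$'' throughout; since $i$ is arbitrary, $\mathcal{A}$ is (strictly) diagonally dominant.

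The one step that needs care --- and the main obstacle --- is removing the modulus on the diagonal: the chain above works only when $|a_{ii}| = a_{ii}$, i.e. when $a_{ii} = |a_{i\cdots i}| - s_{ii}(\mathcal{A}) \geq 0$. If instead $|a_{i\cdots i}| < s_{ii}(\mathcal{A})$, then $|a_{ii}| = s_{ii}(\mathcal{A}) - |a_{i\cdots i}|$, and the inequality $|a_{ii}| \geq |a_{i\cdots i}| - s_{ii}(\mathcal{A})$ only yields $|a_{ii}| + s_{ii}(\mathcal{A}) \geq |a_{i\cdots i}|$, which points the wrong way and does not deliver $|a_{i\cdots i}| \geq r_i(\mathcal{A})$. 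I would therefore argue that this degenerate case does not arise under the intended reading of the tensor-generated matrix --- as in the proof of Theorem \ref{HH}, where the diagonal term $(|a_{i\cdots i}| - s_{ii}(\mathcal{A}))$ is treated as the positive dominating quantity --- or, equivalently, I would note the standard and here harmless normalization $|a_{i\cdots i}| \geq s_{ii}(\mathcal{A})$, under which $a_{ii}$ is nonnegative. Once the sign of $a_{ii}$ is settled, the remainder of the argument is a one-line rearrangement carried out for each $i \in N$.
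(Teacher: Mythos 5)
Your proposal is correct and follows essentially the same route as the paper: the paper's proof is exactly the one-line rearrangement of $|a_{ii\cdots i}|-s_{ii}(\mathcal{A})\geq(>)\sum_{j\neq i}s_{ij}(\mathcal{A})$ via the identity \eqref{beq2}. The sign issue you flag on the diagonal entry is real, and the paper resolves it the same way you do --- by the convention, stated in the remark immediately following the theorem, that diagonal dominance is only discussed under $|a_{ii\cdots i}|>s_{ii}(\mathcal{A})$.
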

\begin{proof}
If a tensor-generated matrix $A \in \mathbb{C}^{n,n}$ is derived from tensor $\mathcal{A} \in \mathbb{C}^{[m\times n]}$ and exhibits (strict) diagonal dominance, then the inequality
\begin{equation*}
  |a_{ii\cdots i}| - s_{ii}(\mathcal{A}) \geq (>) \sum\limits_{j=1,j\neq i}^{n} s_{ij}(\mathcal{A})
\end{equation*}
leads to the conclusion derived from \eqref{beq2}:
\begin{equation*}
  |a_{ii\cdots i}| \geq (>) \sum\limits_{j=1}^{n} s_{ij}(\mathcal{A}) = r_{i}(\mathcal{A}).
\end{equation*}
Thus, the proof is concluded.
\end{proof}

\begin{remark}
If there exists some $i_0\in N$ such that $|a_{i_0 i_0 \cdots i_0}|-s_{i_0 i_0}(\mathcal{A})\leq 0$, it is evident that $\mathcal{A}$ is not a diagonally dominant tensor. Therefore, without special explanation, we always discuss diagonal dominance of tensors in the case of $|a_{ii\cdots i}|>s_{ii}(\mathcal{A})$ throughout this paper.
\end{remark}

\begin{remark}
As stated in equation \eqref{beq2}, a tensor satisfying diagonal dominance, as defined in Definition \ref{defd2}, holds true when $|a_{ii\cdots i}| > s_{ii}(\mathcal{A})$.
\end{remark}

According to Theorems \ref{HH} and \ref{thedd}, the study of the class of $H$-tensors can be approached by considering the tensor-generated matrix as a diagonally dominant matrix or an $H$-matrix.

Given an $n$-dimensional the tensor-generated matrix $A = (a_{ij}) \in \mathbb{C}^{n \times n}$, let us define:
\begin{equation*}
  P_{i}(A) = \sum\limits_{j=1,j\neq i}^{n} s_{ij}(\mathcal{A}) = \frac{1}{m-1} \sum\limits_{j\neq i}^{n} \sum\limits_{k=2}^{m} \sum\limits_{\substack{i_t \in N, t\in N\setminus \{1,k\}, i_k=j,\\ \delta_{i i_2 \cdots i_m} = 0}} |a_{i i_2\cdots i_m}|,
\end{equation*}
\begin{equation*}
  Q_{i}(A) = \sum\limits_{i=1,i\neq j}^{n} s_{ij}(\mathcal{A}) = \frac{1}{m-1} \sum\limits_{i\neq j}^{n} \sum\limits_{k=2}^{m} \sum\limits_{\substack{i_t \in N, t\in N\setminus \{1,k\}, i_k=j,\\ \delta_{i i_2 \cdots i_m} = 0}} |a_{i i_2\cdots i_m}|.
\end{equation*}

Various forms of diagonal dominance exist for matrices. Here, we exemplify the fundamental types of diagonally dominant matrices.

For a matrix $A = (a_{ij}) \in \mathbb{C}^{n \times n}$, it is defined as diagonally dominant ($D_n$) if, for each $i \in N$,
\begin{equation}\label{defd}
  |a_{ii}| \geq P_{i}(A).
\end{equation}

Matrix $A = (a_{ij}) \in \mathbb{C}^{n \times n}$ is termed a doubly diagonally dominant matrix ($DD_n$) if, for all $i, j \in N$, $i \neq j$,
\begin{equation}\label{defdd}
  |a_{ii}||a_{jj}| \geq \big(P_{i}(A)\big)\big(P_{j}(A)\big).
\end{equation}

A matrix $A = (a_{ij}) \in \mathbb{C}^{n \times n}$ is considered a $\gamma$-diagonally dominant matrix ($D^{\gamma}_n$) if there exists $\gamma \in [0,1]$ such that
\begin{equation}\label{defo1}
  |a_{ii}| \geq \gamma \big(P_{i}(A)\big) + (1-\gamma)\big(Q_{i}(A)\big), \quad \forall i \in N.
\end{equation}

Matrix $A = (a_{ij}) \in \mathbb{C}^{n \times n}$ is termed a product $\gamma$-diagonally dominant matrix ($PD^{\gamma}_n$) if there exists $\gamma \in [0,1]$ such that
\begin{equation}\label{defo2}
  |a_{ii}| \geq \big(P_{i}(A)\big)^{\gamma} \big(Q_{i}(A)\big)^{1-\gamma}, \quad \forall i \in N.
\end{equation}

When all inequalities in \eqref{defd}-\eqref{defo2} hold strictly, $A$ is categorized as a strictly diagonally dominant matrix ($SD_n$), strictly doubly diagonally dominant matrix ($SDD_n$), strictly $\gamma$-diagonally dominant matrix ($SD^{\gamma}_n$), and strictly product $\gamma$-diagonally dominant matrix ($SPD^{\gamma}_n$), respectively. Furthermore, $A$ is referred to as a generalized diagonally dominant matrix ($GSD_n$), generalized $\gamma$-diagonally dominant matrix ($GSD^{\gamma}_n$), and generalized product $\gamma$-diagonally dominant matrix ($GSPD^{\gamma}_n$) if there exists a positive diagonal matrix $D$ such that $AD$ is a strictly diagonally dominant matrix, strictly $\gamma$-diagonally dominant matrix, and strictly product $\gamma$-diagonally dominant matrix, respectively.

It is well known that a matrix $A$ is a nonsingular $H$-matrix if there exists a positive diagonal matrix $D = (x_1, \cdots, x_n)^{\top}$ ($x_i > 0$) for $i \in N$ such that $AD$ is a strictly diagonally dominant matrix ($SD_n$) \cite{Varga}.

\begin{lemma}{\rm\cite{horn2012matrix,Varga}}\label{Hmat}
Let $A\in \mathbb{C}^{n,n}$. 
If matrix $A$ meets one of the following conditions, then matrix $A$ is a $H$-matrix.

(1) $A$ is a strictly diagonally dominant matrix ($SD_{n}$).

(2) $A$ is strictly doubly diagonally dominant matrix ($SDD_{n}$).

(3) $A$ is a strictly $\gamma$-diagonally dominant matrix ($ SD^{\gamma}_{n}$).

(4) $A$ is a strictly product $\gamma$-diagonally dominant matrix ($ SPD^{\gamma}_{n}$).

(5) $A$ is a generalized diagonally dominant matrix ($GSD_{n}$), generalized $\gamma$-diagonally dominant matrix ($GSD^{\gamma}_{n}$), and generalized product $\gamma$-diagonally dominant matrix ($GSPD^{\gamma}_{n}$).
\end{lemma}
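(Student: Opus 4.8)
The plan is to work throughout via the characterization recalled just before the lemma: $A$ is a nonsingular $H$-matrix if and only if there is a positive diagonal matrix $D$ with $AD$ strictly diagonally dominant, equivalently its comparison matrix $\mu(A)$ (with $\mu_{ii}=|a_{ii}|$ and $\mu_{ij}=-|a_{ij}|$ for $i\neq j$) is a nonsingular $M$-matrix. The first observation I would make is that each of conditions (1)--(4) involves the entries of $A$ only through their moduli $|a_{ij}|$; since $P_i(\mu(A))=P_i(A)$ and $Q_i(\mu(A))=Q_i(A)$, the matrix $A$ satisfies a given condition precisely when $\mu(A)$ does. This reduces every case to the statement that a $Z$-matrix $M$ with positive diagonal satisfying the dominance condition is a nonsingular $M$-matrix, which I would establish through the classical equivalence that a $Z$-matrix is a nonsingular $M$-matrix if and only if all of its principal minors are positive.

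Case (1) is immediate, since taking $D=I$ shows $A\in SD_n$ is already of the required form. For the heart of the argument, (2)--(4), I would exploit two structural features common to $SDD_n$, $SD^{\gamma}_n$ and $SPD^{\gamma}_n$. First, each dominance condition is inherited by principal submatrices: deleting rows and columns only decreases the deleted row sums $P_i$ and column sums $Q_i$, and hence their products and convex combinations, so the strict inequalities persist with the same $\gamma$. Second, each condition implies nonsingularity through an eigenvalue-inclusion theorem; for $SDD_n$ this is Brauer's ovals of Cassini, where $|0-a_{ii}||0-a_{jj}|=|a_{ii}||a_{jj}|>P_iP_j$ places $0$ outside every oval, while for $SD^{\gamma}_n$ and $SPD^{\gamma}_n$ it is the corresponding Ostrowski/Varga-type inclusion region built from mixed row-and-column sums.

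With these two features in hand, I would upgrade nonsingularity to positivity of the principal minors by a homotopy. For any principal submatrix $M'$ of $\mu(A)$, scale its off-diagonal part by $t\in[0,1]$, writing $M'(t)=\operatorname{diag}(M')+t\,(M'-\operatorname{diag}(M'))$. Shrinking the off-diagonals only tightens the dominance inequalities, so $M'(t)$ remains in the same class and is therefore nonsingular for every $t$; since $\det M'(0)=\prod|a_{ii}|>0$, continuity of $t\mapsto\det M'(t)$ forces $\det M'(1)=\det M'>0$. As this holds for every principal submatrix, $\mu(A)$ has all principal minors positive and is a nonsingular $M$-matrix, whence $A$ is an $H$-matrix. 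Finally, for (5): the class $GSD_n$ is by definition the set of matrices admitting a positive diagonal $D$ with $AD\in SD_n$, which is exactly the $H$-matrix characterization; and for $GSD^{\gamma}_n$, $GSPD^{\gamma}_n$ I would apply (3) and (4) to the scaled matrix $AD\in SD^{\gamma}_n$ (resp. $SPD^{\gamma}_n$) to conclude $AD$ is an $H$-matrix, so $(AD)D'=A(DD')\in SD_n$ for some positive diagonal $D'$, and since $DD'$ is again positive diagonal, $A$ is an $H$-matrix.

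I expect the main obstacle to be the nonsingularity step for the $\gamma$- and product-$\gamma$-dominant classes: unlike the clean Brauer argument for $SDD_n$, excluding $0$ from the Ostrowski/Varga inclusion regions mixes deleted row and column sums and requires a more delicate choice of the extremal coordinates of a hypothetical null vector. Once that input is secured, the comparison-matrix reduction and the homotopy assemble the remaining cases uniformly.
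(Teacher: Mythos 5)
The paper offers no proof of Lemma \ref{Hmat}: it is imported verbatim from Horn--Johnson and Varga, so there is no internal argument to compare yours against, and your blind reconstruction must be judged on its own terms. It is correct, and it is one of the standard ways to assemble the result. The reduction to the comparison matrix $\mu(A)$ is legitimate because conditions (1)--(4) depend on $A$ only through the moduli $|a_{ij}|$, and $A$ is a nonsingular $H$-matrix precisely when $\mu(A)$ is a nonsingular $M$-matrix; the heredity of each dominance class under passage to principal submatrices is as you say (deleted row and column sums only shrink, and the $1\times 1$ case is covered because each strict condition forces $|a_{ii}|>0$); and the homotopy $M'(t)=\mathrm{diag}(M')+t\,\bigl(M'-\mathrm{diag}(M')\bigr)$, combined with nonsingularity of every member of the class and the positive-principal-minors characterization of nonsingular $M$-matrices, closes the argument. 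Two remarks. First, the obstacle you anticipate for the $\gamma$-classes largely disappears: by weighted AM--GM (the paper's own Lemma \ref{lem4.8}) one has $SD_n^{\gamma}\subseteq SPD_n^{\gamma}$, so cases (3) and (4) both reduce to Ostrowski's nonsingularity theorem for matrices with $|a_{ii}|>\bigl(P_i(A)\bigr)^{\gamma}\bigl(Q_i(A)\bigr)^{1-\gamma}$; that theorem is exactly one of the classical facts the lemma cites and is proved by a direct H\"older estimate on a putative null vector, independently of $H$-matrix theory, so there is no circularity with Section 5, where the paper runs the implication in the opposite direction (from $H$-matrix subclasses to eigenvalue inclusion sets). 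Second, your treatment of (5) is exactly right: $GSD_n$ is by definition the $H$-matrix characterization, and $GSD_n^{\gamma}$, $GSPD_n^{\gamma}$ follow by applying (3)--(4) to $AD$ and composing the two positive diagonal scalings.
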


Nonetheless, properties beyond diagonal dominance do not straightforwardly extend to higher-order tensors due to their exclusive applicability to matrices.

However, a connection can be established between these diagonally dominant properties and higher-order tensors through the tensor-generated matrix, facilitating the alignment of $H$-tensor and $H$-matrix evaluations, thereby enhancing $H$-tensor theory.

Likewise, we can illustrate that the subsequent classical matrix outcome extends to higher-order tensors through the tensor-generated matrix in conjunction with Lemma \ref{Hmat}.

\begin{theorem}\label{thm03h}
Consider a tensor $\mathcal{A} \in \mathbb{C}^{[m\times n]}$ that generates a matrix $A\in \mathbb{C}^{n,n}$. If the tensor-generated matrix $A$ satisfies any of the conditions outlined in Lemma \ref{Hmat}, it implies that the tensor $\mathcal{A}$ is an $H$-tensor.
\end{theorem}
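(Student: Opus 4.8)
The plan is to chain together the two results already established in this section, since the statement is precisely their composition. Lemma \ref{Hmat} supplies a uniform sufficient condition for a square matrix to be an $H$-matrix: each of the five listed criteria---strict diagonal dominance ($SD_n$), strict double diagonal dominance ($SDD_n$), strict $\gamma$-diagonal dominance ($SD^{\gamma}_n$), strict product $\gamma$-diagonal dominance ($SPD^{\gamma}_n$), and their generalized counterparts---already guarantees that $A$ is an $H$-matrix. Theorem \ref{HH} then transfers exactly this property from the tensor-generated matrix to its parent tensor. The whole argument is therefore a short syllogism that needs no case-by-case computation.

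Concretely, I would first invoke Lemma \ref{Hmat} to conclude that if the tensor-generated matrix $A \in \mathbb{C}^{n,n}$ satisfies any one of the conditions (1)--(5), then $A$ is an $H$-matrix. Because $A$ is, by Definition \ref{TGM}, precisely the matrix assembled from the entries $s_{ij}(\mathcal{A})$ and $|a_{ii\cdots i}| - s_{ii}(\mathcal{A})$, the single hypothesis required by Theorem \ref{HH}---namely that $A$ be an $H$-matrix---is now in force. Applying Theorem \ref{HH} yields at once that the original tensor $\mathcal{A} \in \mathbb{C}^{[m\times n]}$ is an $H$-tensor, which completes the proof.

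I do not expect any genuine obstacle here, because the analytic content has already been discharged inside Theorem \ref{HH}. There the arithmetic--geometric mean inequality was used to bound the tensor row sum $\sum_{\delta_{ii_2\cdots i_m}=0}|a_{ii_2\cdots i_m}|\,x_{i_2}\cdots x_{i_m}$ by $\sum_{j=1}^{n} s_{ij}(\mathcal{A})\,x_j^{m-1}$, which is exactly the defining $H$-matrix inequality for $B=AD$ after scaling by the positive diagonal matrix $D$. The one subtlety a reader might anticipate---whether each distinct notion of diagonal dominance separately forces the $H$-tensor inequality \eqref{def3.1}---is entirely absorbed into the single clause ``$A$ is an $H$-matrix,'' so the five cases collapse into one and no separate verification per condition is required.
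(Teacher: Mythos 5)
Your proposal is correct and follows exactly the route the paper intends: the paper states Theorem \ref{thm03h} without a written proof precisely because it is the immediate composition of Lemma \ref{Hmat} (any of the five conditions makes $A$ an $H$-matrix) with Theorem \ref{HH} (an $H$-matrix tensor-generated matrix forces $\mathcal{A}$ to be an $H$-tensor). Nothing further is needed.
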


It is crucial to highlight that extending fundamental diagonally dominant properties to higher-order tensors via the tensor-generated matrix is feasible.
Theorem \ref{thm03h} illustrates the extension of the remaining properties in Lemma \ref{Hmat} to higher-order tensors, resembling property (1) in Lemma \ref{Hmat}. This establishes the theoretical foundation for the content in the fifth section.
\subsection{The tensor-generated matrix and weakly irreducible diagonally dominant tensor}

Emphasizing that it is feasible to extend fundamental diagonally dominant properties to higher-order tensors via the tensor-generated matrix is crucial. Most of the determination findings for $H$-matrices can be extended to higher-order tensors, including weakly irreducible diagonally dominant tensors and weakly chained diagonally dominant tensors.

\begin{definition}{\rm\cite[Definition 2.1]{KCC}}
Let $\mathcal{A}=(a_{i_1i_2\cdots i_m}) \in \mathbb{C}^{[m\times n]}$. Then $\mathcal{A}$ is called reducible, if there exists a nonempty proper index $I\subset N$ such that
\begin{equation*}
  a_{i_1i_2\cdots i_m}=0,\quad \forall i_1\in I, \quad \forall i_2, \cdots ,i_m\notin I.
\end{equation*}
If $\mathcal{A}$ is not reducible, then $\mathcal{A}$ is called irreducible.
\end{definition}

\begin{definition}{\rm\cite[Definition 2.2]{HSL2014}}
Let $\mathcal{A}=(a_{i_1i_2\cdots i_m}) \in \mathbb{C}^{[m\times n]}$ and $R(|\mathcal{A}|)$, the representation of $\mathcal{A}$, be a $n\times n$ matrix whose $(i,j)$-th entry is given by
\begin{equation*}
  R(|\mathcal{A}|)_{ij}=\sum\limits_{j\in \{i_2\cdots i_m\} }|a_{i i_2\cdots i_m}|.
\end{equation*}
Tensor $\mathcal{A}$ is called weakly reducible if $R(|\mathcal{A}|)$ is a reducible matrix. Otherwise, tensor $\mathcal{A}$ is weakly irreducible
\end{definition}

There is another equivalent about weakly irreducible tensor.
An order $m$ dimension $n$ tensor $\mathcal{A}$ is called weakly reducible, if there exists a nonempty proper index subset $I\subset N$ such that
\begin{equation*}
   a_{i_1i_2\cdots i_m}=0,\quad \forall i_1\in I, \quad \exists i_j\notin I, \quad j=2,\cdots,m.
\end{equation*}
If $\mathcal{A}$ is not weakly reducible, then we call $\mathcal{A}$ weakly irreducible \cite{HSL2014,MNA2015}.
In the context of matrices, it is worth noting that there is no distinction between irreducible and weakly irreducible matrices. However, while an irreducible tensor must also be weakly irreducible, the reverse is not necessarily true.

\begin{lemma}{\rm\cite[Theorem 3.4]{MNA2015}}\label{irh}
Let $\mathcal{A}=(a_{i_1i_2\cdots i_m}) \in \mathbb{C}^{[m\times n]}$. If $\mathcal{A}$ is irreducible,
\begin{equation*}
  |a_{i\cdots i}|\geq r_{i}(\mathcal{A}), \quad \forall i\in N,
\end{equation*}
and strictly inequality holds for at least one $i$, then $\mathcal{A}$ is an $H$-tensor.
\end{lemma}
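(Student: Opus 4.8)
The plan is to reduce the statement to a purely matrix-theoretic fact about the tensor-generated matrix $A$ of Definition \ref{TGM} and then invoke Theorem \ref{HH}. Concretely, I would show that the hypotheses force $A$ to be an irreducibly diagonally dominant matrix, which is classically known to be a nonsingular $H$-matrix \cite{Varga}; once $A$ is an $H$-matrix, Theorem \ref{HH} immediately yields that $\mathcal{A}$ is an $H$-tensor. This keeps the argument entirely within the framework already developed in this section and avoids a direct, nonlinear construction of a positive $H$-eigenvector.

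The first step is to translate the diagonal-dominance hypothesis on $\mathcal{A}$ into diagonal dominance of $A$. Using the identity \eqref{beq2}, namely $r_{i}(\mathcal{A}) = s_{ii}(\mathcal{A}) + \sum_{j \neq i} s_{ij}(\mathcal{A})$, the assumption $|a_{i\cdots i}| \geq r_{i}(\mathcal{A})$ rewrites as
\begin{equation*}
|a_{i\cdots i}| - s_{ii}(\mathcal{A}) \geq \sum_{j=1,\,j\neq i}^{n} s_{ij}(\mathcal{A}) = P_{i}(A), \qquad i \in N,
\end{equation*}
with strict inequality exactly for the indices $i \in J(\mathcal{A}) \neq \emptyset$. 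Moreover $|a_{i\cdots i}| \geq r_{i}(\mathcal{A}) \geq s_{ii}(\mathcal{A})$, so the diagonal entry $a_{ii} = |a_{i\cdots i}| - s_{ii}(\mathcal{A})$ is nonnegative and $|a_{ii}| = |a_{i\cdots i}| - s_{ii}(\mathcal{A})$. Hence $A$ is a diagonally dominant matrix in the sense of \eqref{defd}, with strict dominance in at least one row.

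The second step is to transfer irreducibility from $\mathcal{A}$ to $A$. For $i \neq j$ the off-diagonal entry $a_{ij} = s_{ij}(\mathcal{A})$ is positive precisely when some off-diagonal tensor entry $a_{i i_2 \cdots i_m} \neq 0$ carries $j$ among the indices $i_2, \ldots, i_m$; this is exactly the condition under which $R(|\mathcal{A}|)_{ij} > 0$. Thus $A$ and the representation matrix $R(|\mathcal{A}|)$ share the same off-diagonal zero pattern, so $A$ is irreducible if and only if $R(|\mathcal{A}|)$ is, i.e. if and only if $\mathcal{A}$ is weakly irreducible. Since an irreducible tensor is weakly irreducible, the hypothesis gives that $A$ is an irreducible matrix. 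Combining this with the diagonal dominance of the first step, $A$ is irreducibly diagonally dominant, hence a nonsingular $H$-matrix \cite{Varga}, and Theorem \ref{HH} concludes the proof.

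I expect the irreducibility transfer to be the main obstacle: one must verify carefully that the sum defining $s_{ij}(\mathcal{A})$ vanishes under exactly the same circumstances as the entry $R(|\mathcal{A}|)_{ij}$, and that the implication ``irreducible $\Rightarrow$ weakly irreducible'' is precisely what links the tensor hypothesis to the matrix pattern. A secondary point requiring care is the sign of the diagonal entry $|a_{i\cdots i}| - s_{ii}(\mathcal{A})$, which the chain $|a_{i\cdots i}| \geq r_{i}(\mathcal{A}) \geq s_{ii}(\mathcal{A})$ resolves. An alternative, fully self-contained route would avoid the matrix theorem and instead construct a positive vector $x$ directly by a Taussky-type argument, starting from $x = (1,\ldots,1)^{\top}$ and perturbing entries along the directed paths guaranteed by irreducibility so as to spread the strict slack at $J(\mathcal{A})$ to every index; however, this nonlinear construction is exactly what the tensor-generated matrix lets us bypass.
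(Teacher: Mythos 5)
Your argument is correct, but note that the paper gives no proof of this statement at all: Lemma \ref{irh} is imported verbatim from \cite[Theorem 3.4]{MNA2015}, whose standard proof is a direct tensor-level argument (a Taussky-type construction of a positive scaling vector that spreads the strict slack at $J(\mathcal{A})$ along the paths guaranteed by irreducibility --- essentially the ``alternative route'' you sketch at the end). What you do instead is derive the lemma from the paper's own machinery: the identity \eqref{beq2} turns the hypothesis $|a_{i\cdots i}|\geq r_{i}(\mathcal{A})$ into row-wise diagonal dominance of the tensor-generated matrix $A$ (with the sign of $a_{ii}=|a_{i\cdots i}|-s_{ii}(\mathcal{A})$ correctly settled), your zero-pattern argument --- which is exactly Theorem \ref{thmdir} combined with ``irreducible $\Rightarrow$ weakly irreducible'' --- makes $A$ an irreducible matrix, and then the classical fact that an irreducibly diagonally dominant matrix with at least one strict row is a nonsingular $H$-matrix (Lemma \ref{le3.8}(1)) together with Theorem \ref{HH} finishes. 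This is sound and non-circular, since Theorem \ref{HH}, Theorem \ref{thmdir}, and \eqref{beq2} are all established independently of Lemma \ref{irh}. As for what each approach buys: your route shows the lemma is actually a corollary of the paper's framework rather than an external input (which would make Theorem \ref{thm3.66}, whose proof currently invokes Lemma \ref{irh}, self-supporting), and it only uses weak irreducibility of $\mathcal{A}$, so it in fact proves the stronger statement with ``irreducible'' relaxed to ``weakly irreducible''; the direct proof in \cite{MNA2015} is, by contrast, self-contained and does not lean on the arithmetic--geometric mean estimate underlying Theorem \ref{HH}.
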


It can be demonstrated that the tensor-generated matrix $A$ and tensor $\mathcal{A}$ exhibit equivalent weak irreducibility, facilitating the extension of conclusions concerning $H$-matrices from irreducibility to higher-order tensors.

\begin{theorem}\label{thmdir}
Consider $A\in \mathbb{C}^{n,n}$ as the matrix generated by the tensor $\mathcal{A} \in \mathbb{C}^{[m\times n]}$. Then the tensor $\mathcal{A}$ is weakly irreducible if and only if the tensor-generated matrix $A$ is irreducible.
\end{theorem}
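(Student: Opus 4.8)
The plan is to reduce the equivalence to the elementary fact that reducibility of a square matrix is governed entirely by the zero/nonzero pattern of its \emph{off-diagonal} entries: a matrix $M\in\mathbb{C}^{n,n}$ is reducible iff there is a nonempty proper $I\subset N$ with $M_{ij}=0$ for every $i\in I$ and $j\in N\setminus I$, and the constraint $i\in I,\ j\notin I$ forces $i\neq j$, so the diagonal of $M$ plays no role in the determination. Since $\mathcal{A}$ is weakly irreducible precisely when its representation $R(|\mathcal{A}|)$ is an irreducible matrix, it suffices to show that the tensor-generated matrix $A$ and the representation $R(|\mathcal{A}|)$ share the same off-diagonal zero pattern; the two matrices will then be simultaneously irreducible, and the theorem follows.

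The core step is therefore to prove that for $i\neq j$ one has $a_{ij}=0$ if and only if $R(|\mathcal{A}|)_{ij}=0$. Both quantities are sums of moduli of entries of $\mathcal{A}$, hence each vanishes exactly when every summand vanishes. For the off-diagonal entry of the generated matrix we have $a_{ij}=s_{ij}(\mathcal{A})$, which, up to the positive factor $\tfrac{1}{m-1}$ and possible repetitions, collects the moduli $|a_{i i_2\cdots i_m}|$ of all off-diagonal entries whose index $j$ occurs in at least one of the slots $i_2,\dots,i_m$. I would record two simplifications: the multiplicities arising from the summation over $k=2,\dots,m$ and the scalar $\tfrac{1}{m-1}$ are irrelevant to whether the sum is zero, and the restriction $\delta_{i i_2\cdots i_m}=0$ is automatic here, because $i\neq j$ together with $j\in\{i_2,\dots,i_m\}$ already prevents the multi-index $(i,i_2,\dots,i_m)$ from being constant. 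Consequently $s_{ij}(\mathcal{A})>0$ exactly when there exists a nonzero entry $a_{i i_2\cdots i_m}$ with $j\in\{i_2,\dots,i_m\}$, which is verbatim the condition for $R(|\mathcal{A}|)_{ij}=\sum_{j\in\{i_2\cdots i_m\}}|a_{i i_2\cdots i_m}|$ to be positive. Hence the two off-diagonal patterns coincide.

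With the patterns matched, I would conclude both implications at once. The directed graphs associated with $A$ and with $R(|\mathcal{A}|)$ have exactly the same edge set (ignoring self-loops), so $A$ is irreducible iff $R(|\mathcal{A}|)$ is irreducible; by definition the latter is equivalent to weak irreducibility of $\mathcal{A}$, giving the claimed equivalence. The main obstacle is the bookkeeping in the middle paragraph: one must carefully unwind the nested summation defining $s_{ij}(\mathcal{A})$, confirm that counting an entry once for each position $k$ in which $j$ appears cannot turn a zero into a nonzero value or conversely, and verify that the off-diagonal restriction $\delta_{i i_2\cdots i_m}=0$ imposes no extra constraint when $i\neq j$. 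Once these three points are settled, the positivity of $s_{ij}(\mathcal{A})$ and of $R(|\mathcal{A}|)_{ij}$ become manifestly equivalent and the remainder is immediate.
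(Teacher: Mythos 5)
Your proof is correct and in substance the same as the paper's: both arguments reduce to the observation that, for $i\neq j$, the off-diagonal entry $a_{ij}=s_{ij}(\mathcal{A})$ is a nonnegative sum that vanishes exactly when every entry $a_{i i_2\cdots i_m}$ with $j\in\{i_2,\ldots,i_m\}$ vanishes. The paper phrases this through the index-subset definition of weak reducibility and runs both directions by contradiction, while you route through the representation matrix $R(|\mathcal{A}|)$ and match off-diagonal zero patterns; this is the same argument in different packaging, and your explicit check that the restriction $\delta_{i i_2\cdots i_m}=0$ is vacuous off the diagonal is a detail the paper leaves implicit.
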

\begin{proof}
Necessity: Assume that $\mathcal{A}$ is a weakly irreducible tensor. Suppose the tensor-generated matrix $A$ is reducible. Then there exists a nonempty proper index subset $I\subset N$ such that
$a_{ij}=0, \forall i\in I,  \forall j\notin I$, meaning that we have
\begin{equation*}
 a_{ij}=s_{ij}(\mathcal{A}) =\frac{1}{m-1}\sum\limits_{k=2}^{m}\sum\limits_{\substack{i_2,\cdots,i_{k-1},i_{k+1},\cdots,i_m \in N, i_k=j,\\ \delta_{i i_2 \cdots i_m} =0}}|a_{i i_2\cdots i_m}|=0, j\neq i.
\end{equation*}
Additionally, we find $a_{i i_2\cdots i_m}=0, \forall i\in I,  \exists i_2 \cdots i_m\notin I$,
implying that $\mathcal{A}$ is a weakly reducible tensor, leading to a contradiction.

Sufficiency: Given that the tensor-generated matrix $A$ is irreducible. Assuming that the tensor $\mathcal{A}$ is weakly reducible, there exists a nonempty proper index subset $I\subset N$ such that $a_{i i_2\cdots i_m}=0, \forall i\in I,  \exists i_2 \cdots i_m\notin I$, which implies
\begin{equation*}
   s_{ij}(\mathcal{A}) =\frac{1}{m-1}\sum\limits_{k=2}^{m}\sum\limits_{\substack{i_2,\cdots,i_{k-1},i_{k+1},\cdots,i_m \in N, i_k=j,\\ \delta_{i i_2 \cdots i_m} =0}}|a_{i i_2\cdots i_m}|=0, j\neq i.
\end{equation*}
Equivalently, there exists a nonempty proper index subset $I\subset N$ such that $a_{ij}= s_{ij}(\mathcal{A})=0, \forall i\in I,  \forall j\notin I$, contradicting the irreducibility of matrix $A$. This concludes the proof.
\end{proof}

\begin{corollary}\label{cor:3.5}
Consider $A \in \mathbb{C}^{n,n}$ as a matrix generated by the tensor $\mathcal{A} \in \mathbb{C}^{[m\times n]}$. If the tensor $\mathcal{A}$ is irreducible, then the tensor-generated matrix $A$ is also irreducible.
\end{corollary}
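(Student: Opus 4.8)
The plan is to obtain the corollary as an immediate consequence of Theorem \ref{thmdir}, combined with the elementary implication (noted in the discussion preceding Lemma \ref{irh}) that every irreducible tensor is weakly irreducible. The logical chain is short: irreducibility of $\mathcal{A}$ yields weak irreducibility of $\mathcal{A}$, and weak irreducibility of $\mathcal{A}$ is in turn equivalent to irreducibility of the tensor-generated matrix $A$. So there is nothing to compute; the work is purely in chaining two facts in the correct direction.

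First I would recall why an irreducible tensor is necessarily weakly irreducible. Comparing the two definitions, weak reducibility demands that $a_{i_1 i_2\cdots i_m}=0$ for every tuple with $i_1\in I$ and \emph{at least one} index $i_j\notin I$, whereas reducibility only demands this for tuples with $i_1\in I$ and \emph{all} of $i_2,\dots,i_m\notin I$. Since the family of tuples with all of $i_2,\dots,i_m\notin I$ is contained in the family of tuples with at least one $i_j\notin I$, weak reducibility forces more entries to vanish than reducibility does. Hence weak reducibility implies reducibility, and, contrapositively, irreducibility implies weak irreducibility.

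Second, I would invoke Theorem \ref{thmdir}, which establishes that $\mathcal{A}$ is weakly irreducible if and only if $A$ is irreducible. Applying the ``only if'' direction to the weakly irreducible tensor $\mathcal{A}$ obtained in the first step immediately gives that the tensor-generated matrix $A$ is irreducible, which completes the argument.

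There is essentially no technical obstacle, since the corollary is a formal consequence of a previously proved equivalence together with a definitional comparison. The one point that genuinely deserves care is the \emph{direction} of the implication between reducibility and weak reducibility: one must verify that weak reducibility is the stronger notion, so that the hypothesis of irreducibility propagates to weak irreducibility and not the reverse. Once this inclusion of index families is checked correctly, Theorem \ref{thmdir} supplies the remaining step and the proof is concluded.
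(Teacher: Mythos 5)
Your argument is correct and matches the paper's intended derivation: the corollary is stated immediately after Theorem \ref{thmdir} precisely as the composition of the remark that every irreducible tensor is weakly irreducible with the equivalence between weak irreducibility of $\mathcal{A}$ and irreducibility of $A$. Your careful check that weak reducibility is the stronger vanishing condition (so the implication runs in the right direction) is exactly the point the paper relies on.
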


Leveraging Theorem \ref{thedd}, Theorem \ref{thmdir}, and Lemma \ref{irh}, we can draw the subsequent deductions.

\begin{theorem}\label{thm3.66}
Let $A\in \mathbb{C}^{n,n}$ be a tensor-generated matrix by tensor $\mathcal{A} \in \mathbb{C}^{[m\times n]}$. If the tensor-generated matrix $A$ is irreducible diagonally dominant matrix and strictly inequality holds for at least one, then $\mathcal{A}$ is weakly irreducible diagonally dominant tensor. Moreover, tensor $\mathcal{A}$ is $H$-tensor.
\end{theorem}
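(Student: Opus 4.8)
The plan is to read off the two structural conclusions for $\mathcal{A}$ from the equivalences already proved, and only then to deduce the $H$-tensor property. First I would dispose of weak irreducibility: since $A$ is assumed irreducible, Theorem~\ref{thmdir} (which states that $\mathcal{A}$ is weakly irreducible if and only if $A$ is irreducible) immediately gives that $\mathcal{A}$ is weakly irreducible, with no computation required.

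Next I would transfer the diagonal dominance index by index. Working in the standing regime $|a_{ii\cdots i}|>s_{ii}(\mathcal{A})$, so that $a_{ii}=|a_{ii\cdots i}|-s_{ii}(\mathcal{A})>0$ and hence $|a_{ii}|=a_{ii}$, the matrix diagonal dominance of $A$ reads
\[
|a_{ii\cdots i}|-s_{ii}(\mathcal{A})\ \geq\ P_{i}(A)=\sum_{j\neq i}s_{ij}(\mathcal{A}),\qquad i\in N,
\]
with strict inequality for at least one index $i_{0}$. Adding $s_{ii}(\mathcal{A})$ to both sides and using the identity \eqref{beq2}, namely $r_{i}(\mathcal{A})=s_{ii}(\mathcal{A})+\sum_{j\neq i}s_{ij}(\mathcal{A})$, I obtain $|a_{ii\cdots i}|\geq r_{i}(\mathcal{A})$ for every $i$ and strict inequality at $i_{0}$. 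By Definition~\ref{defd2} this is precisely the assertion that $\mathcal{A}$ is a diagonally dominant tensor with $i_{0}\in J(\mathcal{A})$, so $J(\mathcal{A})\neq\emptyset$; this is Theorem~\ref{thedd} with the strict row carried along unchanged.

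It remains to derive the $H$-tensor property, and here I see two routes. The cleanest is to go back through the matrix: $A$ is irreducible and diagonally dominant with at least one strict row, i.e.\ $A$ is \emph{irreducibly diagonally dominant}, which by classical matrix theory (the Taussky criterion, see \cite{Varga}) is a nonsingular $H$-matrix; Theorem~\ref{HH} then yields that $\mathcal{A}$ is an $H$-tensor. Alternatively, having already shown that $\mathcal{A}$ is a (weakly irreducible) diagonally dominant tensor with $J(\mathcal{A})\neq\emptyset$, one applies the weakly irreducible analogue of Lemma~\ref{irh}.

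The main obstacle I anticipate is exactly the gap between \emph{irreducible} and \emph{weakly irreducible}. Theorem~\ref{thmdir} only produces weak irreducibility of $\mathcal{A}$ from irreducibility of $A$ (the reverse implication, Corollary~\ref{cor:3.5}, is the one that upgrades to full irreducibility), whereas Lemma~\ref{irh} as literally stated requires $\mathcal{A}$ to be irreducible. I would therefore avoid invoking Lemma~\ref{irh} on $\mathcal{A}$ directly and instead route the $H$-tensor conclusion through $A$ and Theorem~\ref{HH}, where irreducible diagonal dominance is precisely the classical sufficient condition for the nonsingular $H$-matrix property; this sidesteps the distinction cleanly while still using the diagonal-dominance transfer and the irreducibility equivalence established above.
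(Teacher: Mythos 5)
Your proposal is correct, and for the first two conclusions it coincides with the paper's proof: the paper likewise invokes Theorem~\ref{thmdir} to get weak irreducibility of $\mathcal{A}$ from irreducibility of $A$, and Theorem~\ref{thedd} (via the identity \eqref{beq2}) to transfer diagonal dominance, with the strict row carried along. Where you diverge is the last step. The paper applies Lemma~\ref{irh} directly to the tensor $\mathcal{A}$ — that is, it does exactly the thing you flagged as problematic, invoking a lemma stated for \emph{irreducible} tensors on a tensor for which only \emph{weak} irreducibility has been established. This works only because the cited source (Theorem 3.4 of \cite{MNA2015}) is in fact a statement about weakly irreducible diagonally dominant tensors, as the paper's own remark after the theorem acknowledges; the hypothesis ``irreducible'' in Lemma~\ref{irh} as transcribed is stronger than what the source requires. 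Your alternative — noting that $A$ is irreducibly diagonally dominant, hence a nonsingular $H$-matrix by the classical Taussky/Varga criterion, and then applying Theorem~\ref{HH} — avoids this mismatch entirely and uses only machinery already established before this theorem in the paper. Both routes are valid; yours is the more defensible given the literal statement of Lemma~\ref{irh}, while the paper's is shorter and directly exhibits $\mathcal{A}$ as a weakly irreducible diagonally dominant tensor before concluding, which is the structure the theorem statement advertises.
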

\begin{proof}
If the tensor-generated matrix $A\in \mathbb{C}^{n,n}$ by tensor $\mathcal{A} \in \mathbb{C}^{[m\times n]}$ is irreducible, then Theorem \ref{thmdir} implies that tensor $A$ is weakly irreducible.
Since the tensor-generated matrix $A\in \mathbb{C}^{n,n}$ is diagonally dominant, according to Theorem \ref{thedd}, tensor $\mathcal{A}$ is diagonally dominant. Furthermore, based on Lemma \ref{irh}, tensor $\mathcal{A}$ is an $H$-tensor. The proof is completed.
\end{proof}

\begin{remark}
Since tensor $\mathcal{A}$ is weakly irreducible if and only if tensor-generated matrix $A\in \mathbb{C}^{n,n}$ by tensor $\mathcal{A}$ is irreducible. This aligns with the finding from (Theorem 3.4 in \cite{MNA2015}) that a weakly irreducible diagonally dominant tensor is an $H$-tensor.
\end{remark}

\begin{corollary}
Consider $A\in \mathbb{C}^{n,n}$ as a tensor-generated matrix by irreducible tensor $\mathcal{A} \in \mathbb{C}^{[m\times n]}$. If the tensor-generated matrix $A$ is diagonally dominant matrix and strictly inequality holds for at least one, then  $\mathcal{A}$ is $H$-tensor.
\end{corollary}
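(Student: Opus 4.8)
The plan is to recognize this corollary as an immediate specialization of the machinery already assembled, combining Theorem \ref{thedd}, Lemma \ref{irh}, and (optionally) Corollary \ref{cor:3.5}. First I would transfer the diagonal-dominance hypothesis from the generated matrix $A$ to the tensor $\mathcal{A}$. By hypothesis $A$ is diagonally dominant, so $|a_{ii}| = |a_{i\cdots i}| - s_{ii}(\mathcal{A}) \geq P_{i}(A) = \sum_{j=1,j\neq i}^{n} s_{ij}(\mathcal{A})$ for every $i \in N$, with the strict inequality holding for at least one index, say $i_0$. Invoking the identity \eqref{beq2}, namely $r_{i}(\mathcal{A}) = s_{ii}(\mathcal{A}) + \sum_{j=1,j\neq i}^{n} s_{ij}(\mathcal{A})$, this rearranges immediately to $|a_{i\cdots i}| \geq r_{i}(\mathcal{A})$ for all $i$ and $|a_{i_0\cdots i_0}| > r_{i_0}(\mathcal{A})$, which is precisely the conclusion of Theorem \ref{thedd} applied index by index.

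Next I would supply the irreducibility ingredient. Since $\mathcal{A}$ is assumed irreducible, the two remaining hypotheses of Lemma \ref{irh} — that $|a_{i\cdots i}| \geq r_{i}(\mathcal{A})$ for all $i$ with strict inequality for at least one $i$ — are exactly what was established in the previous step. Applying Lemma \ref{irh} then yields that $\mathcal{A}$ is an $H$-tensor, completing the argument.

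A more streamlined route is to observe that because $\mathcal{A}$ is irreducible, Corollary \ref{cor:3.5} guarantees the generated matrix $A$ is irreducible as well; the hypotheses of Theorem \ref{thm3.66} (an irreducible diagonally dominant $A$ with at least one strict index) are then met verbatim, and its conclusion delivers the result in a single step. Either way there is no genuine obstacle, since the statement is a corollary in the literal sense. The only point warranting care is verifying that the single strict index guaranteed in the matrix inequality survives the passage to the tensor inequality; this is assured because the translation in \eqref{beq2} is an exact identity rather than merely an inequality, so strictness at $i_0$ for $A$ forces strictness at $i_0$ for $\mathcal{A}$.
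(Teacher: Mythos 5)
Your proposal is correct and follows essentially the same route the paper intends: the corollary is left unproved there precisely because it reduces, via Corollary \ref{cor:3.5} (irreducible tensor $\Rightarrow$ irreducible generated matrix), to Theorem \ref{thm3.66}, or equivalently to Theorem \ref{thedd} plus Lemma \ref{irh} as in your first argument. Your observation that strictness at a given index survives the translation because \eqref{beq2} is an exact identity is the right point to check, and it holds.
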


Theorem \ref{thmdir} demonstrates that both the tensor and its tensor-generated matrix demonstrate equivalent weak irreducibility. This observation directs our focus towards exploring the properties of $H$-tensors in the realm of weak irreducibility.

\begin{lemma}{\rm\cite{horn2012matrix,Varga}}\label{le3.8}
If matrix $A\in \mathbb{C}^{n,n}$ meets one of the following conditions, then matrix $A$ is a $H$-matrix.

(1) $A$ is a  irreducible diagonally dominant matrix and strictly inequality holds for at least one.

(2) $A$ is a irreducible doubly diagonally dominant matrix and strictly inequality holds for at least one.

(3) $A$ is a  irreducible $\gamma$-diagonally dominant matrix and strictly inequality holds for at least one.

(4) $A$ is a  irreducible product $\gamma$-diagonally dominant matrix and strictly inequality holds for at least one.
\end{lemma}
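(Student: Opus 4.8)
The plan is to route all four cases through the comparison matrix and the theory of $M$-matrices, so that the strict versions already recorded in Lemma \ref{Hmat} do most of the work. Recall that $A$ is a (nonsingular) $H$-matrix precisely when its comparison matrix $\mathcal{M}(A)$, defined by $(\mathcal{M}(A))_{ii}=|a_{ii}|$ and $(\mathcal{M}(A))_{ij}=-|a_{ij}|$ for $i\neq j$, is a nonsingular $M$-matrix. Since each of the dominance notions in \eqref{defd}--\eqref{defo2} depends only on the moduli $|a_{ij}|$, and $\mathcal{M}(A)$ shares both the modulus pattern and the zero pattern of $A$ (hence the same irreducibility), it suffices to prove the statement when $A$ itself is a $Z$-matrix with positive diagonal; in that case $\mathcal{M}(A)=A$.

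First I would dispose of the $M$-matrix part by a perturbation. For $\varepsilon>0$ the matrix $A+\varepsilon I$ has the same off-diagonal moduli as $A$ while each diagonal entry is increased by $\varepsilon$; consequently every weak inequality in the relevant condition (1)--(4) becomes strict (for (2) one uses $(|a_{ii}|+\varepsilon)(|a_{jj}|+\varepsilon)>|a_{ii}||a_{jj}|\geq P_i(A)P_j(A)$, and similarly for the $\gamma$- and product-$\gamma$-forms). By the strict counterparts in Lemma \ref{Hmat}, $A+\varepsilon I$ is an $H$-matrix; being a $Z$-matrix it equals its own comparison matrix and is therefore a nonsingular $M$-matrix. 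Letting $\varepsilon\to 0^{+}$ and using that the set of $M$-matrices is closed, $A$ is a (possibly singular) $M$-matrix. It remains only to establish nonsingularity.

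For nonsingularity I would write $A=sI-P$ with $s=\max_i|a_{ii}|$ and $P\geq 0$, where $P$ is irreducible because $A$ is; since $A$ is an $M$-matrix we have $\rho(P)\leq s$, and $A$ is a nonsingular $M$-matrix iff $\rho(P)<s$. Suppose for contradiction $\rho(P)=s$. Perron--Frobenius supplies a strictly positive eigenvector $x$ with $Px=sx$, equivalently the weighted equalities $|a_{ii}|x_i=\sum_{j\neq i}|a_{ij}|x_j$ for every $i$. In case (1) I evaluate this at an index $k$ maximizing $x_k$ to get $|a_{kk}|x_k\leq P_k(A)x_k$, force equality against diagonal dominance, and then propagate equality of the $x_j$ along the edges of the strongly connected digraph of $P$, concluding that every row is an equality, which contradicts the one strict row. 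For cases (2)--(4) the same contradiction is reached by the Brauer--Cassini / $\gamma$-oval mechanism: choosing the two largest components $x_p\geq x_q$ yields $|a_{pp}|x_p\leq P_p(A)x_q$ and $|a_{qq}|x_q\leq P_q(A)x_p$, whose product gives $|a_{pp}||a_{qq}|\leq P_p(A)P_q(A)$, contradicting strict double dominance unless all the intervening inequalities are equalities, which irreducibility again forbids; the $\gamma$- and product-$\gamma$-cases run analogously after combining these row estimates with the corresponding column estimates obtained from the positive left eigenvector of $P$ (equivalently, from $A^{\top}$), so that the deleted column sums $Q_i(A)$ enter. Hence $\rho(P)<s$, $A$ is a nonsingular $M$-matrix, and the original $A$ is an $H$-matrix.

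The main obstacle is this last step for parts (2)--(4): unlike the plain diagonally dominant case, the defining inequality couples two indices (and, for the $\gamma$- and product-$\gamma$-forms, the deleted column sums as well), so the Perron-eigenvector argument must track the two largest components simultaneously, balance the exponent $\gamma$, and combine row- and column-information coherently before irreducibility can force the global contradiction. One must also check the boundary situations in which some $P_i(A)$ or $Q_i(A)$ vanishes, to be sure the perturbation step genuinely turns each weak inequality strict. Everything else is a routine transfer of well-known matrix facts through the comparison matrix.
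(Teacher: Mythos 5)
First, a point of reference: the paper does not prove Lemma~\ref{le3.8} at all --- it is quoted as a known result from \cite{horn2012matrix,Varga} --- so there is no in-paper argument to measure yours against and your proposal must stand on its own. Its architecture is sound and standard: pass to the comparison matrix, apply the strict versions from Lemma~\ref{Hmat} to $A+\varepsilon I$ and use closedness of the $M$-matrix cone to get $\rho(P)\leq s$ for $A=sI-P$, then try to exclude $\rho(P)=s$ via Perron--Frobenius. Case (1) is complete: this is exactly Taussky's argument, and the equality propagation along the strongly connected digraph is correct.

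The genuine gap is in the exclusion step for cases (2)--(4), which you have in effect flagged yourself. With $Px=sx$, $x>0$, and $x_p\geq x_q$ the two largest components, your product estimate only yields $|a_{pp}||a_{qq}|\leq P_p(A)P_q(A)$, hence \emph{equality for the particular pair $(p,q)$}; the hypothesis grants strictness for only one pair $(i_0,j_0)$, which need not be $(p,q)$, so there is no contradiction yet. One must then propagate the forced equalities until they reach $(i_0,j_0)$, and here the mechanism of case (1) does not transfer: equality forces $x_j=x_q$ for the out-neighbours of $p$ but $x_j=x_p$ for the out-neighbours of $q$ --- two \emph{different} values when $x_p>x_q$ --- so maximality does not simply spread along the strongly connected digraph. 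This is precisely the delicate part of the boundary versions of Brauer's and Ostrowski's theorems (treated at length, with historical corrections, in Varga's account), and the phrase ``which irreducibility again forbids'' is doing all of that work without an argument. Cases (3)--(4) have the further unresolved issue that the right and left Perron vectors generally attain their maxima at different indices, so it is unclear how the row estimates (from $x$) and the column estimates (from the left eigenvector) combine at a single index $i$ to produce $|a_{ii}|\leq \gamma P_i(A)+(1-\gamma)Q_i(A)$ or $|a_{ii}|\leq P_i(A)^{\gamma}Q_i(A)^{1-\gamma}$; the classical Ostrowski proof is a global H\"older-inequality argument over all indices, not a two-extremal-component argument. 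Everything up to and including case (1) is fine; (2)--(4) need a genuinely more careful closing argument than the one sketched.
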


Theorem \ref{thm3.66} affirms the feasibility of extending from case (1) in Lemma \ref{le3.8} to higher-order tensors. This extension is not limited to that particular case alone; indeed, all scenarios outlined in Lemma \ref{le3.8} are generalizable to the tensor scenario through the utilization of the tensor-generated matrix.

In essence, the deductions stemming from Lemma \ref{le3.8} and Theorem \ref{HH} lead us to the following conclusions. The proof methodology mirrors a comparable approach, and we will avoid redundancy by omitting its repetition.

\begin{theorem}
Consider $A\in \mathbb{C}^{n,n}$ as a tensor-generated matrix by tensor $\mathcal{A} \in \mathbb{C}^{[m\times n]}$. If matrix $A$ satisfies one of the conditions in  Lemma \ref{le3.8}, then  $\mathcal{A}$ is an $H$-tensor.
\end{theorem}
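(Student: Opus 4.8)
The plan is to prove the statement by composing two results already established in the excerpt: Lemma \ref{le3.8}, which supplies four distinct sufficient conditions for a matrix to be an $H$-matrix, and Theorem \ref{HH}, which transfers the $H$-property from a tensor-generated matrix back to its originating tensor. Because the hypothesis only asserts that $A$ satisfies \emph{one} of the four conditions in Lemma \ref{le3.8}, the argument should not branch into four separate computations; instead I would route every case through the common intermediate conclusion that $A$ is an $H$-matrix, which is precisely what Lemma \ref{le3.8} delivers in each instance.

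First I would recall that $A$ is, by hypothesis, the tensor-generated matrix of $\mathcal{A}$ in the sense of Definition \ref{TGM}, so its diagonal entries are $|a_{i\cdots i}| - s_{ii}(\mathcal{A})$ and its off-diagonal entries are $s_{ij}(\mathcal{A})$. Suppose $A$ meets any one of the conditions (1)--(4) of Lemma \ref{le3.8}, that is, $A$ is irreducible together with one of the flavours of (plain / doubly / $\gamma$- / product $\gamma$-) diagonal dominance, with strict inequality holding for at least one index. By Lemma \ref{le3.8} each of these hypotheses already forces $A$ to be an $H$-matrix, so in all four cases we arrive at the single conclusion that $A \in \mathbb{C}^{n,n}$ is an $H$-matrix.

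The second and final step is a direct appeal to Theorem \ref{HH}. Since $A$ is the tensor-generated matrix of $\mathcal{A}$ and $A$ has just been shown to be an $H$-matrix, Theorem \ref{HH} immediately yields that $\mathcal{A}$ is an $H$-tensor. Concretely, the $H$-matrix property of $A$ furnishes a positive diagonal matrix $D = \operatorname{diag}(x_1^{m-1}, \ldots, x_n^{m-1})$ for which $AD$ is strictly diagonally dominant, and the arithmetic--geometric mean estimate carried out in the proof of Theorem \ref{HH} converts this row-wise strict dominance of $AD$ into the defining inequality \eqref{def3.1} for the positive vector $x = (x_1, \ldots, x_n)^{\top}$, certifying that $\mathcal{A}$ is an $H$-tensor.

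There is essentially no analytic obstacle here; the substance of the theorem lies entirely in having already decoupled the problem into the matrix-side criteria (Lemma \ref{le3.8}) and the tensor-to-matrix bridge (Theorem \ref{HH}). The only point that deserves a line of care is the uniformity remark above: each of the four conditions is merely a different entry point into the same $H$-matrix conclusion, so no condition-specific calculation is required and the proof collapses to the two implications ``condition $\Rightarrow$ $A$ is an $H$-matrix $\Rightarrow$ $\mathcal{A}$ is an $H$-tensor.'' This matches the authors' stated intention to avoid repeating the already-executed details.
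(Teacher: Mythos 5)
Your proposal is correct and follows exactly the route the paper intends: it explicitly states that this theorem is obtained by combining Lemma \ref{le3.8} (each condition forces $A$ to be an $H$-matrix) with Theorem \ref{HH} (the $H$-matrix property of the tensor-generated matrix transfers to $\mathcal{A}$ being an $H$-tensor), and omits the details as redundant. Your uniform treatment of the four cases through the single intermediate conclusion that $A$ is an $H$-matrix is precisely the intended argument.
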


%

\subsection{The tensor-generated matrix and weakly chained diagonally dominant tensor}

It is crucial to emphasize the possibility of extending fundamental diagonally dominant properties to higher-order tensors through the utilization of the tensor-generated matrix. Most determinant findings for $H$-matrices are transferable to higher-order tensors, encompassing weakly chained diagonally dominant tensors. Prior to outlining our conclusions, it is necessary to introduce pertinent findings within the directed graph linked to tensors.

\begin{definition}{\rm\cite[Definition 13]{Parsiad1}}
Let $\mathcal{A}=(a_{i_1i_2\cdots i_m}) \in \mathbb{C}^{[m\times n]}$ be a tensor.
\rm{(i)} The directed graph of $\mathcal{A}$, denoted graph$\mathcal{A}$, is a tuple $(V,E)$ consisting of the vertex set $V=\{1,\cdots,n\}$ and edge set $E\subset V\times V$ satisfying $(i,j)\in E$ if and only if
$a_{i_1i_2\cdots i_m}\neq 0$ for some $(i_2,\cdots, i_m)$ such that $j\in \{i_2,\cdots, i_m\}$.
\rm{(ii)} A walk in graph$\mathcal{A}$ is a nonempty finite sequence $(i^{1},i^{2}),(i^{2},i^{3}),\cdots, (i^{k-1},i^{k})$ of ``adjacent" edges in $E$.
\end{definition}

\begin{lemma}{\rm\cite[Lemma 14]{Parsiad1}}\label{lem:3.8}
graph$\mathcal{A}$=graph$R(|\mathcal{A}|)$ for any tensor $\mathcal{A}$.
\end{lemma}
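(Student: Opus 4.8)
The plan is to prove the equality of the two directed graphs by verifying that they share both the same vertex set and the same edge set. The common vertex set is immediate, since by definition both graph$\mathcal{A}$ and graph$R(|\mathcal{A}|)$ are built on $V=\{1,2,\ldots,n\}$. Hence it suffices to establish that an arbitrary pair $(i,j)\in V\times V$ is an edge of graph$\mathcal{A}$ if and only if it is an edge of graph$R(|\mathcal{A}|)$.

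First I would unfold the two edge conditions. By the definition of the directed graph of a tensor, $(i,j)$ is an edge of graph$\mathcal{A}$ precisely when there exists an index tuple $(i_2,\ldots,i_m)$ with $j\in\{i_2,\ldots,i_m\}$ such that $a_{i i_2\cdots i_m}\neq 0$. On the other hand, $R(|\mathcal{A}|)$ is an ordinary $n\times n$ matrix, so $(i,j)$ is an edge of graph$R(|\mathcal{A}|)$ exactly when its $(i,j)$-entry is nonzero, i.e. when
$$R(|\mathcal{A}|)_{ij}=\sum_{j\in\{i_2\cdots i_m\}}|a_{i i_2\cdots i_m}|\neq 0.$$

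The key step, and the only point requiring care, is to observe that this is a sum of nonnegative real numbers, since each summand is an absolute value $|a_{i i_2\cdots i_m}|\geq 0$. Consequently no cancellation can occur, and the sum is strictly positive if and only if at least one of its summands is nonzero; that is, $R(|\mathcal{A}|)_{ij}\neq 0$ if and only if there is some tuple $(i_2,\ldots,i_m)$ with $j\in\{i_2,\ldots,i_m\}$ and $|a_{i i_2\cdots i_m}|\neq 0$, equivalently $a_{i i_2\cdots i_m}\neq 0$. This is exactly the edge condition for graph$\mathcal{A}$, so the two edge sets coincide and the graphs are equal. I expect the main (indeed essentially the only) subtlety to be the reliance on the absolute values in the definition of $R(|\mathcal{A}|)$: if the raw entries $a_{i i_2\cdots i_m}$ were summed instead, cancellation could make a nonzero collection of entries sum to zero, and the equivalence would break down. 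Taking $|\mathcal{A}|$ rules this out, which is precisely why the representation is defined using absolute values.
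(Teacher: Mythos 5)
Your proof is correct. The paper itself offers no proof of this lemma (it is quoted verbatim from \cite[Lemma 14]{Parsiad1}), so there is nothing to compare against; your direct verification — same vertex set, and the edge conditions coincide because $R(|\mathcal{A}|)_{ij}$ is a sum of nonnegative terms and hence vanishes exactly when every summand $|a_{i i_2\cdots i_m}|$ with $j\in\{i_2,\ldots,i_m\}$ vanishes — is the standard and essentially only argument, and you correctly identify the role of the absolute values in precluding cancellation.
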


\begin{definition}{\rm\cite[Definition 15]{Parsiad1}}\label{def:3.7}
A tensor $\mathcal{A}$ is weakly chained diagonally dominant if all of the following are satisfied:
\rm{(i)} $\mathcal{A}$ is diagonally dominant.
\rm{(ii)} $J(\mathcal{A})$ is nonempty.
\rm{(iii)} For each $i^{1}\notin J(\mathcal{A})$, there exists a walk $i^{1}\rightarrow i^{2}\rightarrow \cdots \rightarrow i^{k}$ in graph $\mathcal{A}$ such that $i^{k}\in J(\mathcal{A})$.
\end{definition}

\begin{lemma}{\rm\cite[Theorem 1]{Parsiad1}}\label{lem:3.9}
Let $\mathcal{A}$ be a weakly chained diagonally dominant $Z$-tensor with nonnegative diagonals, then $\mathcal{A}$ is a $M$-tensor.
\end{lemma}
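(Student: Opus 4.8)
The plan is to work directly with the $Z$-tensor structure and to reduce the statement to a single strict spectral inequality. Since $\mathcal{A}$ is a $Z$-tensor with nonnegative diagonal entries, I would first fix $s \geq \max_{i\in N} a_{i\cdots i}$ and set $\mathcal{B} = s\mathcal{I} - \mathcal{A}$. Then $\mathcal{B}$ is nonnegative, because its diagonal entries are $s - a_{i\cdots i}\geq 0$ and its off-diagonal entries are $-a_{ii_2\cdots i_m}\geq 0$ (as $\mathcal{A}$ is a $Z$-tensor), and by construction $\mathcal{A} = s\mathcal{I}-\mathcal{B}$. By the definition of an $M$-tensor it then suffices to prove the \emph{strict} inequality $\rho(\mathcal{B}) < s$; the entire content of the lemma lies in converting the three weak-chain hypotheses of Definition \ref{def:3.7} into this strict gap.

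I would establish $\rho(\mathcal{B}) < s$ by contradiction, assuming $\rho(\mathcal{B}) \geq s$. By the Perron--Frobenius theorem for nonnegative tensors, $\rho(\mathcal{B})$ is an $H$-eigenvalue of $\mathcal{B}$ with an associated nonnegative eigenvector $v \geq 0$, $v \neq 0$, so that $\mathcal{B}v^{m-1} = \rho(\mathcal{B})\,v^{[m-1]}$. Substituting the decomposition and using $\mathcal{I}v^{m-1}=v^{[m-1]}$ gives
\begin{equation*}
\mathcal{A}v^{m-1} = s\,v^{[m-1]} - \mathcal{B}v^{m-1} = \big(s-\rho(\mathcal{B})\big)\,v^{[m-1]} \leq 0
\end{equation*}
entrywise, since $s - \rho(\mathcal{B})\leq 0$ and $v^{[m-1]}\geq 0$.

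The core step is a maximum-attaining argument. Let $V = \max_{i\in N} v_i > 0$ and let $M = \{i\in N : v_i = V\}$, which is nonempty. For any $p\in M$, reading off the $p$-th component of $\mathcal{A}v^{m-1}\leq 0$ and bounding each factor by $v_{i_t}\leq V$ yields
\begin{equation*}
a_{p\cdots p}V^{m-1} \leq \sum_{\substack{i_2,\cdots,i_m\in N,\\ \delta_{p i_2\cdots i_m}=0}} |a_{p i_2\cdots i_m}|\, v_{i_2}\cdots v_{i_m} \leq V^{m-1} r_{p}(\mathcal{A}).
\end{equation*}
Because $\mathcal{A}$ has nonnegative diagonal and is diagonally dominant (Definition \ref{defd2}), $a_{p\cdots p} = |a_{p\cdots p}|\geq r_p(\mathcal{A})$, so in fact $a_{p\cdots p} = r_p(\mathcal{A})$, whence $p\notin J(\mathcal{A})$ and both inequalities above are equalities. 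The tightness of the middle inequality forces $v_{i_2}\cdots v_{i_m} = V^{m-1}$ for every nonzero term, and since each factor is at most $V$ this forces $v_{i_t} = V$ for every index $i_t$ occurring in a nonzero entry $a_{p i_2\cdots i_m}$. In graph terms this says $M\cap J(\mathcal{A}) = \emptyset$ and that $M$ is closed under the out-edges of graph$\mathcal{A}$, whose edge $(p,j)$ by definition records a nonzero entry $a_{p i_2\cdots i_m}$ with $j\in\{i_2,\cdots,i_m\}$: every out-neighbour of a vertex of $M$ again lies in $M$.

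Finally I would invoke the chain condition. Pick any $p\in M$; since $M\cap J(\mathcal{A})=\emptyset$ we have $p\notin J(\mathcal{A})$, so Definition \ref{def:3.7}(iii) supplies a walk $p = i^{1}\rightarrow i^{2}\rightarrow\cdots\rightarrow i^{k}$ in graph$\mathcal{A}$ with $i^{k}\in J(\mathcal{A})$. Propagating out-edge closure of $M$ along this walk gives $i^{1},i^{2},\cdots,i^{k}\in M$, so $i^{k}\in M\cap J(\mathcal{A})$, contradicting $M\cap J(\mathcal{A})=\emptyset$. Hence $\rho(\mathcal{B}) < s$ and $\mathcal{A}$ is an $M$-tensor. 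I expect the main obstacle to be precisely the passage from the weak estimate to the strict inequality: the real work is in showing the maximal set $M$ is simultaneously out-edge-closed and disjoint from $J(\mathcal{A})$, after which the walk hypothesis closes the argument; a secondary technical point is justifying the existence of a nonnegative Perron eigenvector for the possibly reducible nonnegative tensor $\mathcal{B}$.
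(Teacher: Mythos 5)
The paper does not actually prove this lemma: it is imported verbatim as Theorem~1 of \cite{Parsiad1}, so there is no in-paper argument to compare yours against. Judged on its own, your proof is correct and self-contained. The decomposition $\mathcal{A}=s\mathcal{I}-\mathcal{B}$ with $s\geq\max_{i}a_{i\cdots i}$, the reduction of the claim to the strict gap $\rho(\mathcal{B})<s$, and the maximum-principle propagation are exactly the right ingredients: at a maximal index $p$ the chain $a_{p\cdots p}\le r_p(\mathcal{A})\le a_{p\cdots p}$ collapses to equality, which correctly forces both $p\notin J(\mathcal{A})$ and $v_{i_t}=V$ for every index appearing in a nonzero off-diagonal entry $a_{p i_2\cdots i_m}$ (since a product of factors each at most $V$ can equal $V^{m-1}$ only if all factors equal $V$), so the maximal set $M$ is out-edge-closed in the directed graph of $\mathcal{A}$ and disjoint from $J(\mathcal{A})$; the walk supplied by Definition~\ref{def:3.7}(iii) then lands in $M\cap J(\mathcal{A})$, a contradiction. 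Two points should be made explicit rather than left implicit. First, the paper's definition of $M$-tensor requires $s>0$: this follows because Definition~\ref{def:3.7}(ii) gives some $i$ with $|a_{i\cdots i}|>r_i(\mathcal{A})\ge 0$, hence $\max_i a_{i\cdots i}>0$, so your choice of $s$ is automatically positive. Second, the existence of a nonnegative eigenvector attaining $\rho(\mathcal{B})$ for a possibly reducible nonnegative tensor is the Yang--Yang extension of the Perron--Frobenius theorem; you rightly flag it as the one nonelementary input, and it should be cited, since without it the contradiction has no eigenvector to work with. Compared with the cited source, whose framework (note the paper's use of Lemma~\ref{lem:3.8}, $\mathrm{graph}\,\mathcal{A}=\mathrm{graph}\,R(|\mathcal{A}|)$) suggests passing through the representation matrix $R(|\mathcal{A}|)$, your argument works directly at the tensor level, trading that reduction for the tensor Perron--Frobenius theorem.
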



It can be demonstrated that when tensor $\mathcal{A}$ exhibits weakly chained diagonally dominant characteristics, the corresponding tensor-generated matrix $A$ is diagonally dominant. Moreover, a non-zero element chain exists from $i \notin J(\mathcal{A})$ to $j \in J(\mathcal{A})$ for $i, j \in N$ (where $i \neq j$).

\begin{theorem}
Let $A \in \mathbb{C}^{n,n}$ be a tensor-generated matrix by tensor $\mathcal{A} \in \mathbb{C}^{[m \times n]}$. If the tensor-generated matrix $A$ is a diagonally dominant matrix with a non-zero element chain from $i \notin J(\mathcal{A})$ to $j \in J(\mathcal{A})$ for $i, j \in N$, then tensor $\mathcal{A}$ is a weakly chained diagonally dominant tensor. Furthermore, tensor $\mathcal{A}$ is an $H$-tensor.
\end{theorem}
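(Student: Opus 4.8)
The plan is to verify, one at a time, the three defining conditions of a weakly chained diagonally dominant tensor in Definition \ref{def:3.7}, and then to obtain the $H$-tensor property by reducing to the matrix case through Theorem \ref{HH}. The whole argument rests on two structural dictionaries between $\mathcal{A}$ and its generated matrix $A$. First, by the identity \eqref{beq2}, the strict matrix inequality $|a_{ii}|>P_i(A)$, that is $|a_{i\cdots i}|-s_{ii}(\mathcal{A})>\sum_{j\neq i}s_{ij}(\mathcal{A})$, is equivalent to $|a_{i\cdots i}|>r_i(\mathcal{A})$; hence the set of strictly dominant rows of $A$ coincides exactly with $J(\mathcal{A})$. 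Second, for $i\neq j$ the off-diagonal entry is $a_{ij}=s_{ij}(\mathcal{A})$, which is nonzero precisely when some off-diagonal entry $a_{ii_2\cdots i_m}\neq 0$ has $j\in\{i_2,\dots,i_m\}$; by the edge definition of graph$\mathcal{A}$ (together with Lemma \ref{lem:3.8}) this is exactly the condition $(i,j)\in E$. Thus the nonzero off-diagonal pattern of $A$ and the edge set of graph$\mathcal{A}$ coincide, so a nonzero element chain in $A$ is literally a walk in graph$\mathcal{A}$, and conversely.

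With these dictionaries the three conditions follow directly. Condition (i), the diagonal dominance of $\mathcal{A}$, is immediate from Theorem \ref{thedd} applied to the diagonally dominant matrix $A$. Condition (ii), nonemptiness of $J(\mathcal{A})$, holds because the hypothesis posits a chain terminating at some $j\in J(\mathcal{A})$, so $J(\mathcal{A})\neq\emptyset$. For condition (iii) I would take any $i^1\notin J(\mathcal{A})$; by the first dictionary $i^1$ is a non-strictly-dominant row of $A$, and the hypothesis furnishes a nonzero element chain in $A$ from $i^1$ to some $j\in J(\mathcal{A})$, which by the second dictionary is a walk $i^1\to\cdots\to j$ in graph$\mathcal{A}$ ending in $J(\mathcal{A})$. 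This establishes that $\mathcal{A}$ is weakly chained diagonally dominant.

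For the final assertion I would reduce to the matrix world. The hypotheses say exactly that $A$ is a weakly chained diagonally dominant matrix, which is a classical nonsingular $H$-matrix (the standard construction scales rows along the chains to produce a positive diagonal $D$ with $AD$ strictly diagonally dominant); applying Theorem \ref{HH} then yields that $\mathcal{A}$ is an $H$-tensor. An alternative route staying inside the tensor setting is to pass to the comparison $Z$-tensor obtained by replacing each diagonal entry by $|a_{i\cdots i}|$ and each off-diagonal entry by $-|a_{ii_2\cdots i_m}|$: it has the same absolute values, hence the same $r_i$, the same $J$ and the same graph, so it is again weakly chained diagonally dominant with nonnegative diagonals, and Lemma \ref{lem:3.9} makes it an $M$-tensor, which is equivalent to $\mathcal{A}$ being an $H$-tensor.

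I expect the main obstacle to be condition (iii): one must argue carefully that the purely matrix-theoretic notion of a nonzero element chain in $A$ translates faithfully into a walk in the tensor's directed graph graph$\mathcal{A}$, which hinges on the exact coincidence of the nonzero off-diagonal pattern of $A$ with the edge set $E$. Once that pattern equivalence is pinned down (using $a_{ij}=s_{ij}(\mathcal{A})$ together with the edge definition and Lemma \ref{lem:3.8}), and $J(\mathcal{A})$ is identified with the strictly dominant rows of $A$ via \eqref{beq2}, the remaining steps are routine applications of Theorem \ref{thedd}, Theorem \ref{HH}, and Lemma \ref{lem:3.9}.
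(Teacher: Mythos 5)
Your proposal is correct and follows essentially the same route as the paper: condition (i) via Theorem \ref{thedd}, condition (iii) via the identification of the nonzero off-diagonal pattern of $A$ with the edge set of graph$\,\mathcal{A}$ (Lemma \ref{lem:3.8}), and the $H$-tensor conclusion via Lemma \ref{lem:3.9}. If anything you are more careful than the paper, which applies Lemma \ref{lem:3.9} to $\mathcal{A}$ directly even though that lemma requires a $Z$-tensor; your detour through the comparison $Z$-tensor (or the alternative via Theorem \ref{HH}) closes that small gap.
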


\begin{proof}
Drawing upon Theorem \ref{thedd}, we promptly deduce (i) and (ii) from Definition \ref{def:3.7}. Furthermore, according to Lemma \ref{lem:3.8}, if the tensor-generated matrix $A$ has a non-zero element chain from $i \notin J(A)$ to $j \in J(A)$ for $i, j \in N$, then $\mathcal{A}$ also features a path $i^{1} \rightarrow i^{2} \rightarrow \cdots \rightarrow i^{k}$ within the graph $\mathcal{A}$ such that $i^{k} \in J(\mathcal{A})$, where $i^{1} = i$ and $i^{k} = j$. Consequently, tensor $\mathcal{A}$ aligns with the definition of weakly chained diagonally dominant tensors as per Definition \ref{def:3.7}. By virtue of Lemma \ref{lem:3.9}, tensor $\mathcal{A}$ qualifies as an $H$-tensor. This concludes the proof.
\end{proof}

In summary, when faced with strict inequality, weak irreducibility, or weak chains, we have the ability to convert the problem of determining an $H$-tensor into the problem of determining an $H$-matrix. This conversion provides us with the advantage of leveraging the well-established $H$-matrix theory to address more intricate problems involving $H$-tensors. As a result, the development of $H$-tensor theory is significantly advanced. 

\section{Application in Classicality for spin states}
\label{sec:quantum}

This section delves into the application of tensor-generated matrices in classical spin states. Leveraging the spectral properties of $H$-tensors from Section 3, we can establish adequate criteria for positive definite tensors. Utilizing the tensor representation of spin states and tensor-generated matrices in symmetric scenarios, we introduce criteria for separability (classicality) in spin-$j$ states.

\subsection{Classicality for spin states with symmetric coefficient tensors}
The literature referenced in \cite{2014T} introduces a tensorial representation for spin states, expanding a spin-$j$ density matrix $\rho$ as follows:
The $4^{m}$ Hermitian matrices $S_{\mu_{1} \ldots \mu_{m}}$ (with $m=2 j$ ) is defined as
$$
S_{\mu_{1} \cdots \mu_{m}}=P\left(\sigma_{\mu_{1}} \otimes \sigma_{\mu_{2}} \cdots  \otimes \sigma_{\mu_{m}}\right) P^{\dagger}, \quad 0 \leqslant \mu_{i} \leqslant 3,
$$
with $P$ being the projector onto the symmetric subspace of tensor products of $N$ spins- $\frac{1}{2}$ (the subspace spanned by Dicke states).
Here, $\sigma_{a}, 1 \leqslant a \leqslant 3$, be the usual Pauli matrices and $\sigma_{0}$ be the $2 \times$ 2 identity matrix as follows:
\begin{equation*}
  \sigma_1 = \begin{pmatrix} 0 & 1 \\ 1 & 0 \end{pmatrix},
\sigma_2 = \begin{pmatrix} 0 & -i \\ i & 0 \end{pmatrix},
\sigma_3 = \begin{pmatrix} 1 & 0 \\ 0 & -1 \end{pmatrix},
\sigma_0 = \begin{pmatrix} 1 & 0 \\ 0 & 1 \end{pmatrix}.
\end{equation*}

For a general spin-$j$, the 
$S_{\mu_{1}\mu_{2}\cdots \mu_{m}}$ provide an overcomplete basis over which $\rho$ can be
expanded, that is, any state can be expressed as
\begin{equation}\label{eqOc2}
  \rho=\frac{1}{2^{m}}\mathcal{A}_{\mu_{1}\mu_{2}\cdots \mu_{N}}S_{\mu_{1}\mu_{2}\cdots \mu_{m}},
\end{equation}
with coefficients
\begin{equation}\label{eqOc3}
  \mathcal{A}_{\mu_{1}\mu_{2}\cdots \mu_{m}}=tr(\rho S_{\mu_{1}\mu_{2}\cdots \mu_{m}}),
\end{equation}
real and invariant under permutation of the indices.
These $\mathcal{A}_{\mu_{1}\mu_{2}\cdots \mu_{m}}$ can be viewed as a symmetric order-$N$ tensor. Thus, we denote Equation \eqref{eqOc3} as the tensor representation of $\rho$.


The tensor corresponding to a spin coherent state $|\alpha\rangle$ aligned with the direction $\textbf{n}$ is represented as
\begin{equation*}
  \mathcal{A}_{\mu_{1}\mu_{2}\cdots \mu_{m}}=\langle\alpha|S_{\mu_{1}\mu_{2}\cdots \mu_{m}}|\alpha\rangle=n_{\mu_{1}}n_{\mu_{2}}\cdots n_{\mu_{m}},
\end{equation*}
where $n_{0}=1$ and $\textbf{n}=(n_{1}, n_{2}, n_{3})$. 

It is worth noting that the generalized Bloch representation \eqref{eqOc2} shares several crucial properties with the Bloch representation of a spin-$\frac{1}{2}$. From \eqref{eqOc3}, we observe that the coordinates of a coherent state are derived simply by multiplying the components of the 4-vector $n_{\mu}^{(i)}=(1,\textbf{n}^{(i)})$. This extends the concept that the Bloch vector representing a spin-$\frac{1}{2}$ state aligns with the directions defined by the angles of the coherent state. Consequently, we have $\mathbf{n}=(\sin \theta \cos \varphi, \sin \theta \sin \varphi, \cos \theta)\geq 0$, where $\theta \in[0, \frac{\pi}{2}]$ and $\varphi \in[0, \frac{\pi}{2}]$.

The concept of classicality defined in \eqref{eqclassical} can be reformulated using tensors.
A state is deemed classical if and only if there exist positive weights $\omega_{i}$ and unit vectors $\textbf{n}^{(i)}$ such that its tensor of coordinates $\mathcal{A}$ can be expressed as
\begin{equation}\label{eqw}
  \mathcal{A}
  =\sum\limits_{i}\omega_{i}n_{\mu_{1}}^{(i)}n_{\mu_{2}}^{(i)}\cdots n_{\mu_{m}}^{(i)}=\sum\limits_{i=1}^{r}\omega_{i}\big(n_{\mu}^{(i)}\big)^{\otimes m},
\end{equation}
where $n_{\mu}^{(i)}=(1,\textbf{n}^{(i)})$. It is evident that $\mathcal{A}$ is a symmetric tensor of rank $r$.


Within Section \ref{sec:main}, when strict inequality, weak irreducibility, or weak chain conditions are present, the calculation of the $H$-tensor can be streamlined to computing the $H$-matrix.
This streamlined approach allows us to apply the well-established $H$-matrix theory to address more intricate $H$-tensor challenges, thereby significantly advancing the realm of $H$-tensor theory. Notably, we can leverage the spectral characteristics of $H$-tensors to establish sufficient conditions for positive semidefinite tensors.

\begin{lemma}\label{Hmatrix-Htensor}
Consider $\mathcal{A} \in \mathbb{C}^{[m\times n]}$ as an even-order real symmetric tensor where $a_{kk\cdots k}\geq 0$ for all $k\in \mathbb{N}$, and let $A\in \mathbb{C}^{n,n}$ represent the tensor-generated matrix by the tensor $\mathcal{A}$. If matrix $A$ qualifies as an $H$-matrix, it implies that the tensor $\mathcal{A}$ is positive semidefinite.
\end{lemma}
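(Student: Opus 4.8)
The plan is to combine Theorem \ref{HH} with a symmetry-preserving diagonal rescaling that converts the $H$-tensor property into genuine strict diagonal dominance, and then to estimate the resulting homogeneous form by the same arithmetic--geometric mean device already used in the proof of Theorem \ref{HH}. First I would invoke Theorem \ref{HH}: since the tensor-generated matrix $A$ is an $H$-matrix, $\mathcal{A}$ is an $H$-tensor, so by \eqref{def3.1} there is an entrywise positive vector $x=(x_1,\dots,x_n)^{\top}$ with
\[
|a_{i\cdots i}|x_i^{m-1}>\sum_{\delta_{ii_2\cdots i_m}=0}|a_{ii_2\cdots i_m}|x_{i_2}\cdots x_{i_m},\qquad i\in N.
\]
A useful preliminary remark is that the right-hand side is nonnegative, so a vanishing diagonal entry would make this strict inequality impossible; hence $a_{i\cdots i}\neq0$, and together with the hypothesis $a_{i\cdots i}\geq 0$ this gives $a_{i\cdots i}>0$ for every $i$.

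Next I would introduce the fully symmetric rescaled tensor $\mathcal{B}=(b_{i_1\cdots i_m})$ defined by $b_{i_1 i_2\cdots i_m}=x_{i_1}x_{i_2}\cdots x_{i_m}\,a_{i_1 i_2\cdots i_m}$. Since $\mathcal{A}$ is symmetric and the weight $x_{i_1}\cdots x_{i_m}$ is symmetric in its indices, $\mathcal{B}$ is again an even-order real symmetric tensor, with positive diagonal $b_{i\cdots i}=a_{i\cdots i}x_i^{m}>0$. Factoring $x_i$ out of the off-diagonal row sum and applying the displayed inequality gives $r_i(\mathcal{B})=x_i\sum_{\delta_{ii_2\cdots i_m}=0}|a_{ii_2\cdots i_m}|x_{i_2}\cdots x_{i_m}<a_{i\cdots i}x_i^{m}=b_{i\cdots i}$, so $\mathcal{B}$ is strictly diagonally dominant in the sense of Definition \ref{defd2}. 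Moreover the substitution $y_i=x_iz_i$ gives $\mathcal{A}y^{m}=\mathcal{B}z^{m}$, and as each $x_i>0$ this is a bijection of $\mathbb{R}^n$; hence $\mathcal{A}$ is positive semidefinite if and only if $\mathcal{B}z^m\geq 0$ for all $z$.

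Finally I would show that an even-order symmetric strictly diagonally dominant tensor $\mathcal{B}$ with positive diagonal satisfies $\mathcal{B}z^m\geq 0$. Splitting $\mathcal{B}z^m$ into its diagonal part $\sum_i b_{i\cdots i}z_i^m$ and the remaining monomials, I bound the off-diagonal terms by AM--GM, $|z_{i_1}|\cdots|z_{i_m}|\leq\frac{1}{m}\sum_{t=1}^m|z_{i_t}|^m$, and then use the symmetry of $|\mathcal{B}|$ to relabel indices: each of the $m$ positions contributes identically, so the averaged bound collapses to $\sum_{i}\big(\sum_{\delta_{ii_2\cdots i_m}=0}|b_{ii_2\cdots i_m}|\big)|z_i|^m=\sum_i r_i(\mathcal{B})|z_i|^m$. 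Since $m$ is even, $z_i^m=|z_i|^m$, and therefore
\[
\mathcal{B}z^{m}\geq \sum_{i}\big(b_{i\cdots i}-r_i(\mathcal{B})\big)|z_i|^m\geq 0,
\]
with strict positivity when $z\neq0$. This yields $\mathcal{B}z^m\geq0$ for all $z$, hence $\mathcal{A}$ is positive semidefinite.

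I expect the main obstacle to lie in the bookkeeping of the last step, namely justifying cleanly that, after the AM--GM bound, the full symmetry of $\mathcal{B}$ permits replacing the symmetric average $\frac{1}{m}\sum_t|z_{i_t}|^m$ by the single anchor term $|z_{i_1}|^m$, so that the negative contribution reorganizes \emph{exactly} into $\sum_i r_i(\mathcal{B})|z_i|^m$ rather than into column-type sums such as $Q_i(A)$. This is the tensor analogue of the index regrouping already performed in \eqref{eq3.1}--\eqref{beq2}, and it is precisely the symmetry of $\mathcal{B}$ (inherited from that of $\mathcal{A}$) that makes the collapse valid.
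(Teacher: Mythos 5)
Your proof is correct, but it takes a genuinely more self-contained route than the paper. The paper states Lemma \ref{Hmatrix-Htensor} without an explicit proof: judging from the surrounding text (``we can leverage the spectral characteristics of $H$-tensors\dots''), the intended argument is Theorem \ref{HH} ($A$ an $H$-matrix $\Rightarrow$ $\mathcal{A}$ an $H$-tensor) followed by the known spectral fact from the $H$-tensor literature \cite{LI20141} that an even-order real symmetric $H$-tensor with nonnegative diagonal entries is positive (semi)definite, which itself rests on Qi's characterization of positive semidefiniteness via nonnegativity of all $H$-eigenvalues. You instead reprove that second ingredient from scratch: after the same appeal to Theorem \ref{HH}, you note that the strict inequality \eqref{def3.1} forces $a_{i\cdots i}>0$, pass to the symmetrically rescaled tensor $\mathcal{B}$ with $b_{i_1\cdots i_m}=x_{i_1}\cdots x_{i_m}a_{i_1\cdots i_m}$, verify that $\mathcal{B}$ is strictly diagonally dominant with positive diagonal, and bound $\mathcal{B}z^m\ge\sum_i\bigl(b_{i\cdots i}-r_i(\mathcal{B})\bigr)|z_i|^m$ via AM--GM plus the permutation invariance of $|\mathcal{B}|$. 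The step you flag as delicate is indeed fine: the index set $\{\delta_{i_1\cdots i_m}=0\}$ is permutation-invariant and $\mathcal{A}$ is fully symmetric, so for each position $t$ the sum $\sum|b_{i_1\cdots i_m}|\,|z_{i_t}|^m$ equals its $t=1$ counterpart, and the average collapses exactly to $\sum_i r_i(\mathcal{B})|z_i|^m$ with no column-type sums appearing. Your argument in fact yields positive definiteness, slightly more than claimed. The paper's route buys brevity and consistency with the spectral viewpoint reused in Section 5; yours buys an elementary, citation-free verification of $f(x)\ge 0$ that never invokes the $H$-eigenvalue characterization.
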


In this context, we introduce the criterion for determining the classicality of spin states, drawing from Lemma \ref{Hmatrix-Htensor} and the relationship established in Section \ref{sec:main}.

\begin{theorem}
For a spin-$j$ state, if its representation tensor $\mathcal{A}_{\mu_{1}\cdots \mu_{2j}} \in \mathbb{R}^{[2j\times 4]}$ is a real tensor with $a_{kk\cdots k}\geq 0$ for all $k\in \{1,2,3,4\}$, and $A\in \mathbb{C}^{4,4}$ is a tensor-generated matrix by tensor $\mathcal{A}$. If matrix $A$ is a $H$-matrix, the tensor $\mathcal{A}$ is positive semidefinite. In other words, spin-$j$ state is classical.
\end{theorem}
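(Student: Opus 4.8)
The plan is to read this statement as the specialization of Lemma~\ref{Hmatrix-Htensor} to the data of the spin-$j$ tensor representation, and then to reinterpret the resulting positive semidefiniteness as classicality through the decomposition~\eqref{eqw}.

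First I would check that the representation tensor meets every hypothesis of Lemma~\ref{Hmatrix-Htensor}. Since the indices satisfy $0\le\mu_i\le 3$, the tensor $\mathcal{A}_{\mu_1\cdots\mu_{2j}}$ has dimension $n=4$ and order $m=2j$; the entries $\mathcal{A}_{\mu_1\cdots\mu_m}=\operatorname{tr}(\rho S_{\mu_1\cdots\mu_m})$ of \eqref{eqOc3} are real and invariant under index permutations, so $\mathcal{A}$ is a real symmetric tensor, and the hypothesis supplies $a_{kk\cdots k}\ge 0$. I would flag one restriction explicitly: Lemma~\ref{Hmatrix-Htensor} is stated for even-order tensors, so the reduction requires $m=2j$ to be even, i.e. integer $j$; I would record this at the outset.

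With the hypotheses verified and the tensor-generated matrix $A\in\mathbb{C}^{4,4}$ assumed to be an $H$-matrix, Lemma~\ref{Hmatrix-Htensor} applies directly and yields that $\mathcal{A}$ is positive semidefinite, that is $\mathcal{A}x^{m}\ge 0$ for every $x\in\mathbb{R}^{4}$. Equivalently one may route the argument through Theorem~\ref{HH}: the $H$-matrix $A$ forces $\mathcal{A}$ to be an $H$-tensor, and an even-order real symmetric $H$-tensor with nonnegative diagonal is positive semidefinite. Either route is routine once the dictionary of Section~\ref{sec:main} is in place.

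The step I expect to carry the real weight is the concluding clause ``in other words, the spin-$j$ state is classical,'' i.e. producing a decomposition of the form \eqref{eqw}, $\mathcal{A}=\sum_i\omega_i\big(n_\mu^{(i)}\big)^{\otimes m}$ with $\omega_i\ge 0$ and $n_\mu^{(i)}=(1,\mathbf{n}^{(i)})$. I would invoke the correspondence developed around \eqref{eqw} between positive semidefinite representation tensors and the convex hull of coherent states, using the normalization $n_0=1$ together with the nonnegativity convention $\mathbf{n}\ge 0$ adopted for the coherent-state directions in this section to identify the positive semidefinite tensor with a fully separable symmetric (classical) state. The difficulty here is genuine: for $m\ge 4$, positive semidefiniteness of a symmetric tensor need not by itself guarantee a nonnegatively weighted rank-one (coherent-state) decomposition, so this final implication must exploit the particular $n=4$ Bloch-type structure and the nonnegativity conventions rather than any generic ``positive semidefinite implies classical'' principle.
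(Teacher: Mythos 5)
Your first step coincides exactly with the paper's: verify that the representation tensor of \eqref{eqOc3} is real, symmetric, of dimension $4$ and order $m=2j$, with nonnegative diagonal by hypothesis, and then apply Lemma~\ref{Hmatrix-Htensor} to conclude that $\mathcal{A}$ is positive semidefinite. Your remark that this forces $m=2j$ to be even (integer $j$) is the same restriction the paper invokes, and the alternative routing through Theorem~\ref{HH} is just the content of that lemma unpacked. Up to this point there is no difference in approach.

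Where you diverge from the paper is in your honesty about the last clause. You correctly identify that the substantive claim is the passage from positive semidefiniteness of $\mathcal{A}$ to a decomposition of the form \eqref{eqw} with nonnegative weights, and you correctly observe that for $m\ge 4$ positive semidefiniteness of a symmetric tensor does not in general yield a nonnegative rank-one decomposition, so some additional structural input is required. You should be aware that the paper's own proof does not supply that input either: its second paragraph merely checks that a tensor already written in the form \eqref{eqw} satisfies $\mathcal{A}x^{m}\ge 0$, which is the \emph{converse} implication (classical $\Rightarrow$ PSD), and then asserts classicality. So the gap you flagged is genuine and is present in the published argument as well; to close it one would need a result identifying PSD (or, more plausibly, completely positive / hierarchically dominated, as in the strongly symmetric case treated afterwards via \cite{stronglysymmetric}) representation tensors with convex combinations of coherent-state tensors $\big(n_\mu^{(i)}\big)^{\otimes m}$, and no such result is cited or proved for the merely symmetric case. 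Your proposal is therefore as complete as the paper's proof on the first implication and correctly diagnoses the missing step on the second; neither argument establishes the final ``in other words'' claim.
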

\begin{proof}
Given a spin-$j$ state $\rho$, there exists a corresponding coefficient tensor $\mathcal{A}_{\mu_{1}\mu_{2}\cdots \mu_{m}}=tr(\rho S_{\mu_{1}\mu_{2}\cdots \mu_{m}})$ as shown in \eqref{eqOc3}. A spin-$j$ coherent state can be seen as a symmetrized tensor product of $2j$ spins $\frac{1}{2}$ coherent states, and thus it can be written with representation tensor $\mathcal{A}_{\mu_{1}\cdots \mu_{2j}} \in \mathbb{R}^{[2j\times 4]}.$

If the representation tensor $\mathcal{A}_{\mu_{1}\cdots \mu_{2j}} \in \mathbb{R}^{[2j\times 4]}$ is a real tensor with $a_{kk\cdots k}\geq 0$ for all $k\in \{1,2,3,4\}$, and the resulting tensor-generated matrix $A\in \mathbb{C}^{4,4}$ is an $H$-matrix, then by Lemma \ref{Hmatrix-Htensor}, we can conclude that the representation tensor $\mathcal{A}_{\mu_{1}\cdots \mu_{2j}} \in \mathbb{R}^{[2j\times 4]}$ is positive semidefinite.

When $j$ is an integer (thus $m=2j$ is even), the right-hand side of equation \eqref{eqw} is consistently positive. For any $x\in \mathbb{R}^{n}$, it follows that
\begin{equation*}
  \mathcal{A}x^{m}=\sum\limits_{i}\omega_{i}n_{\mu_{1}}^{(i)}n_{\mu_{2}}^{(i)}\cdots n_{\mu_{m}}^{(i)}x_{\mu_{1}}x_{\mu_{2}}\cdots x_{\mu_{m}}=\sum\limits_{i=1}^{r}\omega_{i}\left[\left(n_{\mu}^{(i)}\right)^{\top} x\right]^{\otimes m}\geq 0,
\end{equation*}
illustrating the positive semidefiniteness of the even-order tensor $\mathcal{A}$. Consequently, we have demonstrated that the spin-$j$ state adheres to classicality under condition \eqref{eqw}. The proof is completed.
\end{proof}

It is worth noting that when $j$ is an integer, all expansion coefficients correspond to tensors that are fourth-order tensors of even order. This not only establishes a theoretical foundation for deriving the aforementioned criterion using even-order semi-positive definite tensors but also preserves the physical significance of all tensor elements. Specifically, the components of all 4-dimensional vectors $n_{\mu}^{(i)}=(1,\textbf{n}^{(i)})$ equate to 1 and correspond to the polar coordinates of points $\textbf{n}^{(i)}$ on the Bloch sphere.

\subsection{Classicality for spin states with strongly symmetric coefficient tensors}


Quantum tasks are significantly streamlined in symmetric scenarios due to the restriction of optimization to symmetrically separable states. Yet, dealing with the complexity of multipartite states remains a formidable challenge. Addressing these challenges requires exploring quantum states existing within a smaller space compared to symmetric states. A recent contribution by Xiong et al. \cite{xiong2023multipartite} introduces the notion of multipartite strongly symmetric states, representing a subspace within symmetric states.

\begin{definition}{\rm\cite{xiong2023multipartite,stronglysymmetric}}\label{stronglysymmetric}
For $\mathcal{A}=(a_{i_1i_2\cdots i_m}) \in \mathbb{R}^{[m\times n]}$, we denote
\begin{equation*}
  \mathcal{N}=\{(i_1 i_2\cdots i_m): 1\leq i_k \leq n, k=1,2,\cdots,m \}.
\end{equation*}
For $(i_1i_2\cdots i_m)\in \mathcal{N}$, denote by $[(i_1 i_2\cdots i_m)]$ the set of all distinct members in $\{i_1 i_2\cdots i_m\}$.
Let $(i_1 i_2\cdots i_m)\in \mathcal{N}$, $(j_1 j_2\cdots j_m)\in \mathcal{N}$. We say that $(i_1 i_2\cdots i_m)$ is similar to $(j_1 j_2\cdots j_m)$, if $[(i_1 i_2\cdots i_m)]=[(j_1 j_2\cdots j_m)]$, denote by $(i_1 i_2\cdots i_m)\sim (j_1 j_2\cdots j_m)$. We call $\mathcal{A}$ is strongly symmetric, if for any $(i_1 i_2\cdots i_m), (j_1 j_2\cdots j_m)\in \mathcal{N}$,  $(i_1 i_2\cdots i_m)$ $\sim (j_1 j_2\cdots j_m)$, it holds that $a_{i_1i_2\cdots i_m}=a_{j_1j_2\cdots j_m}$.
If $m$-multipartite state $|\Phi\rangle$ is corresponding to a strongly symmetric tensor $\mathcal{A}_{|\Phi\rangle}$.
We denote all $m$-multipartite strongly symmetric state of $n$ qudits as $SS^{m}(\mathbb{R}^{n})$.
\end{definition}

It is important to highlight that a real strongly symmetric tensor is symmetric, with the exception of when $m=2$. Subsequently, we demonstrate that the symmetry observed in the tensor-generated matrix from the tensor aligns with that strong symmetry of the original tensor.

\begin{theorem}\label{doublesymmetric}
Let $A\in \mathbb{C}^{n,n}$ be a tensor-generated matrix by tensor $\mathcal{A} \in \mathbb{C}^{[m\times n]}$.  If the tensor $\mathcal{A}$ is strongly symmetric, if and only if the tensor-generated matrix $A$ is symmetric.
\end{theorem}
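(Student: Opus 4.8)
The plan is to reduce the claimed equivalence to a comparison of the off-diagonal deleted row sums $s_{ij}(\mathcal{A})$. Since the diagonal entries of $A$ sit on its diagonal, symmetry of $A$ is equivalent to $s_{ij}(\mathcal{A}) = s_{ji}(\mathcal{A})$ for every pair $i \neq j$, so the entire problem is to relate these scalar identities to the strong-symmetry relations $a_{i_1\cdots i_m} = a_{j_1\cdots j_m}$ for similar index tuples. I would first rewrite $s_{ij}(\mathcal{A})$ by interchanging the order of summation, so that each off-diagonal entry $|a_{ii_2\cdots i_m}|$ (with leading index $i$) is counted with weight equal to the number of positions $k \geq 2$ at which $i_k = j$ — that is, the multiplicity of $j$ among $i_2,\dots,i_m$, divided by $m-1$.

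For the forward direction (strong symmetry of $\mathcal{A}$ implies symmetry of $A$) I would group the tensor entries by their distinct-index set. Write $\Omega_S$ for the set of tuples $(i_1,\dots,i_m)$ whose set of distinct entries is exactly $S$; strong symmetry says every entry indexed by a tuple in $\Omega_S$ shares one common modulus, call it $|a_S|$. Collecting terms gives $s_{ij}(\mathcal{A}) = \frac{1}{m-1}\sum_{S\supseteq\{i,j\}} |a_S|\, W_{ij}(S)$, where $W_{ij}(S)$ is the multiplicity of $j$ in $\omega$ summed over all $\omega\in\Omega_S$ with $\omega_1=i$. The key step is the observation that the transposition $\tau=(i\,j)$ acting on the index alphabet is an involution of $\Omega_S$ (because $i,j\in S$ forces $S$ to be $\tau$-invariant); it carries $\{\omega_1=i\}$ bijectively onto $\{\omega_1=j\}$ and turns the multiplicity of $j$ into the multiplicity of $i$. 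Hence $W_{ij}(S)=W_{ji}(S)$ for every $S$, and summing over $S$ yields $s_{ij}(\mathcal{A})=s_{ji}(\mathcal{A})$, i.e. $A$ is symmetric. A shorter route is also available: strong symmetry implies ordinary permutation symmetry, and a one-line multinomial count then shows that each multiset contributes equally to $s_{ij}$ and $s_{ji}$.

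The converse is where I expect the real difficulty, and I would treat it with caution. The natural plan is to read off the $\binom{n}{2}$ scalar equations $s_{ij}(\mathcal{A})=s_{ji}(\mathcal{A})$ and invert them into the entrywise relations defining strong symmetry. The obstacle is that $A$ records only aggregated sums of absolute values, so both sign information and the finer entrywise data are discarded, and many genuinely different tensors produce the same $A$. Indeed, the multinomial bookkeeping above shows that \emph{every} permutation-symmetric tensor already yields a symmetric $A$, so symmetry of $A$ cannot by itself separate strong symmetry from ordinary symmetry. I would therefore expect the reverse implication to hold only under additional hypotheses — for instance restricting to nonnegative tensors and strengthening the generated object so that it retains individual entries rather than their row sums — and pinning down the precise conditions under which it is valid (or recognizing that only the forward implication survives in general) is the main obstacle.
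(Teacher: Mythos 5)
Your forward direction is correct and is essentially the paper's argument made rigorous: the paper asserts $s_{ij}(\mathcal{A})=s_{ji}(\mathcal{A})$ by an unexplained relabelling of the summation, while your involution $\tau=(i\,j)$ on the tuples (or the multinomial count) actually justifies it; as you observe, ordinary permutation symmetry already suffices for this half, and indeed the paper's own necessity proof only ever invokes ordinary symmetry. Your caution about the converse is also well placed: the gap is in the paper's proof, not in your proposal. The paper's sufficiency argument passes from the equality of the two aggregated sums to the termwise identity $|a_{i i_2\cdots i_{k-1} j i_{k+1}\cdots i_m}|=|a_{j i_2\cdots i_{k-1} i i_{k+1}\cdots i_m}|$, which is precisely the illegitimate inversion you predicted; and even termwise equality of moduli would not give the equality of signed (or complex) entries that Definition~\ref{stronglysymmetric} demands.

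A concrete counterexample confirms your suspicion. Take $m=3$, $n=2$ and the real symmetric tensor with $a_{111}=a_{222}=1$, $a_{112}=a_{121}=a_{211}=\alpha$, $a_{122}=a_{212}=a_{221}=\beta$, where $\alpha,\beta\geq 0$ and $\alpha\neq\beta$. A direct computation gives $s_{12}(\mathcal{A})=s_{21}(\mathcal{A})=\alpha+\beta$, so the generated matrix $A$ is symmetric; but $(1,1,2)\sim(1,2,2)$ while $a_{112}=\alpha\neq\beta=a_{122}$, so $\mathcal{A}$ is not strongly symmetric. Hence only the implication ``$\mathcal{A}$ strongly symmetric $\Rightarrow$ $A$ symmetric'' survives (and it already holds for merely symmetric $\mathcal{A}$); the ``if and only if'' in the statement is false as written, and your assessment that the converse requires retaining finer data than the row sums $s_{ij}$ is exactly right.
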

\begin{proof}
Necessity: Since $\mathcal{A}$ is a  symmetric tensor. Then it holds that $a_{i_1i_2\cdots i_m}=a_{j_1j_2\cdots j_m}$ for
$(i_1 i_2\cdots i_m)$, $(j_1 j_2\cdots j_m)\in \mathcal{N}$,  $(i_1 i_2\cdots i_m)= \pi(j_1 j_2\cdots j_m)$.
For the tensor-generated matrix $A\in \mathbb{C}^{n,n}$, it follows that
\begin{equation*}
\begin{split}
  s_{ij}(\mathcal{A}) &=\frac{1}{m-1}\sum\limits_{k=2}^{m}\sum\limits_{\substack{i_t \in N, t\in N\setminus \{1,k\}, i_k=j,\\ \delta_{i i_2 \cdots i_m} =0}}|a_{i i_2\cdots i_m}|\\
  &=\frac{1}{m-1}\sum\limits_{k=2}^{m}\sum\limits_{\substack{i_t \in N, t\in N\setminus \{1,k\}, i_k=i,\\ \delta_{j i_2 \cdots i_m} =0}}|a_{j i_2\cdots i_m}|\\
  &=s_{ji}(\mathcal{A}).
\end{split}
\end{equation*}
This shows that the tensor-generated matrix $A$ is symmetric.

Sufficiency: The tensor-generated matrix $A\in \mathbb{C}^{n,n}$ is symmetric, yields
\begin{equation*}
  \sum\limits_{\substack{i_t \in N, t\in N\setminus \{1,k\}, i_k=j,\\ \delta_{i i_2 \cdots i_m} =0}}|a_{i i_2\cdots i_m}|=
  \sum\limits_{\substack{i_t \in N, t\in N\setminus \{1,k\}, i_k=i,\\ \delta_{i i_2 \cdots i_m} =0}}|a_{j i_2\cdots i_m}|.
\end{equation*}
In other word, one has
\begin{equation*}
  |a_{i i_2\cdots i_{k-1} j i_{k+1} \cdots i_m}|=|a_{j i_2\cdots i_{k-1} i i_{k+1}\cdots i_m}|, i_t \in N, t\in N\setminus \{1,k\},
\end{equation*}
which indicates that tensor $\mathcal{A}$ is strong symmetric. The proof is completed.
\end{proof}

Subsequently, we introduce the criterion for determining the classicality of spin states with strongly symmetric coefficient tensors.

\begin{theorem}
For a spin-$j$ state, if its representation tensor $\mathcal{A}_{\mu_{1}\cdots \mu_{2j}} \in \mathbb{R}^{[2j\times 4]}$ is a strongly symmetric tensor, and tensor-generated matrix $A\in \mathbb{C}^{4,4}$ is a $H$-tensor with $|a_{ii\cdots i}|\geq s_{ii}(\mathcal{A})$. In this situation, spin-$j$ state is symmetric classical.
\end{theorem}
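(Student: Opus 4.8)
The plan is to certify the positive semidefiniteness of the representation tensor through its tensor-generated matrix, using strong symmetry to supply the structural hypotheses, and then to pass from positive semidefiniteness to classicality by the preceding criterion. First I would apply Theorem \ref{doublesymmetric}: since the representation tensor $\mathcal{A}_{\mu_1\cdots\mu_{2j}}$ is strongly symmetric, its tensor-generated matrix $A\in\mathbb{C}^{4,4}$ is symmetric. Because $m=2j$ is even (integer $j$) and a real strongly symmetric tensor of order $m>2$ is symmetric, as noted just before Theorem \ref{doublesymmetric}, the tensor $\mathcal{A}$ is an even-order real symmetric tensor. The hypothesis $|a_{ii\cdots i}|\geq s_{ii}(\mathcal{A})$ forces the diagonal entries $a_{ii}=|a_{ii\cdots i}|-s_{ii}(\mathcal{A})$ of the symmetric $H$-matrix $A$ to be non-negative, which, together with the diagonal non-negativity $a_{kk\cdots k}\geq 0$ of the coefficient tensor valid in the spin-state framework (for instance $a_{0\cdots 0}=\operatorname{tr}(\rho P)=1$ since $\rho$ is supported on the symmetric subspace), places us exactly in the setting of Lemma \ref{Hmatrix-Htensor}.

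Then I would invoke Lemma \ref{Hmatrix-Htensor} to conclude that $\mathcal{A}$ is positive semidefinite, and finally apply the preceding classicality theorem, whose argument shows that an even-order positive semidefinite coefficient tensor of a spin-$j$ state admits a decomposition of the form \eqref{eqw}. This yields that the spin-$j$ state is classical; the strong symmetry of $\mathcal{A}$, equivalently the symmetry of $A$ via Theorem \ref{doublesymmetric}, confines the decomposition to the strongly symmetric framework $SS^{m}(\mathbb{R}^4)$, so the state is symmetric classical. Structurally this mirrors the proof of the previous theorem, with the single new ingredient being Theorem \ref{doublesymmetric}, which upgrades ``symmetric'' to ``strongly symmetric'' by transferring symmetry to the generated matrix $A$.

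The hard part will be the passage from positive semidefiniteness of the coefficient tensor to the genuine classical decomposition \eqref{eqw}: for order $m=2j\geq 4$ the cone of positive semidefinite symmetric tensors strictly contains the cone of non-negative combinations of $m$-th powers $\big(n_\mu^{(i)}\big)^{\otimes m}$, so the bare inequality $\mathcal{A}x^{m}\geq 0$ does not a priori furnish positive weights $\omega_i$ and unit directions $\mathbf{n}^{(i)}$. Bridging this gap relies on the physical constraints carried by $\mathcal{A}_{\mu_1\cdots\mu_m}=\operatorname{tr}(\rho S_{\mu_1\cdots\mu_m})$ and the restriction enforced by strong symmetry, exactly as in the preceding theorem. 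A secondary point to verify is that $a_{kk\cdots k}\geq 0$ holds for every diagonal tensor entry, not merely for the matrix diagonal $a_{ii}$, which I would justify from the explicit form of $S_{\mu\cdots\mu}$ and the positivity of $\rho$.
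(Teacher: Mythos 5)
Your proposal follows essentially the same route as the paper: Theorem \ref{doublesymmetric} gives symmetry of $A$, the hypothesis $|a_{ii\cdots i}|\geq s_{ii}(\mathcal{A})$ gives nonnegative diagonal entries so that the symmetric $H$-matrix $A$ is positive semidefinite, Lemma \ref{Hmatrix-Htensor} then yields positive semidefiniteness of the representation tensor, and symmetric classicality is concluded exactly as in the preceding theorem. The gap you flag at the end --- that positive semidefiniteness of the coefficient tensor does not by itself furnish the decomposition \eqref{eqw} into nonnegative combinations of coherent-state powers --- is genuine, but the paper's own proof leaves it equally unaddressed, so you are not missing any step that the paper actually supplies.
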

\begin{proof}
According to Theorem \ref{doublesymmetric}, the tensor-generated matrix $A\in \mathbb{C}^{4,4}$ exhibits symmetry. If the tensor-generated matrix $A\in \mathbb{C}^{4,4}$ is an $H$-tensor with $|a_{ii\cdots i}|\geq s_{ii}(\mathcal{A})$, then the matrix $A$ possesses nonnegative diagonal elements, indicating its positive semidefiniteness. Drawing on Lemma \ref{Hmatrix-Htensor}, we deduce that the representation tensor $\mathcal{A}_{\mu_{1}\cdots \mu_{2j}} \in \mathbb{R}^{[2j\times 4]}$ is positive semidefinite. Consequently, the spin-$j$ state demonstrates symmetric classical behavior.
\end{proof}

The authors in \cite{stronglysymmetric} demonstrate that a strongly symmetric hierarchically dominated nonnegative tensor is equivalent to a completely positive tensor. Moreover, they specify that a completely positive tensor of even order $m$ is a positive semidefinite tensor, i.e.,
\begin{equation*}
  \mathcal{A} x^{m}=\sum_{k=1}^{r}\left(u^{(k)}\right)^{\otimes m} x^{m}=\sum_{k=1}^{r}\left[\left(u^{(k)}\right)^{\top} x\right]^{\otimes m} \geq 0, \quad \forall x\in \mathbb{R}^{n}.
\end{equation*}

In essence, determining that the coefficient tensor $\mathcal{A}$ of a strongly symmetric order-$m$ is either a strongly symmetric hierarchically dominated nonnegative tensor or a completely positive tensor implies the classical nature of the spin-$j$ state.

\begin{corollary}
For a spin-$j$ state, if its representation tensor $\mathcal{A}_{\mu_{1}\cdots \mu_{2j}} \in \mathbb{R}^{[2j\times 4]}$ is a strongly symmetric hierarchically dominated nonnegative tensor, then tensor-generated matrix $A\in \mathbb{C}^{4,4}$ is a symmetric $H$-tensor and a copositive matrix. This shows spin-$j$ state is symmetric classical, i.e,
\begin{equation*}
  \mathcal{A} =\sum_{k=1}^{r}\left(u^{(k)}\right)^{\otimes m}.
\end{equation*}
\end{corollary}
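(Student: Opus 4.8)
The plan is to assemble the conclusion from three ingredients already established: Theorem~\ref{doublesymmetric}, the equivalence between strongly symmetric hierarchically dominated nonnegative tensors and completely positive tensors taken from \cite{stronglysymmetric}, and the tensorial formulation of classicality in \eqref{eqw}. First I would dispose of the symmetry assertion by a direct appeal to Theorem~\ref{doublesymmetric}: since the representation tensor $\mathcal{A}_{\mu_1\cdots\mu_{2j}}$ is strongly symmetric, its tensor-generated matrix $A\in\mathbb{C}^{4,4}$ is symmetric, with no further computation required.

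Next I would use the cited equivalence to produce a completely positive decomposition $\mathcal{A}=\sum_{k=1}^{r}\bigl(u^{(k)}\bigr)^{\otimes m}$ with every $u^{(k)}\geq 0$. Because $m=2j$ is even, the recalled fact that an even-order completely positive tensor is positive semidefinite applies; more to the point, this decomposition is already of the classical form \eqref{eqw} once the weights are absorbed into the nonnegative vectors by writing $u^{(k)}=\omega_k^{1/m}n_\mu^{(k)}$. Hence $\mathcal{A}$ meets the classicality criterion and the spin-$j$ state is symmetric classical, yielding the final displayed identity.

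For the two matrix-level claims I would argue straight from the entries of $A$ in \eqref{matrix A}. The off-diagonal entries $a_{ij}=s_{ij}(\mathcal{A})$ are sums of moduli, hence nonnegative, while the hierarchical-domination together with nonnegativity forces $|a_{ii\cdots i}|\geq s_{ii}(\mathcal{A})$, so the diagonal entries $a_{ii}=|a_{ii\cdots i}|-s_{ii}(\mathcal{A})$ are nonnegative as well. Thus $A$ is entrywise nonnegative, and for any $x\geq 0$ the quadratic form $x^{\top}Ax=\sum_i a_{ii}x_i^2+\sum_{i\neq j}s_{ij}(\mathcal{A})x_ix_j$ is a sum of nonnegative terms, giving copositivity. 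For the $H$-matrix property I would convert tensor diagonal dominance into matrix diagonal dominance through identity \eqref{beq2}: since $r_i(\mathcal{A})=s_{ii}(\mathcal{A})+\sum_{j\neq i}s_{ij}(\mathcal{A})$, the dominance $|a_{ii\cdots i}|\geq r_i(\mathcal{A})$ is equivalent to $|a_{ii}|\geq P_i(A)$, so $A$ is (strictly or irreducibly) diagonally dominant and therefore an $H$-matrix by Lemma~\ref{Hmat}.

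I expect the main obstacle to be the precise justification that the hierarchically dominated nonnegative structure simultaneously delivers the diagonal nonnegativity $|a_{ii\cdots i}|\geq s_{ii}(\mathcal{A})$ and the full diagonal dominance needed for the $H$-matrix conclusion; this forces one to unpack the definition of hierarchical domination from \cite{stronglysymmetric} rather than use it as a black box. A secondary subtlety is reconciling the completely positive decomposition, whose vectors $u^{(k)}$ are arbitrary nonnegative $4$-vectors, with the spin-state normalization $n_\mu^{(i)}=(1,\mathbf{n}^{(i)})$; I would handle this by invoking the trace constraint $\mathcal{A}_{0\cdots 0}=\operatorname{tr}(\rho)=1$, which guarantees positive leading components and legitimizes the rescaling into the weighted coherent-state form of \eqref{eqw}.
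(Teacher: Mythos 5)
Your route coincides with the paper's own (largely implicit) justification of this corollary: the paper likewise gets the symmetry of $A$ from Theorem~\ref{doublesymmetric} and the classicality from the chain ``strongly symmetric hierarchically dominated nonnegative $\Rightarrow$ completely positive $\Rightarrow$ (even order) positive semidefinite,'' quoting the decomposition $\mathcal{A}=\sum_{k}(u^{(k)})^{\otimes m}$ from \cite{stronglysymmetric} exactly as you do. Where you go beyond the paper is in actually arguing the two matrix-level claims, which the corollary asserts without proof: your observation that $A$ is entrywise nonnegative (hence copositive) once $|a_{ii\cdots i}|\geq s_{ii}(\mathcal{A})$ holds, and your reduction of the $H$-matrix property to row diagonal dominance of $A$ via \eqref{beq2}, are both sound as far as they go. The one genuine gap --- which you correctly flag yourself --- is that neither you nor the paper verifies that hierarchical domination of the tensor entails the row-sum dominance $|a_{ii\cdots i}|\geq r_{i}(\mathcal{A})$ (equivalently $|a_{ii}|\geq P_{i}(A)$); this requires unpacking the combinatorial definition from \cite{stronglysymmetric}, since the row sum $r_i(\mathcal{A})$ counts each distinct-index class with a multiplicity that the hierarchical inequalities do not obviously control, and it is the only place where the $H$-matrix conclusion could fail. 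A secondary issue, again shared with the paper, is that a completely positive decomposition yields arbitrary nonnegative vectors $u^{(k)}$, whereas the classicality form \eqref{eqw} requires terms $\bigl(1,\mathbf{n}^{(i)}\bigr)^{\otimes m}$ with $\mathbf{n}^{(i)}$ a unit vector; your rescaling normalizes the leading component but not $\|\mathbf{n}^{(i)}\|=1$, so strictly speaking what is established is positive semidefiniteness of $\mathcal{A}$, and the identification with \eqref{eqw} still needs a word of justification.
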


In this section, we employed the matrix generated by tensors to analyze the classical characteristics of spin-$j$ states. We investigated the characteristics of the coefficient tensor under both symmetric and strongly symmetric conditions. Furthermore, we elucidated a direct correlation between the strong symmetry of the tensor and the symmetry of the tensor-generating matrix.
\section{Application in $H$-Eigenvalue inclusion sets for high order tensors}
\label{sec:inclusion}

%

In this section, the focus is on the inclusion sets of $H$-Eigenvalues for higher-order tensors. Traditional eigenvalue localization results for matrices, including Brauer's Ovals of Cassini sets, Ostrowski sets, and $S$-type sets, do not directly apply to tensor eigenvalues. To address this challenge, an effort is made to establish a connection between eigenvalue localization for higher-order tensors and tensor-generated matrices, expanding on the connection established in Section 3.

By leveraging this connection, the distribution of $H$-eigenvalues for higher-order tensors is determined by linking it to the distribution of eigenvalues for the subset of $H$-matrices. This approach allows for the extension of most matrix eigenvalue localization sets to higher-order tensor eigenvalues. Additionally, significant inclusion sets for tensor eigenvalues are introduced, including modified versions of Brauer's Ovals of Cassini sets, Ostrowski sets, and $S$-type inclusion sets.

\subsection{The applications of tensor-generated in $H$-Eigenvalue inclusion sets for high order tensors}

 Ger$\breve{s}$gorin disc theorem is fundamental and famous in matrix eigenvalue localization. Qi \cite{Qi2005} introduces the $H$-eigenvalue localization set for real symmetric tensors, which extends the Ger$\check{s}$gorin matrix eigenvalue inclusion theorem for matrices.
\begin{theorem}{\rm\cite[Theorem 6]{Qi2005}}\label{thm1.1}
Let $\mathcal{A}=(a_{i_1i_2\cdots i_m}) \in \mathbb{C}^{[m\times n]}$, $n\geq 2$. Then
\begin{equation*}
  \sigma(\mathcal{A})\subseteq \Gamma(\mathcal{A})=\bigcup_{i\in N} \Gamma_{i}(\mathcal{A}),
\end{equation*}
where
$\Gamma_{i}(\mathcal{A})=\{z\in \mathbb{C}:|z-a_{i\cdots i}|\leq r_{i}(\mathcal{A})\},\quad r_{i}(\mathcal{A})=\sum\limits_{\substack{i_2,\cdots, i_m\in N,\\\delta_{ii_2\cdots i_m} =0}}|a_{i i_2\cdots i_m}|.$
\end{theorem}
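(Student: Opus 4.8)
The plan is to mimic the classical Ger\v{s}gorin disc argument, transplanted to the tensor eigenvalue equation \eqref{eig}. First I would fix an arbitrary $\lambda \in \sigma(\mathcal{A})$ together with an associated nonzero eigenvector $x = (x_1, x_2, \ldots, x_n)^{\top}$ satisfying $\mathcal{A} x^{m-1} = \lambda x^{[m-1]}$. The decisive preparatory step is to select an index $i \in N$ at which the eigenvector attains its largest modulus, i.e. $|x_i| = \max_{k \in N} |x_k|$; because $x \neq 0$, this maximum is strictly positive, so $|x_i| > 0$. This choice of a dominating coordinate is the one genuine idea in the proof, and everything else is bookkeeping.

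Next I would write out the $i$-th component of the eigenvalue equation explicitly, using the definition of $(\mathcal{A}x^{m-1})_i$, and isolate the diagonal contribution $a_{i\cdots i}\, x_i^{m-1}$ from the off-diagonal sum:
\begin{equation*}
  (\lambda - a_{i\cdots i})\, x_i^{m-1} = \sum\limits_{\substack{i_2,\cdots,i_m\in N,\\ \delta_{i i_2 \cdots i_m} = 0}} a_{i i_2\cdots i_m}\, x_{i_2}\cdots x_{i_m}.
\end{equation*}
Taking absolute values and applying the triangle inequality to the right-hand side yields
\begin{equation*}
  |\lambda - a_{i\cdots i}|\, |x_i|^{m-1} \leq \sum\limits_{\substack{i_2,\cdots,i_m\in N,\\ \delta_{i i_2 \cdots i_m} = 0}} |a_{i i_2\cdots i_m}|\, |x_{i_2}|\cdots |x_{i_m}|.
\end{equation*}
The maximality of $|x_i|$ now gives the key bound $|x_{i_2}|\cdots|x_{i_m}| \leq |x_i|^{m-1}$ for every off-diagonal term, so the right-hand side is dominated by $|x_i|^{m-1} \sum |a_{i i_2\cdots i_m}| = |x_i|^{m-1}\, r_i(\mathcal{A})$.

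Finally I would divide both sides by $|x_i|^{m-1} > 0$, which is legitimate precisely because of the strict positivity secured in the first step, to obtain $|\lambda - a_{i\cdots i}| \leq r_i(\mathcal{A})$; this is exactly the statement $\lambda \in \Gamma_i(\mathcal{A}) \subseteq \Gamma(\mathcal{A})$. Since $\lambda$ was an arbitrary element of $\sigma(\mathcal{A})$, the inclusion $\sigma(\mathcal{A}) \subseteq \Gamma(\mathcal{A})$ follows. There is no serious obstacle here: the only subtlety worth flagging is ensuring $|x_i| \neq 0$ before dividing, which the maximal-coordinate selection guarantees, and verifying that the product estimate is uniform across all multi-indices appearing in the off-diagonal sum.
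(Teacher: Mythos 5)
Your proof is correct and is essentially the standard argument from Qi's original paper (the paper here states this theorem as a citation to \cite{Qi2005} without reproducing a proof): select the maximal-modulus component of the eigenvector, isolate the diagonal term in the $i$-th component of $\mathcal{A}x^{m-1}=\lambda x^{[m-1]}$, bound the off-diagonal products by $|x_i|^{m-1}$, and divide. No gaps; the only point needing care, the strict positivity of $|x_i|$ before dividing, is handled correctly.
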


Building upon the insights from Section 3, the Ger$\breve{s}$gorin disc theorem is extended to the $H$-Eigenvalue of high-order tensors as follows.

\begin{theorem}\label{MG}
Let $\mathcal{A}=(a_{i_1i_2\cdots i_m}) \in \mathbb{C}^{[m\times n]}$, $n\geq 2$. Then
\begin{equation*}
  {\sigma(\mathcal{A})} \subseteq {\mathcal{F}(\mathcal{A})}=\bigcup_{i\in N} {\mathcal{F}_{i}}(\mathcal{A}),
\end{equation*}
where %
 $ {\mathcal{F}_{i}}(\mathcal{A})=\{z\in \mathbb{C}:[|z-a_{i\cdots i}|-s_{ii}(\mathcal{A})]\leq \sum\limits_{j=1,j\neq i}^{n}s_{ij}(\mathcal{A})\}$.
\end{theorem}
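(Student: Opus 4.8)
The plan is to argue by contradiction, reducing the tensor inclusion to strict diagonal dominance of the tensor-generated matrix of a shifted tensor and then invoking Theorem \ref{HH}. First I would fix an arbitrary $\lambda \in \sigma(\mathcal{A})$; by definition this is a real $H$-eigenvalue, so there is a nonzero real vector $x$ with $\mathcal{A}x^{m-1} = \lambda x^{[m-1]}$. Suppose, toward a contradiction, that $\lambda \notin \mathcal{F}_i(\mathcal{A})$ for every $i \in N$. Negating the defining inequality of $\mathcal{F}_i(\mathcal{A})$ yields, for all $i$, the strict inequality $|\lambda - a_{i\cdots i}| - s_{ii}(\mathcal{A}) > \sum_{j=1,j\neq i}^{n} s_{ij}(\mathcal{A})$.

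Next I would examine the tensor-generated matrix $B$ of the shifted tensor $\mathcal{B} = \lambda\mathcal{I} - \mathcal{A}$. Since the unit tensor $\mathcal{I}$ affects only the diagonal, the diagonal entries of $\mathcal{B}$ are $\lambda - a_{i\cdots i}$, while every off-diagonal entry of $\mathcal{B}$ equals $-a_{i i_2\cdots i_m}$ and hence has the same modulus as the corresponding entry of $\mathcal{A}$. Consequently $s_{ij}(\mathcal{B}) = s_{ij}(\mathcal{A})$ for all $i,j$, and by Definition \ref{TGM} the matrix $B$ has diagonal entries $|\lambda - a_{i\cdots i}| - s_{ii}(\mathcal{A})$ and off-diagonal entries $s_{ij}(\mathcal{A})$. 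The strict inequalities above then say precisely that $B$ is strictly diagonally dominant; note that since $\sum_{j\neq i}s_{ij}(\mathcal{A}) \geq 0$, each diagonal entry is automatically positive, so the $SD_n$ check is literal.

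Finally, since $B$ is strictly diagonally dominant it is an $H$-matrix, so Theorem \ref{HH} forces $\mathcal{B} = \lambda\mathcal{I} - \mathcal{A}$ to be an $H$-tensor; therefore $0 \notin \sigma(\mathcal{B})$. On the other hand, the eigenpair relation gives $\mathcal{B}x^{m-1} = \lambda x^{[m-1]} - \mathcal{A}x^{m-1} = 0$ with $x \neq 0$ real, so $0$ is an $H$-eigenvalue of $\mathcal{B}$, i.e. $0 \in \sigma(\mathcal{B})$. This contradiction shows that $\lambda$ must lie in some $\mathcal{F}_i(\mathcal{A})$, establishing $\sigma(\mathcal{A}) \subseteq \mathcal{F}(\mathcal{A})$.

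The routine but essential bookkeeping is the identity $s_{ij}(\mathcal{B}) = s_{ij}(\mathcal{A})$ and the observation that shifting by $\lambda\mathcal{I}$ alters only the diagonal. The genuine crux is recognizing that the complement condition $\lambda \notin \mathcal{F}_i(\mathcal{A})$ has been engineered to coincide exactly with strict diagonal dominance of the tensor-generated matrix of $\lambda\mathcal{I} - \mathcal{A}$; this is what allows the matrix-level machinery of Theorem \ref{HH}, together with the property that an $H$-tensor excludes $0$ from its $H$-spectrum, to deliver the inclusion without any direct manipulation of the multilinear eigenvalue equation.
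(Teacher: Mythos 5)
Your proposal is correct and follows essentially the same route as the paper: shift to $\mathcal{B}=\lambda\mathcal{I}-\mathcal{A}$, observe that $s_{ij}(\mathcal{B})=s_{ij}(\mathcal{A})$, read the negated inclusion conditions as strict diagonal dominance of the tensor-generated matrix of $\mathcal{B}$, conclude via Theorem \ref{HH} that $\mathcal{B}$ is an $H$-tensor, and contradict $0\in\sigma(\mathcal{B})$. The only cosmetic difference is that the paper phrases the dominance step at the tensor level via \eqref{beq2} and \eqref{defd}, whereas you pass explicitly through the $H$-matrix property of $B$; these are interchangeable given Theorems \ref{HH} and \ref{thedd}.
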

\begin{proof}
For any $\lambda \in \sigma(\mathcal{A})$, let $\mathcal{B}=(b_{i_1 i_2\cdots i_m})=\lambda\mathcal{I}-\mathcal{A}$. Then
\begin{equation*}
  0\in \sigma(\mathcal{B}),\quad  r_{i}(\mathcal{A})=r_{i}(\mathcal{B}),\quad  s_{ij}(\mathcal{A})=s_{ij}(\mathcal{B}), \quad \forall i,j\in N.
\end{equation*}
We assume $\lambda\notin \mathcal{F}(\mathcal{A})$, which indicate that there is $\lambda\notin \mathcal{F}_{i}(\mathcal{A}), \forall i\in N $. In other words, we obtain
\begin{equation*}
  |\lambda-a_{i\cdots i}|-s_{ii}(\mathcal{A})> \sum\limits_{j=1,j\neq i}^{n}|s_{ij}(\mathcal{A})|, \quad \forall i\in N.
\end{equation*}
Equivalently, we have
\begin{equation*}
  |b_{i\cdots i}|- s_{ii}(\mathcal{B})> \sum\limits_{j=1,j\neq i}^{n}|s_{ij}(\mathcal{B})|, \quad \forall i\in N.
\end{equation*}
According to equalities \eqref{beq2} and \eqref{defd},
$\mathcal{B}$ is a strictly diagonally dominant tensor. Moreover, there is $0\notin \sigma(\mathcal{B})$,  which is in contradiction with the $0\in \sigma(\mathcal{B})$. The proof is completed.
\end{proof}

\begin{remark}
It's worth noting that the set ${\mathcal{F}(\mathcal{A})}$ is consistent with classical Ger$\breve{s}$gorin set $\Gamma(\mathcal{A})$ in \cite{Qi2005}.
\end{remark}

Furthermore, Li et al. \cite{LI2014N} demonstrated that the well-established matrix Brauer set, often contained within the Ger$\breve{s}$gorin sets, lacks generalizability to higher-order tensors.

\begin{theorem}[Brauer's Ovals of Cassini \cite{Varga}]
Let $A=(a_{ij})\in \mathbb{C}^{n,n}$ be a $n\times n$ complex matrix, $n\geq 2$, and $\sigma(A)$ be the spectrum of $A$. Then
\begin{equation*}
  \sigma(A)\subseteq \Theta(A)=\bigcup\limits_{\substack{i,j\in N \\ j\neq i}}\Theta_{ij}(A),
\end{equation*}
where $\Theta_{ij}(A)=\{z\in \mathbb{C}:|z-a_{ii}||z-a_{jj}|\leq r_{i}(A)r_{j}(A)\}$, $r_{i}(A)=\sum\limits_{k\neq i}|a_{ik}|$.
\end{theorem}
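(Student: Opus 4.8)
The plan is to prove this directly from the eigenvalue equation $Ax=\lambda x$ using the two components of the eigenvector of largest modulus, rather than the single largest component that suffices for the Ger$\check{s}$gorin disc theorem. First I would fix an eigenvalue $\lambda\in\sigma(A)$ with eigenvector $x=(x_{1},\ldots,x_{n})^{\top}\neq 0$, and select indices $p,q\in N$ with $p\neq q$ such that $|x_{p}|\geq|x_{q}|\geq|x_{k}|$ for all $k\in N\setminus\{p\}$; that is, $|x_{p}|$ is the largest modulus among the components and $|x_{q}|$ the second largest. Since $n\geq 2$ and $x\neq 0$, such a pair exists and $|x_{p}|>0$.

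Next I would extract two scalar inequalities from the coordinate form of $Ax=\lambda x$. Rewriting the $p$-th coordinate as $(\lambda-a_{pp})x_{p}=\sum_{j\neq p}a_{pj}x_{j}$ and bounding $|x_{j}|\leq|x_{q}|$ for every $j\neq p$ yields $|\lambda-a_{pp}|\,|x_{p}|\leq|x_{q}|\,r_{p}(A)$. Symmetrically, the $q$-th coordinate gives $(\lambda-a_{qq})x_{q}=\sum_{j\neq q}a_{qj}x_{j}$, and bounding $|x_{j}|\leq|x_{p}|$ for all $j$ produces $|\lambda-a_{qq}|\,|x_{q}|\leq|x_{p}|\,r_{q}(A)$. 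Multiplying these two inequalities gives $|\lambda-a_{pp}|\,|\lambda-a_{qq}|\,|x_{p}|\,|x_{q}|\leq|x_{p}|\,|x_{q}|\,r_{p}(A)r_{q}(A)$.

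The concluding step depends on whether $|x_{q}|$ vanishes. If $|x_{q}|>0$, then $|x_{p}|\,|x_{q}|>0$ and dividing through yields $|\lambda-a_{pp}|\,|\lambda-a_{qq}|\leq r_{p}(A)r_{q}(A)$, i.e. $\lambda\in\Theta_{pq}(A)\subseteq\Theta(A)$. The main obstacle, and the only place requiring genuine care, is the degenerate case $|x_{q}|=0$: here every component other than $x_{p}$ vanishes, so the first inequality forces $|\lambda-a_{pp}|\,|x_{p}|\leq 0$, whence $\lambda=a_{pp}$ and the product $|\lambda-a_{pp}|\,|\lambda-a_{qq}|=0\leq r_{p}(A)r_{q}(A)$ holds trivially, again placing $\lambda\in\Theta_{pq}(A)$. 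Since in either case an arbitrary eigenvalue lies in some $\Theta_{pq}(A)$, the union $\Theta(A)$ contains $\sigma(A)$, which completes the argument.
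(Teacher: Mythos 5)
Your proof is correct and complete; in particular you handle the degenerate case $|x_{q}|=0$ properly, which is the one point where careless versions of this argument fail. The paper itself offers no proof of this statement --- it is quoted as a classical result with a citation to Varga --- and your argument is precisely the standard one found there: extract the $p$-th and $q$-th coordinate equations for the two largest components of the eigenvector, bound each by the second-largest and largest moduli respectively, and multiply. Nothing further is needed.
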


However, with the aid of the connections between tensors and tensor-generated matrices, along with the principles of double diagonal dominance and matrix eigenvalue distribution, we are able to derive the following modified Brauer set for higher-order tensor $\mathcal{A}$.

\begin{theorem}\label{DubleO}
Let $\mathcal{A}=(a_{i_1i_2\cdots i_m}) \in \mathbb{C}^{[m\times n]}$. Then
\begin{equation*}
  \sigma(\mathcal{A})\subseteq \mathcal{D}(\mathcal{A})=\bigcup_{\substack{i,j\in N \\ j\neq i}} \mathcal{D}_{i,j}(\mathcal{A}),
\end{equation*}
where
\begin{equation*}
  \mathcal{D}_{i,j}(\mathcal{A})=\{z\in \mathbb{C}:[|z-a_{ii\cdots i}|-s_{ii}(\mathcal{A})][|z-a_{jj\cdots j}|-s_{jj}(\mathcal{A})]\leq P_{i}(A)P_{j}(A)\}.
\end{equation*}
\end{theorem}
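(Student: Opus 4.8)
The plan is to mirror the contradiction argument of Theorem \ref{MG}, but to replace the role of strict (single-index) diagonal dominance by \emph{strict double diagonal dominance} of the tensor-generated matrix. First I would fix an arbitrary $\lambda \in \sigma(\mathcal{A})$ and form the shifted tensor $\mathcal{B}=(b_{i_1i_2\cdots i_m})=\lambda\mathcal{I}-\mathcal{A}$, so that $0\in\sigma(\mathcal{B})$. Since $\mathcal{I}$ only affects diagonal positions, the off-diagonal moduli are unchanged, and therefore $s_{ij}(\mathcal{A})=s_{ij}(\mathcal{B})$ for all $i,j\in N$, whence $P_i(A)=P_i(B)$, where $B$ denotes the tensor-generated matrix of $\mathcal{B}$. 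Moreover the diagonal of $\mathcal{B}$ is $b_{ii\cdots i}=\lambda-a_{ii\cdots i}$, so $|b_{ii\cdots i}|=|\lambda-a_{ii\cdots i}|$. These identities are the exact analogues of the ones established at the start of the proof of Theorem \ref{MG}.

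Next I would argue by contradiction: suppose $\lambda\notin\mathcal{D}(\mathcal{A})$, i.e. $\lambda\notin\mathcal{D}_{i,j}(\mathcal{A})$ for every pair $i\neq j$, which yields
\[
  \bigl[|\lambda-a_{ii\cdots i}|-s_{ii}(\mathcal{A})\bigr]\bigl[|\lambda-a_{jj\cdots j}|-s_{jj}(\mathcal{A})\bigr] > P_{i}(A)P_{j}(A),\qquad \forall\, i\neq j.
\]
Rewriting each bracket through $\mathcal{B}$, the diagonal entry of the tensor-generated matrix $B$ is $B_{ii}=|b_{ii\cdots i}|-s_{ii}(\mathcal{B})=|\lambda-a_{ii\cdots i}|-s_{ii}(\mathcal{A})$, so the left-hand side is $B_{ii}B_{jj}$ while the right-hand side equals $P_i(B)P_j(B)$. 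This is precisely the strict form of the double-diagonal-dominance condition \eqref{defdd}, so $B$ is an $SDD_n$ matrix.

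Finally I would invoke Lemma \ref{Hmat}(2) to conclude that $B$ is an $H$-matrix, and then Theorem \ref{HH} to conclude that $\mathcal{B}$ is an $H$-tensor. By the $H$-tensor property recorded after \eqref{def3.1}, this forces $0\notin\sigma(\mathcal{B})$, contradicting $0\in\sigma(\mathcal{B})$. Hence $\lambda\in\mathcal{D}(\mathcal{A})$, and since $\lambda$ was arbitrary, $\sigma(\mathcal{A})\subseteq\mathcal{D}(\mathcal{A})$.

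The main obstacle I anticipate is the sign and absolute-value bookkeeping in the translation to $SDD_n$: the test \eqref{defdd} is stated with $|a_{ii}||a_{jj}|$, whereas the Brauer oval uses the bare differences $|z-a_{ii\cdots i}|-s_{ii}(\mathcal{A})$ without an outer modulus. I would resolve this by noting that $P_i(A)P_j(A)\geq 0$, so the strict inequality forces the product of the two brackets to be positive; the two brackets thus share a common sign and their product equals $|B_{ii}||B_{jj}|$ in either case. Under the paper's standing convention $|a_{ii\cdots i}|>s_{ii}(\mathcal{A})$ (applied here to $\mathcal{B}$) both brackets are in fact positive, making the identification with $SDD_n$ immediate. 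The supporting equalities $s_{ij}(\mathcal{A})=s_{ij}(\mathcal{B})$ and $P_i(A)=P_i(B)$ are routine and exactly parallel the verification in the proof of Theorem \ref{MG}, so I would state them without elaborate computation.
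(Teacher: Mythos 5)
Your proposal is correct and follows essentially the same route as the paper's own proof: shift by $\lambda\mathcal{I}$, assume $\lambda\notin\mathcal{D}(\mathcal{A})$, identify the resulting strict inequalities as strict double diagonal dominance of the tensor-generated matrix $B$, and contradict $0\in\sigma(\mathcal{B})$ via Theorem \ref{HH}. Your extra remark on the sign/absolute-value bookkeeping in matching the brackets to the $SDD_n$ condition \eqref{defdd} is a point the paper passes over silently, but it does not change the argument.
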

\begin{proof}
For any $\lambda \in \sigma(\mathcal{A})$, let $\mathcal{B}=(b_{i_1 i_2\cdots i_m})=\lambda\mathcal{I}-\mathcal{A}$. Then
\begin{equation*}
  0\in \sigma(\mathcal{B}),\quad  r_{i}(\mathcal{A})=r_{i}(\mathcal{B}),\quad  s_{ij}(\mathcal{A})=s_{ij}(\mathcal{B}), \quad \forall i,j\in N.
\end{equation*}
We assume $\lambda\notin \mathcal{D}(\mathcal{A})$, which indicate that there is $\lambda\notin \mathcal{D}_{i,j}(\mathcal{A}), \forall i,j\in N $. In other words, we get
\begin{equation*}
  [|\lambda-a_{ii\cdots i}|-s_{ii}(\mathcal{A})][|\lambda-a_{jj\cdots j}|-s_{jj}(\mathcal{A})]> P_{i}(A)P_{j}(A), \quad \forall i,j\in N,
\end{equation*}
that is,
\begin{equation*}
  [|b_{ii\cdots i}|-s_{ii}(\mathcal{B})][|b_{jj\cdots j}|-s_{jj}(\mathcal{B})]> P_{i}(B)P_{j}(B), \quad \forall i,j\in N.
\end{equation*}
According to equalities \eqref{beq2} and \eqref{defdd},
the tensor-generated matrix $B$ of tensor $\mathcal{B}$ is a strictly doubly diagonally dominant matrix, that is, $\mathcal{B}$ is a $H$-tensor. Moreover, there is $0\notin \sigma(\mathcal{B})$,  which is in contradiction with the $0\in \sigma(\mathcal{B})$. The proof is completed.
\end{proof}

As the radii of the Ger$\breve{s}$gorin discs rely on both the deleted row sums $r_{i}(A)$ and the deleted column sums $c_{i}(A)$, Ostrowski \cite{Varga} introduces the well-known matrix eigenvalue inclusion set, known as the Ostrowski set.

\begin{theorem}[Ostrowski \cite{Varga}]
Let $A=(a_{ij})\in \mathbb{C}^{n,n}$ be a $n\times n$ complex matrix, $n\geq 2$, $0\leq \gamma \leq 1$, and $\sigma(A)$ be the spectrum of $A$. Then
\begin{equation*}
  \sigma(A)\subseteq \Omega(A)=\bigcup\limits_{i=1}^{n}\Omega_{ij}(A),
\end{equation*}
where $\Theta_{ij}(A)=\{z\in \mathbb{C}:|z-a_{ii}|\leq (r_{i}(A))^{\gamma}(c_{i}(A))^{1-\gamma}\}$, $r_{i}(A)=\sum\limits_{j\neq i}|a_{ij}|$, $c_{i}(A)=\sum\limits_{j\neq i}|a_{ji}|$.
\end{theorem}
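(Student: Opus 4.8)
The plan is to reproduce, at the matrix level $m=2$, the very same $\lambda\mathcal{I}-\mathcal{A}$ device already used in the proofs of Theorem~\ref{MG} and Theorem~\ref{DubleO}, the only new ingredient being that the relevant nonsingularity criterion is now Ostrowski's product $\gamma$-dominance instead of ordinary (or doubly) diagonal dominance. First I would fix $\lambda\in\sigma(A)$ and argue by contradiction, assuming $\lambda\notin\Omega(A)$, i.e. $|\lambda-a_{ii}|>\big(r_i(A)\big)^{\gamma}\big(c_i(A)\big)^{1-\gamma}$ for every $i\in N$. Setting $B=\lambda I-A$, the off-diagonal entries of $B$ agree (up to sign) with those of $A$, so $r_i(B)=r_i(A)$ and $c_i(B)=c_i(A)$, while $b_{ii}=\lambda-a_{ii}$; the hypothesis then reads $|b_{ii}|>\big(r_i(B)\big)^{\gamma}\big(c_i(B)\big)^{1-\gamma}$, i.e. $B$ is a strictly product $\gamma$-diagonally dominant matrix ($SPD^{\gamma}_n$) in the sense of \eqref{defo2} (recall that for $m=2$ one has $P_i(A)=r_i(A)$ and $Q_i(A)=c_i(A)$). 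By Lemma~\ref{Hmat}(4) such a $B$ is a nonsingular $H$-matrix, so $0\notin\sigma(B)$; but $\lambda\in\sigma(A)$ forces $0\in\sigma(B)$, a contradiction. Hence $\sigma(A)\subseteq\Omega(A)$.

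The whole argument thus rests on the single fact that $SPD^{\gamma}_n$ matrices are nonsingular, and this is the step I expect to be the genuine obstacle, since it is exactly where row sums and column sums must be reconciled. If one wishes to prove it from scratch rather than cite Lemma~\ref{Hmat}(4)/\cite{Varga}, I would dispose of the endpoints $\gamma=1$ and $\gamma=0$ immediately (they are just strict row, respectively column, diagonal dominance, handled by the Lévy--Desplanques criterion applied to $B$ and to $B^{\top}$), and treat $0<\gamma<1$ by a Hölder argument. Assuming $Bx=0$ with $x\neq0$, the $i$-th equation gives $|b_{ii}|\,|x_i|\le\sum_{j\neq i}|b_{ij}|\,|x_j|$; splitting $|b_{ij}|\,|x_j|=|b_{ij}|^{\gamma}\big(|b_{ij}|^{1-\gamma}|x_j|\big)$ and applying Hölder with exponents $1/\gamma$ and $1/(1-\gamma)$ yields
\begin{equation*}
|b_{ii}|\,|x_i|\le \big(r_i(B)\big)^{\gamma}\Big(\sum_{j\neq i}|b_{ij}|\,|x_j|^{1/(1-\gamma)}\Big)^{1-\gamma}.
\end{equation*}
Using the strict dominance $|b_{ii}|>\big(r_i(B)\big)^{\gamma}\big(c_i(B)\big)^{1-\gamma}$ to cancel the factor $\big(r_i(B)\big)^{\gamma}$, and then raising to the power $1/(1-\gamma)$, I would obtain, writing $y_j:=|x_j|^{1/(1-\gamma)}\ge 0$,
\begin{equation*}
c_i(B)\,y_i<\sum_{j\neq i}|b_{ij}|\,y_j\qquad\text{for every }i\text{ with }x_i\neq0.
\end{equation*}

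To finish I would sum this relation over all $i\in N$: indices with $x_i=0$ contribute $0\le\text{(nonnegative)}$, and since $x\neq 0$ at least one index is strict, so the summed inequality is strict. Recalling $c_i(B)=\sum_{k\neq i}|b_{ki}|$, the left side becomes $\sum_{i}\sum_{k\neq i}|b_{ki}|\,y_i$ and the right side becomes $\sum_{i}\sum_{j\neq i}|b_{ij}|\,y_j$; after relabelling the summation indices both collapse to the \emph{same} double sum $\sum_{i\neq j}|b_{ij}|\,y_j$, so the strict inequality degenerates to $S<S$, which is absurd. This contradiction shows $B$ is nonsingular and completes the proof. The delicate point, and the reason the estimate genuinely refines Ger\v{s}gorin, is precisely this symmetric reindexing that makes the row-weighted total coincide with the column-weighted total; all the analytic content is concentrated in choosing the Hölder exponents $1/\gamma$ and $1/(1-\gamma)$ so that a mixed row/column bound is converted into this self-cancelling sum.
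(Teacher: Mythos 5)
The paper does not actually prove this statement: it is quoted as a classical result and attributed to Varga, so there is no in-paper argument to compare against. Your proposal supplies a genuine proof, and it is correct. The first paragraph (reduce to nonsingularity of $B=\lambda I-A$ and invoke Lemma~\ref{Hmat}(4)) is the same contradiction template the paper uses for its tensor analogues (Theorems~\ref{MG}, \ref{DubleO}, \ref{POst}), but on its own it would be circular here, since Lemma~\ref{Hmat}(4) is itself cited from the same sources; you recognize this, and your second paragraph closes the loop. That Hölder argument is the standard Ostrowski proof and checks out: with exponents $1/\gamma$, $1/(1-\gamma)$ one gets $|b_{ii}|\,|x_i|\le (r_i(B))^{\gamma}\bigl(\sum_{j\neq i}|b_{ij}|\,|x_j|^{1/(1-\gamma)}\bigr)^{1-\gamma}$, the strict product dominance converts this to $c_i(B)y_i<\sum_{j\neq i}|b_{ij}|y_j$ at every index with $x_i\neq 0$, and summing makes both sides collapse to $\sum_{i\neq j}|b_{ij}|y_j$, giving $S<S$. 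The only step you leave implicit is the cancellation of $(r_i(B))^{\gamma}$, which requires $r_i(B)>0$; but if $r_i(B)=0$ the $i$-th equation of $Bx=0$ forces $x_i=0$ (since the dominance hypothesis still gives $|b_{ii}|>0$), so such indices never enter the strict inequality and the argument survives. This is a trivial edge case, not a gap. In short: correct, self-contained where the paper merely cites, and consistent in spirit with the paper's own $\lambda\mathcal{I}-\mathcal{A}$ machinery.
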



While direct generalization of Ostrowski sets to higher-order tensors, akin to matrix eigenvalue Brauer sets, is unfeasible, the modified Ostrowski sets tailored for higher-order tensors $\mathcal{A} \in \mathbb{C}^{[m\times n]}$ can be obtained by leveraging the correlation between the tensor $\mathcal{A} \in \mathbb{C}^{[m\times n]}$ and its corresponding tensor-generated matrix $A\in \mathbb{C}^{n,n}$. This process involves establishing a connection between $\gamma$-diagonally dominant matrices and the distribution of matrix eigenvalues.

\begin{theorem}\label{POst}
Let $\mathcal{A}=(a_{i_1i_2\cdots i_m}) \in \mathbb{C}^{[m\times n]}$, $n\geq 2$, $0\leq \gamma \leq 1$. Then
\begin{equation*}
  \sigma(\mathcal{A})\subseteq \mathcal{O}(\mathcal{A})=\bigcup_{i\in N} \mathcal{O}_{i}(\mathcal{A}),
\end{equation*}
where
\begin{equation*}
  \mathcal{O}_{i}(\mathcal{A})=\{z\in \mathbb{C}:|z-|a_{i\cdots i}|+ s_{ii}(\mathcal{A})|\leq [P_{i}(A)]^{\gamma }[Q_{i}(A)]^{1-\gamma}\}.
\end{equation*}
\end{theorem}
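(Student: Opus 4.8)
The plan is to mirror the proofs of Theorem \ref{MG} and Theorem \ref{DubleO}, translating the eigenvalue localization problem into a strict product $\gamma$-diagonal dominance statement for a shifted tensor-generated matrix and then invoking the $H$-matrix/$H$-tensor bridge. First I would fix an arbitrary $\lambda \in \sigma(\mathcal{A})$ and set $\mathcal{B}=(b_{i_1 i_2 \cdots i_m}) = \lambda \mathcal{I} - \mathcal{A}$. Since the shift only affects the diagonal entries $b_{i\cdots i} = \lambda - a_{i\cdots i}$ while leaving every off-diagonal entry unchanged, we obtain $0 \in \sigma(\mathcal{B})$ together with the invariances $r_{i}(\mathcal{A}) = r_{i}(\mathcal{B})$ and, crucially, $s_{ij}(\mathcal{A}) = s_{ij}(\mathcal{B})$ for all $i,j \in N$. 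In particular the off-diagonal quantities $P_{i}$ and $Q_{i}$, being built solely from the $s_{ij}$ with $j \neq i$, coincide for the tensor-generated matrices $A$ and $B$ of $\mathcal{A}$ and $\mathcal{B}$.

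Next I would argue by contradiction. Suppose $\lambda \notin \mathcal{O}(\mathcal{A})$, so that $\lambda \notin \mathcal{O}_{i}(\mathcal{A})$ for every $i \in N$; unwinding the defining inequality this reads
$$|\lambda - a_{i\cdots i}| - s_{ii}(\mathcal{A}) > [P_{i}(A)]^{\gamma}[Q_{i}(A)]^{1-\gamma}, \quad \forall i \in N.$$
Using the invariances established above, together with the identity \eqref{beq2} that identifies $|b_{i\cdots i}| - s_{ii}(\mathcal{B})$ with the diagonal entry of the tensor-generated matrix $B$, this is precisely the assertion that $B$ is a strictly product $\gamma$-diagonally dominant matrix ($SPD_{n}^{\gamma}$) in the sense of \eqref{defo2}.

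Finally, by Lemma \ref{Hmat}(4) the matrix $B$ is then an $H$-matrix, and Theorem \ref{HH} upgrades this to the conclusion that $\mathcal{B}$ is an $H$-tensor; consequently $0 \notin \sigma(\mathcal{B})$, contradicting $0 \in \sigma(\mathcal{B})$. This forces $\lambda \in \mathcal{O}(\mathcal{A})$, and since $\lambda$ was arbitrary the desired inclusion $\sigma(\mathcal{A}) \subseteq \mathcal{O}(\mathcal{A})$ follows.

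The main obstacle I anticipate is bookkeeping rather than conceptual depth: one must verify carefully that the shift $\mathcal{B} = \lambda\mathcal{I} - \mathcal{A}$ preserves every off-diagonal $s_{ij}$ (so that $P_{i}$ and $Q_{i}$ are genuinely unchanged) while the diagonal comparison term transforms correctly into $|b_{i\cdots i}| - s_{ii}(\mathcal{B})$, and that the exponents $\gamma$ and $1-\gamma$ land exactly in the product form of \eqref{defo2}. A minor point worth flagging is the degenerate case where $P_{i}(A)$ or $Q_{i}(A)$ vanishes, which is relevant at the endpoints $\gamma=0$ or $\gamma=1$; there one should confirm that the product $\gamma$-dominance inequality and the ensuing $H$-matrix conclusion remain valid.
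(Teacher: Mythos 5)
Your proposal is correct and follows essentially the same route as the paper's own proof: shift to $\mathcal{B}=\lambda\mathcal{I}-\mathcal{A}$, negate membership in $\mathcal{O}(\mathcal{A})$, recognize the resulting inequalities as strict product $\gamma$-diagonal dominance of the tensor-generated matrix $B$, and conclude via the $H$-matrix/$H$-tensor bridge that $0\notin\sigma(\mathcal{B})$, a contradiction. In fact you identify the dominance class more accurately than the paper does, which cites \eqref{defo1} ($\gamma$-diagonal dominance) where the derived inequality is actually the product form \eqref{defo2} handled by Lemma \ref{Hmat}(4).
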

\begin{proof}
For any $\lambda \in \sigma(\mathcal{A})$, let $\mathcal{B}=(b_{i_1 i_2\cdots i_m})=\lambda\mathcal{I}-\mathcal{A}$. Then
\begin{equation*}
  0\in \sigma(\mathcal{B}),\quad  r_{i}(\mathcal{A})=r_{i}(\mathcal{B}),\quad  s_{ij}(\mathcal{A})=s_{ij}(\mathcal{B}), \quad \forall i,j\in N.
\end{equation*}
We assume $\lambda\notin \mathcal{O}(\mathcal{A})$, which indicate that there is $\lambda\notin \mathcal{O}_{i}(\mathcal{A}), \forall i\in N $. In other words, we obtain
\begin{equation*}
  [|\lambda-a_{i\cdots i}|- s_{ii}(\mathcal{A})]> [P_{i}(A)]^{\gamma }[Q_{i}(A)]^{1-\gamma}, \quad \forall i\in N.
\end{equation*}
That is to say
\begin{equation*}
  [|b_{i\cdots i}|- s_{ii}(\mathcal{B})]> [P_{i}(B)]^{\gamma }[Q_{i}(B)]^{1-\gamma}, \quad \forall i\in N.
\end{equation*}
According to equalities \eqref{beq2} and \eqref{defo1},
the tensor-generated matrix $B$ of tensor $\mathcal{B}$ is a strictly  $\gamma$-diagonally dominant matrix, that is to say, $\mathcal{B}$ is a $H$-tensor. Moreover, there is $0\notin \sigma(\mathcal{B})$,  which is in contradiction with the $0\in \sigma(\mathcal{B})$. The proof is completed.
\end{proof}

\begin{lemma}\label{lem4.8}
If $p,q>0$ and $0\leq \gamma \leq 1$, then
\begin{equation*}
  p^{\gamma}q^{1-\gamma}\leq \gamma p+(1-\gamma)q.
\end{equation*}
The equality holds if and only if $p=q$.
\end{lemma}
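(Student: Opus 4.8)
The statement is the two–term weighted arithmetic--geometric mean inequality for positive reals, so the plan is to reduce it to a single–variable estimate and settle it by elementary calculus. First I would dispose of the boundary cases $\gamma=0$ and $\gamma=1$, where the two sides coincide (equal to $q$ and $p$, respectively) so that the inequality is a trivial equality; in these degenerate cases the equality characterization is vacuous, and I would henceforth assume $0<\gamma<1$.

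For the interior case I would exploit homogeneity: dividing the desired inequality by $q>0$ and setting $t=p/q>0$ reduces the claim to showing
\[
t^{\gamma}\le \gamma t+(1-\gamma),\qquad t>0,
\]
with equality if and only if $t=1$. Accordingly I define $f(t)=\gamma t+(1-\gamma)-t^{\gamma}$ on $(0,\infty)$, and the goal becomes $f(t)\ge 0$ with a unique zero at $t=1$.

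The key step is the derivative analysis. Differentiating gives $f'(t)=\gamma-\gamma t^{\gamma-1}=\gamma\bigl(1-t^{\gamma-1}\bigr)$. Since $0<\gamma<1$ forces $\gamma-1<0$, the map $t\mapsto t^{\gamma-1}$ is strictly decreasing, so $f'(t)<0$ for $t<1$ and $f'(t)>0$ for $t>1$, while $f'(1)=0$. Hence $t=1$ is the unique global minimizer of $f$ on $(0,\infty)$, and $f(1)=\gamma+(1-\gamma)-1=0$. Therefore $f(t)\ge 0$ everywhere with equality precisely at $t=1$; translating back through $t=p/q$ yields $p^{\gamma}q^{1-\gamma}\le \gamma p+(1-\gamma)q$ with equality if and only if $p=q$.

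Since this is the standard weighted AM--GM inequality, there is no genuine obstacle; the only point requiring care is the equality clause, which is why I separate the endpoints $\gamma\in\{0,1\}$ and rely on the strict monotonicity of $f'$ to pin down uniqueness of the minimizer. An alternative one–line route is to invoke the strict concavity of $\ln$ on $(0,\infty)$: Jensen's inequality gives $\ln\bigl(\gamma p+(1-\gamma)q\bigr)\ge \gamma\ln p+(1-\gamma)\ln q=\ln\bigl(p^{\gamma}q^{1-\gamma}\bigr)$, and exponentiating (using that $\ln$ is increasing) delivers the claim, with the equality case again following from strict concavity for $0<\gamma<1$.
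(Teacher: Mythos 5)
Your proof is correct. Note, however, that the paper itself offers no proof of this lemma: it is stated as a bare auxiliary fact (the two-term weighted arithmetic--geometric mean inequality, essentially Young's inequality) and used immediately to pass from Theorem \ref{POst} to Corollary \ref{OP}, so there is no argument in the paper to compare yours against. Both of your routes --- the homogeneity reduction to $f(t)=\gamma t+(1-\gamma)-t^{\gamma}$ followed by the sign analysis of $f'(t)=\gamma\bigl(1-t^{\gamma-1}\bigr)$, and the one-line appeal to strict concavity of $\ln$ --- are standard and complete, and the first correctly pins down the equality case for $0<\gamma<1$ via the strict monotonicity of $t\mapsto t^{\gamma-1}$. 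One small point of precision: at the endpoints $\gamma\in\{0,1\}$ the two sides agree for \emph{all} $p,q>0$, so the ``only if'' half of the equality clause as literally stated in the lemma fails there; your separation of these cases is the right move, though describing the characterization as ``vacuous'' understates the issue --- it would be cleaner to say the biconditional is asserted only for $0<\gamma<1$, which is the reading the paper's application requires.
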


Subsequent to Theorem \ref{POst} and Lemma \ref{lem4.8}, the ensuing Corollary is deduced.

\begin{corollary}\label{OP}
Let $\mathcal{A}=(a_{i_1i_2\cdots i_m}) \in \mathbb{C}^{[m\times n]}$, $n\geq 2$, $0\leq \gamma \leq 1$. Then
\begin{equation*}
  \sigma(\mathcal{A})\subseteq \mathcal{W}(\mathcal{A})=\bigcup_{i\in N} \mathcal{W}_{i}(\mathcal{A}),
\end{equation*}
where
\begin{equation*}
  \mathcal{W}_{i}(\mathcal{A})=\{z\in \mathbb{C}:[|z-a_{i\cdots i}|-s_{ii}(\mathcal{A})]\leq \gamma P_{i}(A)+(1-\gamma)Q_{i}(A)\}.
\end{equation*}
\end{corollary}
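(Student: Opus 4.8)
The plan is to obtain Corollary \ref{OP} as an immediate consequence of Theorem \ref{POst} together with the weighted arithmetic–geometric mean inequality recorded in Lemma \ref{lem4.8}. The key observation is that the two families of inclusion sets share the same centers $a_{i\cdots i}$ and the same left-hand quantity $|z-a_{i\cdots i}|-s_{ii}(\mathcal{A})$; they differ only in the radius, with $\mathcal{O}_i(\mathcal{A})$ using the product bound $[P_i(A)]^{\gamma}[Q_i(A)]^{1-\gamma}$ and $\mathcal{W}_i(\mathcal{A})$ using the weighted sum bound $\gamma P_i(A)+(1-\gamma)Q_i(A)$. Since Lemma \ref{lem4.8} gives $[P_i(A)]^{\gamma}[Q_i(A)]^{1-\gamma}\le \gamma P_i(A)+(1-\gamma)Q_i(A)$, the radius of $\mathcal{W}_i$ is never smaller than that of $\mathcal{O}_i$, so the plan is to conclude $\mathcal{O}_i(\mathcal{A})\subseteq \mathcal{W}_i(\mathcal{A})$ for every $i\in N$.

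Concretely, first I would fix $i\in N$ and take any $z\in\mathcal{O}_i(\mathcal{A})$, so that $|z-a_{i\cdots i}|-s_{ii}(\mathcal{A})\le [P_i(A)]^{\gamma}[Q_i(A)]^{1-\gamma}$. Chaining this with the Lemma \ref{lem4.8} inequality yields $|z-a_{i\cdots i}|-s_{ii}(\mathcal{A})\le \gamma P_i(A)+(1-\gamma)Q_i(A)$, that is, $z\in\mathcal{W}_i(\mathcal{A})$. Taking the union over $i$ gives $\mathcal{O}(\mathcal{A})\subseteq \mathcal{W}(\mathcal{A})$, and combining with the containment $\sigma(\mathcal{A})\subseteq \mathcal{O}(\mathcal{A})$ supplied by Theorem \ref{POst} delivers $\sigma(\mathcal{A})\subseteq \mathcal{W}(\mathcal{A})$, as claimed.

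The only genuine obstacle is the positivity hypothesis $p,q>0$ in Lemma \ref{lem4.8}: the argument above silently assumes $P_i(A)>0$ and $Q_i(A)>0$. I would dispose of the degenerate cases separately. If, for instance, $P_i(A)=0$ with $\gamma>0$, the product bound collapses to $0$ while the sum bound equals $(1-\gamma)Q_i(A)\ge 0$, so the inequality $[P_i(A)]^{\gamma}[Q_i(A)]^{1-\gamma}\le \gamma P_i(A)+(1-\gamma)Q_i(A)$ still holds by inspection; the symmetric case $Q_i(A)=0$ and the boundary values $\gamma\in\{0,1\}$ are handled identically. Hence the set inclusion persists regardless of vanishing row or column sums, and this is really the only point requiring care.

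Alternatively, I could give a fully self-contained proof mirroring that of Theorem \ref{POst}: assuming $\lambda\in\sigma(\mathcal{A})$ lies outside $\mathcal{W}(\mathcal{A})$, set $\mathcal{B}=\lambda\mathcal{I}-\mathcal{A}$, translate the resulting strict inequalities to $\mathcal{B}$ via the invariances $r_i(\mathcal{A})=r_i(\mathcal{B})$ and $s_{ij}(\mathcal{A})=s_{ij}(\mathcal{B})$, invoke \eqref{beq2} and \eqref{defo1} to conclude that the tensor-generated matrix of $\mathcal{B}$ is strictly $\gamma$-diagonally dominant, hence an $H$-matrix by Lemma \ref{Hmat}(3), so that $\mathcal{B}$ is an $H$-tensor and $0\notin\sigma(\mathcal{B})$, contradicting $0\in\sigma(\mathcal{B})$. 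I would favor the containment argument as the shorter and more transparent route, reserving this second proof as a remark.
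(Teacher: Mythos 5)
Your main argument is exactly the paper's intended derivation: the corollary is stated as an immediate consequence of Theorem \ref{POst} combined with the arithmetic--geometric mean inequality of Lemma \ref{lem4.8}, giving $\mathcal{O}_i(\mathcal{A})\subseteq\mathcal{W}_i(\mathcal{A})$ for each $i$. Your explicit treatment of the degenerate cases $P_i(A)=0$ or $Q_i(A)=0$ (where Lemma \ref{lem4.8} does not literally apply) is a point the paper passes over silently, and is worth keeping.
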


Let set $S$ be any nonempty subset of $N=\{1,2,\cdots,n\}$, $n\geq 2$, where $\overline{S}=N\setminus S$ is the complement of the subset $S$ in $N$. For any given matrix $A=(a_{ij})\in \mathbb{C}^{n,n}$, each row sum $r_{i}(A)$ is divided into $S$ and $\overline{S}$ parts:
\begin{equation*}
\begin{cases}
 r_{i}(A)=\sum\limits_{j\in N\setminus \{i\}}|a_{ij}|=r_{i}^{S}(A)+r_{i}^{\overline{S}}(A),\\
 r_{i}^{S}(A)=\sum\limits_{j\in S\setminus \{i\}}|a_{ij}|,\quad r_{i}^{\overline{S}}(A)=r_{i}(A)-r_{i}^{S}(A)=\sum\limits_{j\in \overline{S}\setminus \{i\}}|a_{ij}|, i\in N.
\end{cases}
\end{equation*}

\begin{definition}\label{de4.4.1}
Let set $S$ be any nonempty subset of $N=\{1,2,\cdots,n\}$, $n\geq 2$, for any given matrix $A=(a_{ij})\in \mathbb{C}^{n,n}$, then the matrix $A$ is strictly $S$-diagonally dominant, if satisfied:
\begin{equation*}
\begin{cases}
|a_{ii}|>r_{i}^{S}(A), \forall i\in S,\\
|a_{ii}|-r_{i}^{S}(A)\big)\cdot\big(|a_{jj}|-r_{j}^{\overline{S}}(A)\big)> r_{i}^{\overline{S}}(A)r_{j}^{S}(A), i\in S, j\in \overline{S}.
\end{cases}
\end{equation*}
\end{definition}


The matrix eigenvalue $S$-type inclusion theorem is a prominent classical theory within the field of matrix eigenvalue inclusion theorems. This theorem directly follows from the matrix eigenvalue inclusion theorem based on $S$-diagonally dominant matrices.

\begin{theorem}\rm{\cite{Varga}}
Let $S$ be any nonempty subset of $N=\{1,2,\cdots,n\}$, $n\geq 2$, where $\overline{S}=N\setminus S$ is the complement of the subset $S$ in $N$. Then, for any given matrix $A=(a_{ij})\in \mathbb{C}^{n,n}$, define the Ger$\breve{s}$gorin-type disks
\begin{equation*}
  \Gamma_{i}^{S}(A)=\{z\in \mathbb{C}:|z-a_{ii}|\leq r_{i}^{S}(A)\}, \forall i\in S,
\end{equation*}
and the sets
\begin{equation*}
  V_{i,j}^{S}(A)=\{z\in \mathbb{C}:\big(|z-a_{ii}|-r_{i}^{S}(A)\big)\cdot\big(|z-a_{jj}|-r_{j}^{\overline{S}}(A)\big)\leq r_{i}^{\overline{S}}(A)r_{j}^{S}(A)\},
\end{equation*}
($\forall i\in S,\forall j\in \overline{S}$). Then,
\begin{equation*}
  \sigma(A)\subseteq C^{S}(A)=\bigg(\bigcup_{i\in S}\Gamma_{i}^{S}(A)\bigg)\bigcup
  \bigg(\bigcup_{i\in S,j\in \overline{S}}V_{i,j}^{S}(A)\bigg).
\end{equation*}
\end{theorem}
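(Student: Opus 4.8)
The plan is to deduce the inclusion from a single structural fact: every strictly $S$-diagonally dominant matrix, in the sense of Definition \ref{de4.4.1}, is nonsingular. Granting this, the inclusion follows by a standard shift-and-contradict argument. First I would fix an eigenvalue $\lambda\in\sigma(A)$ and suppose, for contradiction, that $\lambda\notin C^{S}(A)$. Unpacking the complement of $C^{S}(A)$, this means $\lambda\notin\Gamma_{i}^{S}(A)$ for every $i\in S$, i.e. $|\lambda-a_{ii}|>r_{i}^{S}(A)$, and $\lambda\notin V_{i,j}^{S}(A)$ for every $i\in S$ and $j\in\overline{S}$, i.e. $\big(|\lambda-a_{ii}|-r_{i}^{S}(A)\big)\big(|\lambda-a_{jj}|-r_{j}^{\overline{S}}(A)\big)>r_{i}^{\overline{S}}(A)\,r_{j}^{S}(A)$.

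I would then pass to $B=\lambda I-A$. Since $b_{ii}=\lambda-a_{ii}$ and $b_{ik}=-a_{ik}$ for $k\neq i$, one has $|b_{ii}|=|\lambda-a_{ii}|$, $r_{i}^{S}(B)=r_{i}^{S}(A)$, and $r_{i}^{\overline{S}}(B)=r_{i}^{\overline{S}}(A)$. Substituting, the two families of strict inequalities above become exactly the two defining conditions of Definition \ref{de4.4.1} for $B$; that is, $B=\lambda I-A$ is strictly $S$-diagonally dominant. By the structural fact it is nonsingular, so $\det(\lambda I-A)\neq 0$, contradicting $\lambda\in\sigma(A)$. This contradiction forces $\lambda\in C^{S}(A)$, proving the inclusion.

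The work therefore concentrates on the nonsingularity of a strictly $S$-diagonally dominant matrix $B$, which I expect to be the main obstacle and would prove directly. Assuming $Bx=0$ with $x\neq 0$, set $u=\max_{i\in S}|x_{i}|$ and $v=\max_{j\in\overline{S}}|x_{j}|$, attained at $p\in S$ and $q\in\overline{S}$. Reading the $p$-th scalar equation and splitting its off-diagonal sum into the $S$- and $\overline{S}$-blocks gives $\big(|b_{pp}|-r_{p}^{S}(B)\big)u\leq r_{p}^{\overline{S}}(B)\,v$, and the $q$-th equation gives $\big(|b_{qq}|-r_{q}^{\overline{S}}(B)\big)v\leq r_{q}^{S}(B)\,u$. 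In the generic case $u,v>0$ I would multiply these and cancel $uv$, directly contradicting the product condition of Definition \ref{de4.4.1}.

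The genuinely delicate points are the degenerate cases. If $v=0$, then $x$ is supported on $S$ and the $p$-th inequality collapses to $|b_{pp}|\leq r_{p}^{S}(B)$, contradicting the first defining condition. If $u=0$, then $x$ is supported on $\overline{S}$, forcing $|b_{qq}|\leq r_{q}^{\overline{S}}(B)$; pairing this with any fixed $i\in S$ (which exists since $S\neq\emptyset$) makes the left side of the product condition nonpositive while its right side is nonnegative, a contradiction. Finally, when $\overline{S}=\emptyset$ the sets $V_{i,j}^{S}$ disappear and the statement degenerates to classical Ger$\breve{s}$gorin, handled by the plain strictly-diagonally-dominant case. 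Assembling these cases establishes nonsingularity and thereby the theorem.
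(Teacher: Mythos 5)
Your proposal is correct and follows essentially the same route the paper relies on: the paper cites this result from Varga without proof, noting only that it ``directly follows from'' the nonsingularity of strictly $S$-diagonally dominant matrices (Definition~\ref{de4.4.1}), and its own proof of the tensor analogue (Theorem~\ref{thm4.4.7}) uses exactly your shift-and-contradict argument on $B=\lambda I-A$. The only addition on your side is that you actually prove the nonsingularity lemma, including the degenerate cases $u=0$, $v=0$, and $\overline{S}=\emptyset$, which the paper leaves to the citation; that part of your argument is also sound.
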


\begin{equation*}
\begin{cases}
 P_{i}(\mathcal{A})=\sum\limits_{j\in N\setminus \{i\}}|a_{ij}|=r_{i}^{S}(\mathcal{A})+r_{i}^{\overline{S}}(\mathcal{A}),\\
 r_{i}^{S}(\mathcal{A})=\sum\limits_{j\in S\setminus \{i\}}|s_{ij}|,\quad r_{i}^{\overline{S}}(\mathcal{A})=P_{i}(\mathcal{A})-r_{i}^{S}(\mathcal{A}), i\in N.
\end{cases}
\end{equation*}


Exploiting the established relationship between tensors and tensor-generated matrices in the previous section, we expand the classical matrix eigenvalue $S$-type inclusion theorem to encompass higher-order tensor $H$-eigenvalues. Consequently, we present a modified $S$-type inclusion theorem specifically tailored for tensor $H$-eigenvalues. This theorem enables us to effectively characterize the inclusion of $H$-eigenvalues within higher-order tensors by leveraging the connection between tensors and their tensor-generated matrices.
\begin{theorem}\label{thm4.4.7}
Let $\mathcal{A}=(a_{i_1i_2\cdots i_m}) \in \mathbb{C}^{[m\times n]}$ be a $m$ order $n$ dimension tensor and $S$ be any nonempty subset of $N=\{1,2,\cdots,n\}$, $n\geq 2$, where $\overline{S}=N\setminus S$ is the complement of the subset $S$ in $N$. Then
\begin{equation*}
  \sigma(\mathcal{A})\subseteq \mathcal{C}^{S}(\mathcal{A})=\bigg(\bigcup_{i\in S}\Gamma_{i}^{S}(\mathcal{A})\bigg)\bigcup
  \bigg(\bigcup_{i\in S,j\in \overline{S}}\mathcal{C}_{i,j}^{S}(\mathcal{A})\bigg)
\end{equation*}
where
\begin{equation*}
  \Gamma_{i}^{S}(\mathcal{A})=\{z\in \mathbb{C}:||z-a_{i\cdots i}|-s_{ii}(\mathcal{A})|\leq r_{i}^{S}(\mathcal{A})\}, \forall i\in S,
\end{equation*}
\begin{equation*}
\begin{split}
  \mathcal{C}_{i,j}^{S}(\mathcal{A})=\{z\in \mathbb{C}:&[||z-a_{i\cdots i}|-s_{ii}(\mathcal{A})|-r_{i}^{S}(\mathcal{A})]
  [||z-a_{j\cdots j}|-s_{jj}(\mathcal{A})|-r_{j}^{\overline{S}}(\mathcal{A})]\\
  &\leq  r_{i}^{\overline{S}}(\mathcal{A})r_{j}^{S}(\mathcal{A})\}.
\end{split}
\end{equation*}
\end{theorem}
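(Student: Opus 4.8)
The plan is to follow the contradiction-by-localization template already used for Theorems \ref{MG}, \ref{DubleO}, and \ref{POst}. Fix any $\lambda \in \sigma(\mathcal{A})$ and form the shifted tensor $\mathcal{B} = (b_{i_1 i_2 \cdots i_m}) = \lambda\mathcal{I} - \mathcal{A}$. Since the shift alters only the diagonal entries (the off-diagonal entries merely change sign, leaving their moduli fixed), one immediately records $0 \in \sigma(\mathcal{B})$, $s_{ij}(\mathcal{A}) = s_{ij}(\mathcal{B})$ for all $i,j$, and consequently $r_i^S(\mathcal{A}) = r_i^S(\mathcal{B})$ and $r_i^{\overline{S}}(\mathcal{A}) = r_i^{\overline{S}}(\mathcal{B})$ for every $i$, together with $|\lambda - a_{i\cdots i}| = |b_{i\cdots i}|$. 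These invariances are the backbone that lets every quantity appearing in the sets $\Gamma_i^S$ and $\mathcal{C}_{i,j}^S$ be rewritten in terms of $\mathcal{B}$.

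Next I would argue by contradiction, assuming $\lambda \notin \mathcal{C}^S(\mathcal{A})$. Because $\mathcal{C}^S(\mathcal{A})$ is a union over $i \in S$ of the disks $\Gamma_i^S(\mathcal{A})$ and over $(i,j) \in S \times \overline{S}$ of the sets $\mathcal{C}_{i,j}^S(\mathcal{A})$, this forces $\lambda$ to lie outside every one of them, i.e. the strict reversals
$$||\lambda - a_{i\cdots i}| - s_{ii}(\mathcal{A})| > r_i^S(\mathcal{A}), \quad \forall i \in S,$$
and
$$[||\lambda - a_{i\cdots i}| - s_{ii}(\mathcal{A})| - r_i^S(\mathcal{A})]\,[||\lambda - a_{j\cdots j}| - s_{jj}(\mathcal{A})| - r_j^{\overline{S}}(\mathcal{A})] > r_i^{\overline{S}}(\mathcal{A})\,r_j^S(\mathcal{A})$$
for all $i \in S$, $j \in \overline{S}$. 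Using the invariances above, these become identical statements in $\mathcal{B}$. The crucial identification is that the diagonal entry of the tensor-generated matrix $B$ of $\mathcal{B}$ equals $|b_{i\cdots i}| - s_{ii}(\mathcal{B})$ by Definition \ref{TGM}, so its modulus $|b_{ii}|$ is precisely $||b_{i\cdots i}| - s_{ii}(\mathcal{B})|$; matching this against Definition \ref{de4.4.1} shows the two displayed inequalities are exactly the defining conditions for $B$ to be strictly $S$-diagonally dominant.

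Finally I would invoke the classical fact (underlying the matrix $S$-type inclusion theorem cited from \cite{Varga}) that a strictly $S$-diagonally dominant matrix is a nonsingular $H$-matrix, and then apply Theorem \ref{HH}: since the tensor-generated matrix $B$ is an $H$-matrix, $\mathcal{B}$ is an $H$-tensor, whence $0 \notin \sigma(\mathcal{B})$. This contradicts $0 \in \sigma(\mathcal{B})$ and forces $\lambda \in \mathcal{C}^S(\mathcal{A})$, completing the argument. I expect the only delicate point to be the bookkeeping of the outer absolute value $||\lambda - a_{i\cdots i}| - s_{ii}(\mathcal{A})|$, where one must confirm it is genuinely $|b_{ii}|$ rather than the unsigned difference, together with the observation that strict $S$-diagonal dominance, although not listed verbatim in Lemma \ref{Hmat}, still belongs to the family of $H$-matrix sufficient conditions behind the matrix $S$-type inclusion theorem, so that Theorem \ref{HH} applies unchanged.
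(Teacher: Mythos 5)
Your proposal is correct and follows essentially the same route as the paper's own proof: shift to $\mathcal{B}=\lambda\mathcal{I}-\mathcal{A}$, use the invariance of $s_{ij}$, $r_i^S$, $r_i^{\overline{S}}$ under the shift, negate membership in $\mathcal{C}^S(\mathcal{A})$ to show the tensor-generated matrix $B$ is strictly $S$-diagonally dominant (Definition \ref{de4.4.1}), invoke Theorem \ref{HH} to conclude $\mathcal{B}$ is an $H$-tensor, and contradict $0\in\sigma(\mathcal{B})$. The two "delicate points" you flag (the outer absolute value and the fact that strict $S$-diagonal dominance implies the $H$-matrix property even though it is not listed in Lemma \ref{Hmat}) are handled in the paper exactly as you anticipate.
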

\begin{proof}
Let $\mathcal{B}=(b_{i_1 i_2\cdots i_m})=\lambda\mathcal{I}-\mathcal{A}$, for any $\lambda \in \sigma(\mathcal{A})$, then
\begin{equation*}
  0\in \sigma(\mathcal{B}),\quad  r_{i}(\mathcal{A})=r_{i}(\mathcal{B}),\quad  s_{ij}(\mathcal{A})=s_{ij}(\mathcal{B}), \quad \forall i,j\in N,
\end{equation*}
and
\begin{equation*}
  r_{i}^{S}(\mathcal{A})=r_{i}^{S}(\mathcal{B}), r_{i}^{\overline{S}}(\mathcal{A})=r_{i}^{\overline{S}}(\mathcal{B}),\quad \forall i\in N.
\end{equation*}
We assume that $\lambda\notin \mathcal{C}^{S}(\mathcal{A})$, which shows that $\lambda\notin \bigg(\bigcup_{i\in S}\Gamma_{i}^{S}(\mathcal{A})\bigg)\bigcup
  \bigg(\bigcup_{i\in S,j\in \overline{S}}\mathcal{C}_{i,j}^{S}(\mathcal{A})\bigg), \forall i\in N $. In other words, we have
\begin{equation*}
\begin{cases}
||\lambda-a_{i\cdots i}|-s_{ii}(\mathcal{A})|> r_{i}^{S}(\mathcal{A}),\\
[||\lambda-a_{i\cdots i}|-s_{ii}(\mathcal{A})|-r_{i}^{S}(\mathcal{A})]
[||\lambda-a_{j\cdots j}|-s_{jj}(\mathcal{A})|-r_{j}^{\overline{S}}(\mathcal{A})]>r_{i}^{\overline{S}}(\mathcal{A})r_{j}^{S}(\mathcal{A}),
\end{cases}
\end{equation*}
equivalently,
\begin{equation*}
\begin{cases}
||b_{i\cdots i}|-s_{ii}(\mathcal{B})|> r_{i}^{S}(\mathcal{B}),\\
[||b_{i\cdots i}|-s_{ii}(\mathcal{B})|-r_{i}^{S}(\mathcal{B})]
[||b_{j\cdots j}|-s_{jj}(\mathcal{B})|-r_{j}^{\overline{S}}(\mathcal{B})]>r_{i}^{\overline{S}}(\mathcal{B})r_{j}^{S}(\mathcal{B}),
\end{cases}
\end{equation*}
From Definition \ref{TGM} and Definition \ref{de4.4.1}, the tensor-generated matrix $B$ of tensor $\mathcal{B}\in \mathbb{C}^{[m\times n]}$ is $S$-strictly diagonally dominant matrix. Hence, $\mathcal{B}$ is a $H$-tensor based on Theorem \ref{HH}. Furthermore, it is result in $0\notin \sigma(\mathcal{B})$ , which is contradicted with $0\in \sigma(\mathcal{B})$. The proof is completed.
\end{proof}

When $S=\{i\}$ for $i\in N$, then Theorem \ref{thm4.4.7} has the following case.
\begin{corollary}\label{cor4.4.2}
Let $\mathcal{A}=(a_{i_1i_2\cdots i_m}) \in \mathbb{C}^{[m\times n]}$. Then
\begin{equation*}
  \sigma(\mathcal{A})\subseteq \mathcal{C}^{S^{i}}(\mathcal{A})=\bigcup_{i\in S,j\in \overline{S}}\mathcal{C}_{i,j}^{S^{i}}(\mathcal{A})
\end{equation*}
where
\begin{equation*}
  \mathcal{C}_{i,j}^{S^{i}}(\mathcal{A})=\{z\in \mathbb{C}:||z-a_{i\cdots i}|-s_{ii}(\mathcal{A})|
  \big(||z-a_{j\cdots j}|-s_{jj}(\mathcal{A})|-P_{j}(\mathcal{A})+s_{ji}(\mathcal{A})\big)
  \leq  P_{i}(\mathcal{A})s_{ji}(\mathcal{A})\}.
\end{equation*}
\end{corollary}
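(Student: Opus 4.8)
The plan is to obtain this statement as the direct specialization of Theorem \ref{thm4.4.7} to the singleton choice $S = S^{i} = \{i\}$, performed once for each $i \in N$. Since Theorem \ref{thm4.4.7} already establishes $\sigma(\mathcal{A}) \subseteq \mathcal{C}^{S}(\mathcal{A})$ for every nonempty proper subset $S \subset N$, no new contradiction argument via the tensor-generated matrix is needed; each admissible choice of $S$ furnishes a valid inclusion, and the work here is purely in evaluating the partitioned deleted sums $r_{i}^{S}(\mathcal{A})$, $r_{i}^{\overline{S}}(\mathcal{A})$, $r_{j}^{S}(\mathcal{A})$, $r_{j}^{\overline{S}}(\mathcal{A})$ when $S$ collapses to one index, and then checking that the degenerate Ger$\breve{s}$gorin-type piece $\Gamma_{i}^{S}$ is absorbed.

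First I would record the four sums for $S=\{i\}$. Because $S\setminus\{i\}=\emptyset$, the definition gives $r_{i}^{S}(\mathcal{A})=0$ and hence $r_{i}^{\overline{S}}(\mathcal{A})=P_{i}(\mathcal{A})-r_{i}^{S}(\mathcal{A})=P_{i}(\mathcal{A})$. For a complementary index $j\in\overline{S}=N\setminus\{i\}$, the only surviving summand in $r_{j}^{S}(\mathcal{A})=\sum_{k\in S\setminus\{j\}}s_{jk}(\mathcal{A})$ is $k=i$, so $r_{j}^{S}(\mathcal{A})=s_{ji}(\mathcal{A})$ and consequently $r_{j}^{\overline{S}}(\mathcal{A})=P_{j}(\mathcal{A})-s_{ji}(\mathcal{A})$. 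All quantities $s_{\cdot\cdot}(\mathcal{A})$ are nonnegative, being sums of moduli, so the modulus bars in the definitions are inert here.

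Substituting these values into the two building blocks of Theorem \ref{thm4.4.7} then reproduces the asserted forms verbatim: $\Gamma_{i}^{S}(\mathcal{A})$ degenerates into $\{z:\,||z-a_{i\cdots i}|-s_{ii}(\mathcal{A})|\le 0\}$, while $\mathcal{C}_{i,j}^{S}(\mathcal{A})$ becomes the product inequality $||z-a_{i\cdots i}|-s_{ii}(\mathcal{A})|\,\big(||z-a_{j\cdots j}|-s_{jj}(\mathcal{A})|-P_{j}(\mathcal{A})+s_{ji}(\mathcal{A})\big)\le P_{i}(\mathcal{A})\,s_{ji}(\mathcal{A})$, where the right-hand side is exactly $r_{i}^{\overline{S}}(\mathcal{A})\,r_{j}^{S}(\mathcal{A})$ and the factor $-P_{j}(\mathcal{A})+s_{ji}(\mathcal{A})$ is $-r_{j}^{\overline{S}}(\mathcal{A})$; this is precisely $\mathcal{C}_{i,j}^{S^{i}}(\mathcal{A})$.

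The one genuinely content-bearing step, and the place I would concentrate, is justifying why the $\Gamma_{i}^{S}$ union drops out of $\mathcal{C}^{S}$. For $S=\{i\}$ the set $\Gamma_{i}^{S}(\mathcal{A})$ forces $||z-a_{i\cdots i}|-s_{ii}(\mathcal{A})|=0$, so the left factor of the $\mathcal{C}_{i,j}^{S^{i}}$ inequality vanishes at any such $z$; since the right side $P_{i}(\mathcal{A})\,s_{ji}(\mathcal{A})\ge 0$, every point of $\Gamma_{i}^{S}(\mathcal{A})$ already lies in $\mathcal{C}_{i,j}^{S^{i}}(\mathcal{A})$ for any $j\in\overline{S}$. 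This is exactly where I must invoke $n\ge 2$, which guarantees $\overline{S}\neq\emptyset$ so that at least one such $j$ exists. Hence $\bigcup_{i\in S}\Gamma_{i}^{S}(\mathcal{A})\subseteq\bigcup_{i\in S,\,j\in\overline{S}}\mathcal{C}_{i,j}^{S^{i}}(\mathcal{A})$, the Ger$\breve{s}$gorin-type union is redundant, and the inclusion of Theorem \ref{thm4.4.7} reduces to $\sigma(\mathcal{A})\subseteq\bigcup_{j\in\overline{S}}\mathcal{C}_{i,j}^{S^{i}}(\mathcal{A})$ for each fixed $i$, which is the claimed statement.
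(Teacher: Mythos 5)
Your proposal is correct and follows the same route as the paper, which obtains the corollary simply by specializing Theorem \ref{thm4.4.7} to $S=\{i\}$; your computation of $r_{i}^{S}=0$, $r_{i}^{\overline{S}}=P_{i}(\mathcal{A})$, $r_{j}^{S}=s_{ji}(\mathcal{A})$, $r_{j}^{\overline{S}}=P_{j}(\mathcal{A})-s_{ji}(\mathcal{A})$ matches exactly. The only difference is that you additionally justify why the degenerate Ger\v{s}gorin piece $\Gamma_{i}^{S}$ is absorbed into the $\mathcal{C}_{i,j}^{S^{i}}$ sets, a detail the paper leaves implicit.
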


%
%
%

It is worth highlighting that our method is not only effective for the aforementioned matrix eigenvalue localization sets but also applicable to a wide range of matrix inclusion sets, including other Brauers-type inclusion sets and their modified versions \cite{Varga}, Cvetkovi$\acute{c}$-I(II) sets \cite{Ljiljana1,Ljiljana}, and related references.

Theorem \ref{thmdir} reveals the consistent relationship between the irreducibility of the tensor-generated matrix and the weak irreducibility of the original tensor. Consequently, when dealing with irreducible tensor-generated matrices, our method is also suitable for Brualdi-type inclusion sets \cite{Brualdi,Varga}. Although we will omit the derivation process here, similar principles are applied.

To summarize,  we transform the problem of tensor eigenvalue localization into matrix eigenvalue localization. This allows us to leverage the mature theory of matrix eigenvalue localization and apply it to the investigation of tensor eigenvalue localization.

%

\subsection{Numerical examples}
Finally, we provide numerical examples to demonstrate the validity and effectiveness of the tensor $H$-eigenvalue inclusion theorem derived in the above conclusions.

We analyze Example 1.1 from \cite{LI2014N}, demonstrating that Brauer's Ovals of Cassini sets, typically linked with matrix eigenvalues, do not directly apply to an inclusion theorem for higher-order tensor $H$-eigenvalues.

\begin{example}
Let $\mathcal{A} \in \mathbb{R}^{[4\times 2]}$, where the non-zero elements are as follows:
\begin{equation*}
  a_{1111}=7, a_{1112}=a_{1121}=a_{1211}=a_{2111}=-2;
\end{equation*}
\begin{equation*}
  a_{2222}=6, a_{2221}=a_{2212}=a_{2122}=a_{1222}=-1;
\end{equation*}
otherwise $a_{i_1i_2i_3 i_4}=0$.
\end{example}

For a 4 order 2 dimension tensor, we can relate the elements of this tensor to a $2\times 2$ tensor-generated matrix \eqref{matrix A} due to the connection between the tensor and the tensor-generated matrix. Hence, we obtain:
\begin{equation*}
  A=\left(
      \begin{array}{cc}
        |7|-|4| & 3 \\
        3 & |6|-|2| \\
      \end{array}
    \right).
\end{equation*}

According to Theorem \ref{DubleO}, there are
\begin{equation*}
 \sigma(\mathcal{A})\subseteq \mathcal{\bar{O}}(\mathcal{A})=\{z\in \mathbb{C}: 0.4586\leq z \leq 12.8541\}.
\end{equation*}

In fact, the eigenvalues of tensor $\mathcal{A}=(a_{i_1i_2i_3 i_4})$ are
$\sigma(\mathcal{A})=\{12.7389, 0.4725\}.$
This demonstrates the accuracy of the adjusted ovality theorem for higher-order tensor eigenvalues.

\begin{example}
Consider $\mathcal{A}\in \mathbb{R}^{[4\times 4]}$ as a real tensor of order 4 and dimension 4 with specific element assignments:
\begin{equation*}
  a_{1111}=10, \quad a_{2222}=8, \quad a_{3333}=7, \quad a_{4444}=5,\quad a_{1333}=a_{1444}=1,
\end{equation*}
\begin{equation*}
    a_{1211}=a_{1113}=a_{1141}=1, \quad a_{1332}=a_{1442}=a_{1232}=a_{1234}=a_{1321}=a_{1214}=1,\quad a_{2333}=a_{2444}=1,
\end{equation*}
\begin{equation*}
   \quad a_{2112}=a_{2234}=a_{2113}=a_{2343}=a_{2123}=1, \quad a_{3222}=1, \quad a_{3111}=1, \quad a_{3121}=a_{3434}=a_{3123}=1,
\end{equation*}
\begin{equation*}
  a_{4222}=1, \quad a_{4111}=1, \quad a_{4121}=a_{4334}=1, \quad \text{and other entries are zero.}
\end{equation*}
\end{example}

The elements of $\mathcal{A}$ in $\mathbb{R}^{[4\times 4]}$ can be associated with a $4\times 4$ tensor-generated matrix \eqref{matrix A}, resulting in:
\begin{equation*}
  A=\left(
             \begin{array}{cccc}
              10-\frac{8}{3} & \frac{8}{3} & 3 & \frac{8}{3} \\
               \frac{5}{3} & 8-1 & \frac{8}{3} & \frac{5}{3} \\
               2 & \frac{5}{3} & 7-\frac{2}{3} & \frac{2}{3} \\
               \frac{5}{3} & \frac{4}{3} & \frac{2}{3} & 5-\frac{1}{3} \\
             \end{array}
           \right).
\end{equation*}

The numerical results from Theorem \ref{MG}, Theorem \ref{DubleO}, Corollary \ref{OP}, Theorem \ref{thm4.4.7}, and Corollary \ref{cor4.4.2}, as well as findings from \cite{Qi2005,LI2014N,LCQ2016S,RJZ2016}, are presented in Table 1. This illustrates that our eigenvalue localization set, encompassing Theorem \ref{MG}, Theorem \ref{DubleO}, Corollary \ref{OP}, Theorem \ref{thm4.4.7}, and Corollary \ref{cor4.4.2}, effectively captures all eigenvalues. Additionally, results from our methods in the mentioned examples reveal the enhanced precision of modified Brauer-type, Ostrowski, and $S$-type inclusion sets for tensor eigenvalues from the tensor-generated matrix compared to existing sets. Specifically, all the $H$-eigenvalues of the tensor $\mathcal{A}$ can be computed as follows:
$\sigma(\mathcal{A})=\{4.4858, 7.3107, 9.7718, 15.2641\}.$

\begin{table}[h]\label{table}
\caption{Lower and upper bounds of $\sigma(\mathcal{A})$.}
\begin{center}
\begin{tabular}{|c|c|c|}
  \hline
  \qquad Method \qquad& \qquad Lower bounds \qquad & \qquad Upper bounds \qquad\\
  \hline
  Theorem 6 in \cite{Qi2005} & -1 & 21 \\
  Theorem \ref{MG} & -1 & 21 \\
  Theorem 2.2 in \cite{LI2014N} & -0.7016 & 20.3739 \\
  Theorem 4 in \cite{LCQ2016S} & -0.4641 & 19.7823 \\
  Theorem 11 in \cite{RJZ2016} & -0.3723 & 19.6847 \\
  Theorem \ref{DubleO} & 0.0936 & 18.1382 \\
  Theorem \ref{POst}, $\gamma=0.5$ & 0.3849 & 19.3333 \\
  Theorem \ref{POst}, $\gamma=0.04$ &-0.2717 & 18.0961 \\
  Corollary \ref{OP},$\gamma=0.5$ & 0.3333  & 19.5000 \\
  Corollary \ref{OP},$\gamma=0.04$ & -0.2800 & 18.1200 \\
  Theorem \ref{thm4.4.7},$S=\{1,2\}$ & 0.5811  & 17.5803 \\
  Corollary \ref{cor4.4.2} & -0.4741 & 19.8130\\
  \hline
\end{tabular}
\end{center}
\end{table}

\section*{Conclusions}

Multipartite quantum scenarios serve as crucial and complex resources within the domain of quantum information science. Tensors establish a robust framework for the representation of multipartite quantum systems. This study introduces the novel concept of tensor-generated matrices, which define the relationships between an $m$-th order $n$-dimensional tensor and an $n$-dimensional square matrix. By establishing these connections, we illustrate that designating the tensor-generated matrix as an $H$-matrix implies the classification of the original tensor as an $H$-tensor. Additionally, we explore various analogous properties shared by both the original tensor and the tensor-generated matrix, encompassing attributes like weak irreducibility, weakly chained diagonal dominance, and (strong) symmetry. These discoveries offer a methodology to convert intricate tensor issues into matrix formulations within specific contexts, a particularly pertinent approach given the inherent NP-hard complexity of most tensor problems.

Subsequently, we delve into the utilization of tensor-generated matrices to analyze the classicality of spin states. Capitalizing on the tensor representation, we introduce classicality criteria tailored for (strongly) symmetric spin-$j$ states, potentially introducing novel perspectives into the examination of multipartite quantum resources. Furthermore, we extend classical matrix eigenvalue inclusion sets to higher-order tensor $H$-eigenvalues, a challenge often encountered with higher-order tensors. Consequently, we propose representative tensor $H$-eigenvalue inclusion sets, such as modified versions of Brauer's Ovals of Cassini sets, Ostrowski sets, and $S$-type inclusion sets.

\section*{Data availibility}
All data generated or analysed during this study are included in this published article.

\section*{Declarations}
Conflict of interest The authors declare that they have no known competing financial interests or personal
relationships that could have appeared to influence the work reported in this paper.

\bibliographystyle{elsarticle-num}
  \bibliography{H-tensor}

\begin{thebibliography}{10}
\expandafter\ifx\csname url\endcsname\relax
  \def\url#1{\texttt{#1}}\fi
\expandafter\ifx\csname urlprefix\endcsname\relax\def\urlprefix{URL }\fi
\expandafter\ifx\csname href\endcsname\relax
  \def\href#1#2{#2} \def\path#1{#1}\fi

\bibitem{CYN2017L1}
Y.~Chen, L.~Qi, E.~G. Virga, Octupolar tensors for liquid crystals, J. Phys. A:
  Math. Theor. 51~(2) (2018) 025206.

\bibitem{liquid2}
G.~Gaeta, E.~G. Virga, Octupolar order in three dimensions, Eur. Phys. J. E.
  39~(11) (2016) 113.

\bibitem{liquid3}
E.~G. Virga, Octupolar order in two dimensions, Eur. Phys. J. E. 38~(6) (2015)
  63.

\bibitem{East}
L.~Xiong, J.~Liu, A new {C}-eigenvalue localisation set for piezoelectric-type
  tensors, East Asian J. Appl. Math. 10~(1) (2020) 123--134.

\bibitem{Markov2}
W.~Li, M.~K. Ng, On the limiting probability distribution of a transition
  probability tensor, Linear. Multilinear. Algebra. 62~(3) (2014) 362--385.

\bibitem{data1}
L.~Lathauwer, B.~Moor, J.~Vandewalle, On the best rank-1 and rank-({R}1,{R}2, .
  . . ,{RN}) approximation of higher-order tensors, SIAM.J.Matrix.Anal.A. 21
  (2000) 1324--1342.

\bibitem{data2}
L.~Qi, W.~Sun, Y.~Wang, Numerical multilinear.algebra.appl., Front. Math. China
  2 (2007) 501--526.

\bibitem{KE2016}
R.~Ke, W.~Li, M.~K. Ng, Numerical ranges of tensors, Linear.Algebra.Appl. 508
  (2016) 100--132.

\bibitem{GME2003}
T.-C. Wei, P.~M. Goldbart, Geometric measure of entanglement and applications
  to bipartite and multipartite quantum states, Phys.Rev.A. 68 (2003) 042307.

\bibitem{hu2016}
S.~Hu, L.~Qi, G.~Zhang, Computing the geometric measure of entanglement of
  multipartite pure states by means of non-negative tensors, Phys.Rev.A. 93~(1)
  (2016) 012304.

\bibitem{quantuminformation}
M.~A. Nielsen, I.~L. Chuang, Quantum Computing and Quantum Information,
  Cambridge University Press: Cambridge, 2000.

\bibitem{COAM}
L.~Xiong, J.~Liu, {Z}-eigenvalue inclusion theorem of tensors and the geometric
  measure of entanglement ofmultipartite pure states, Comput. Appl. Math.
  39~(135) (2020).

\bibitem{ACAP}
L.~Xiong, J.~Liu, Further results for {Z}-eigenvalue localization theorem for
  higher-order tensors and their applications, Acta Appl. Math. 170~(11) (2020)
  229--264.

\bibitem{Niq2008}
Q.~Ni, L.~Qi, F.~Wang, An eigenvalue method for testing positive definiteness
  of a multivariate form, IEEE Trans. Autom. Control. 53~(5) (2008) 1096--1107.

\bibitem{JMI}
J.~Liu, L.~Xiong, Exponential type locally generalized strictly double
  diagonally tensors and eigenvalue localization, J. Math. Inequal. 14~(3)
  (2020) 611--629.

\bibitem{PhysRevLett.88.127902}
N.~J. Cerf, M.~Bourennane, A.~Karlsson, N.~Gisin, Security of quantum key
  distribution using $\mathit{d}$-level systems, Phys.Rev.Lett. 88 (2002)
  127902.

\bibitem{PhysRevA.82.030301}
L.~Sheridan, V.~Scarani, Security proof for quantum key distribution using
  qudit systems, Phys.Rev.A. 82 (2010) 030301.

\bibitem{PhysRevLett.83.5162}
D.~S. Abrams, S.~Lloyd, Quantum algorithm providing exponential speed increase
  for finding eigenvalues and eigenvectors, Phys.Rev.Lett. 83 (1999)
  5162--5165.

\bibitem{RevModPhys.80.1061}
A.~Das, B.~K. Chakrabarti, Colloquium: Quantum annealing and analog quantum
  computation, Rev.Mod. Phys. 80 (2008) 1061--1081.

\bibitem{chen2010tensor}
L.~Chen, E.~Chitambar, R.~Duan, Z.~Ji, A.~Winter, Tensor rank and stochastic
  entanglement catalysis for multipartite pure states, Phys.Rev.Lett. 105~(20)
  (2010) 200501.

\bibitem{chitambar2008tripartite}
E.~Chitambar, R.~Duan, Y.~Shi, Tripartite entanglement transformations and
  tensor rank, Phys.Rev.Lett. 101~(14) (2008) 140502.

\bibitem{bruzda2024rank}
W.~Bruzda, S.~Friedland, K.~{\.Z}yczkowski, Rank of a tensor and quantum
  entanglement, Linear. Multilinear. Algebra. 72~(11) (2024) 1796--1859.

\bibitem{xiong2022cauchy}
L.~Xiong, Y.~Wu, J.~Liu, Z.~Jiang, Q.~Qin, Cauchy tensor and the classicality
  and separability condition of spin states, Results Phys. 40 (2022) 105824.

\bibitem{xiong2022geometric}
L.~Xiong, J.~Liu, Q.~Qin, The geometric measure of entanglement of multipartite
  states and the {Z}-eigenvalue of tensors, Quantum Inf. Process. 21~(3) (2022)
  102.

\bibitem{PRXQuantum.2.030305}
J.~Lee, D.~W. Berry, C.~Gidney, W.~J. Huggins, J.~R. McClean, N.~Wiebe,
  R.~Babbush, Even more efficient quantum computations of chemistry through
  tensor hypercontraction, PRX Quantum 2 (2021) 030305.

\bibitem{2014T}
O.~Giraud, D.~Braun, D.~Baguette, T.~Bastin, J.~Martin, Tensor representation
  of spin states, Phys.Rev.Lett. 114 (2015) 080401.

\bibitem{PhysRevA.94.042324}
F.~Bohnet-Waldraff, D.~Braun, O.~Giraud, Tensor eigenvalues and entanglement of
  symmetric states, Phys.Rev.A. 94 (2016) 042324.

\bibitem{RevModPhys.89.041003}
A.~Streltsov, G.~Adesso, M.~B. Plenio, Colloquium: Quantum coherence as a
  resource, Rev.Mod. Phys. 89 (2017) 041003.

\bibitem{optics}
E.~C.~G. Sudarshan, Equivalence of semiclassical and quantum mechanical
  descriptions of statistical light beams, Phys.Rev.Lett. 10 (1963) 277--279.

\bibitem{optics2}
R.~J. Glauber, Coherent and incoherent states of the radiation field,
  Phys.Rev.Lett. 131 (1963) 2766--2788.

\bibitem{gao2015coherent}
W.~Gao, A.~Imamoglu, H.~Bernien, R.~Hanson, Coherent manipulation, measurement
  and entanglement of individual solid-state spins using optical fields,
  Nat.Photonics 9~(6) (2015) 363--373.

\bibitem{PhysRevLett.111.250404}
F.~G. S.~L. Brand\~ao, M.~Horodecki, J.~Oppenheim, J.~M. Renes, R.~W. Spekkens,
  Resource theory of quantum states out of thermal equilibrium, Phys. Rev.
  Lett. 111 (2013) 250404.

\bibitem{PhysRevLett.113.140401}
T.~Baumgratz, M.~Cramer, M.~B. Plenio, Quantifying coherence, Phys.Rev.Lett.
  113 (2014) 140401.

\bibitem{PhysRevLett.116.150502}
C.~Napoli, T.~R. Bromley, M.~Cianciaruso, M.~Piani, N.~Johnston, G.~Adesso,
  Robustness of coherence: An operational and observable measure of quantum
  coherence, Phys.Rev.Lett. 116 (2016) 150502.

\bibitem{PhysRevA.98.022328}
N.~Johnston, C.-K. Li, S.~Plosker, Y.-T. Poon, B.~Regula, Evaluating the
  robustness of $k$-coherence and $k$-entanglement, Phys.Rev.A. 98 (2018)
  022328.

\bibitem{coherent}
A.~M. Perelomov, Coherent states for arbitrary lie group., Commun. Math. Phys.
  26 (1972) 222--236.

\bibitem{1986Generalized}
A.~Perelomov, Generalized Coherent States and Their Applications,
  Springer-Verlag, Berlin, 1986.

\bibitem{Classicality2008}
O.~Giraud, P.~Braun, D.~Braun, Classicality of spin states, Phys.Rev.A. 78
  (2008) 042112.

\bibitem{PhysRevA.96.032312}
F.~Bohnet-Waldraff, D.~Braun, O.~Giraud, Entanglement and the truncated moment
  problem, Phys.Rev.A. 96 (2017) 032312.

\bibitem{quantumness2010}
G.~Olivier, B.~Petr, B.~Daniel, Quantifying quantumness and the quest for
  queens of quantum, New J. Phys. 12~(6) (2010) 063005.

\bibitem{quantumness2016}
F.~Bohnet-Waldraff, D.~Braun, O.~Giraud, Quantumness of spin-1 states,
  Phys.Rev.A. 93 (2016) 012104.

\bibitem{2010H}
J.~Martin, O.~Giraud, P.~A. Braun, D.~Braun, T.~Bastin, Multiqubit symmetric
  states with high geometric entanglement, Phys.Rev.A. 81 (2010) 062347.

\bibitem{Eisert_2003}
J.~Eisert, K.~Audenaert, M.~B. Plenio, Remarks on entanglement measures and
  non-local state distinguishability, J. Phys. A: Math. Gen. 36~(20) (2003)
  5605--5615.

\bibitem{hillar2013most}
C.~J. Hillar, L.-H. Lim, Most tensor problems are {NP}-hard, Journal of the ACM
  60~(6) (2013) 1--39.

\bibitem{Power2011}
T.~G. Kolda, J.~R. Mayo., Shifted power method for computing tensor eigenpairs,
  SIAM.J.Matrix.Anal.A. 32~(4) (2011) 1095--1124.

\bibitem{Power2014}
T.~G. Kolda, J.~R. Mayo., An adaptive shifted power method for computing
  generalized tensor eigenpairs, SIAM.J.Matrix.Anal.A. 35~(4) (2014)
  1563--1581.

\bibitem{Cui2014}
C.-F. Cui, Y.-H. Dai, J.~Nie, All real eigenvalues of symmetric tensors,
  SIAM.J.Matrix.Anal.A. 35~(4) (2014) 1582--1601.

\bibitem{Homotopy1}
L.~Chen, L.~Han, Lixing. an~Zhou, Computing tensor eigenvalues via homotopy
  methods, SIAM.J.Matrix.Anal.A. 37~(1) (2016) 290--319.

\bibitem{Homotopy2}
L.~Chen, L.~Han, H.~Yin, et~al., A homotopy method for computing the largest
  eigenvalue of an irreducible nonnegative tensor., J. Comput. Appl. Math.
  355~(1) (2019) 174--181.

\bibitem{BrauerA}
A.~Brauer, Limits for the characteristic roots of a matrix. {II}., Duke Math.
  J. 14~(1) (1947) 21--26.

\bibitem{OstrowskiI}
A.~Ostrowski., uber das nichtverschwinden einer klasse von determinanten und
  die lokalisierung der charakteristischen wurzeln von matrizen., Compositio
  Math. 9~(3) (1951) 209--226.

\bibitem{Qi2005}
L.~Qi, Eigenvalues of a real supersymmetric tensor, J. Symb. Comput. 40~(6)
  (2005) 1302--1324.

\bibitem{Lim}
{Lek-Heng Lim}, Singular values and eigenvalues of tensors: a variational
  approach, in: 1st IEEE International Workshop on Computational Advances in
  Multi-Sensor Adaptive Processing, 2005., 2005, pp. 129--132.

\bibitem{SJY2013}
J.-Y. Shao, A general product of tensors with applications,
  Linear.Algebra.Appl. 439~(8) (2013) 2350 -- 2366.

\bibitem{LI20141}
C.~Li, F.~Wang, J.~Zhao, Y.~Zhu, Y.~Li, Criterions for the positive
  definiteness of real supersymmetric tensors, J. Comput. Appl. Math. 255
  (2014) 1--14.

\bibitem{ZLP2014}
L.~Zhang, L.~Qi, G.~Zhou, {M}-tensors and some applications,
  SIAM.J.Matrix.Anal.A. 35~(2) (2014) 437--452.

\bibitem{horn2012matrix}
R.~A. Horn, C.~R. Johnson, Matrix analysis, Cambridge university press, 2012.

\bibitem{RevModPhysQE}
R.~Horodecki, P.~Horodecki, M.~Horodecki, K.~Horodecki, Quantum entanglement,
  Rev.Mod.Phys. 81 (2009) 865--942.

\bibitem{PhysRevA.94.042343}
F.~Bohnet-Waldraff, D.~Braun, O.~Giraud, Partial transpose criteria for
  symmetric states, Phys.Rev.A. 94 (2016) 042343.

\bibitem{Varga}
R.~Varga, Ger$\breve{s}$gorin and His Circles, Vol.~36, Springer, 2004.

\bibitem{KCC}
K.~Chang, K.~Pearson, T.~Zhang, Perron-frobenius theorem for nonnegative
  tensors, Commun. Math. Sci. 6~(2) (2008) 507--520.

\bibitem{HSL2014}
S.~Hu, Z.~Huang, L.~Qi, Strictly nonnegative tensors and nonnegative tensor
  partition, Sci. China Math. 57 (2014) 181--195.

\bibitem{MNA2015}
M.~R. Kannan, N.~Shaked-Monderer, A.~Berman, Some properties of strong
  {H}-tensors and general {H}-tensors, Linear.Algebra.Appl. 476 (2015) 42--55.

\bibitem{Parsiad1}
P.~Azimzadeh, E.~Bayraktar, High order bellman equations and weakly chained
  diagonally dominant tensors, SIAM.J.Matrix.Anal.A. 40~(1) (2019) 276--298.

\bibitem{xiong2023multipartite}
L.~Xiong, Q.~Qin, J.~Liu, Z.~Gong, Z.~Jiang, N.-s. Sze, Multipartite strongly
  symmetric states and applications to geometric entanglement and classicality,
  Quantum. Inf. Process. 22~(7) (2023) 291.

\bibitem{stronglysymmetric}
L.~Qi, C.~Xu, Y.~Xu, Nonnegative tensor factorization, completely positive
  tensors, and a hierarchical elimination algorithm, SIAM.J.Matrix.Anal.A.
  35~(4) (2014) 1227--1241.

\bibitem{LI2014N}
C.~Li, Y.~Li, X.~Kong, New eigenvalue inclusion sets for tensors, Numer.
  Linear. Algebra. Appl. 21~(1) (2014) 39--50.

\bibitem{Ljiljana1}
L.~Cvetkovi$\acute{c}$, V.~Kostic, R.~Varga, A new gersgorin-type eigenvalue
  inclusion set, Electron. Trans. Numer. Anal. 18 (2004) 73--80.

\bibitem{Ljiljana}
L.~Cvetkovi$\acute{c}$, H-matrix theory vs. eigenvalue localization, Numer.
  Algorithms. 42~(3-4) (2006) 229--245.

\bibitem{Brualdi}
B.~RA, Matrices eigenvalues, and directed graphs, Linear. Multilinear. Algebra.
  11 (1982) 143--165.

\bibitem{LCQ2016S}
C.~Li, A.~Jiao, Y.~Li, An {S}-type eigenvalue localization set for tensors,
  Linear.Algebra.Appl. 493 (2016) 469--483.

\bibitem{RJZ2016}
R.~Zhao, L.~Gao, Q.~Liu, Y.~Li, Criterions for identifying {H}-tensors, Front.
  Math. China 11 (2016) 661--678.

\end{thebibliography}

\end{document}